\documentclass[12pt,reqno]{amsart}

\usepackage{amsmath,amsthm,comment,fullpage}
\usepackage{upgreek}

\usepackage{times}
\usepackage[T1]{fontenc}
\usepackage{mathrsfs}
\usepackage{latexsym}
\usepackage[dvips]{graphics}
\usepackage{epsfig}

\usepackage{amsmath,amsfonts,amsthm,amssymb,amscd}
\usepackage{color}
\usepackage{hyperref}
\usepackage{breakurl}
\usepackage{url}

\newcommand{\bburl}[1]{\textcolor{blue}{\url{#1}}}
\def\legendre#1#2{\left(\frac{#1}{#2}\right)}
\def\cross{\times}

%\newcommand{\burl}[1]{\textcolor{blue}{\url{#1}}}

%\usepackage{showkeys}
%\DeclareGraphicsRule{.tif}{png}{.png}{`convert #1 `dirname #1`/`basename #1 .tif`.png}

%\newcommand{\burl}[1]{\textcolor{blue}{\url{#1}}}

\numberwithin{equation}{section}

\newtheorem{thm}{Theorem}[section]

\newtheorem{lem}[thm]{Lemma}
\newtheorem{prop}[thm]{Proposition}

\theoremstyle{plain}

\newtheorem{definition}[thm]{Definition}

\newtheorem{lemma}[thm]{Lemma}
\newtheorem{proposition}[thm]{Proposition}
\newtheorem{theorem}[thm]{Theorem}

\newtheorem{remark}[thm]{Remark}

%%%%%%%%%%%%%% Dirichlet characters

\newcommand\bal{\begin{align}}
\newcommand\eal{\end{align}}
\newcommand\be{\begin{equation}}
\newcommand\ee{\end{equation}}
\newcommand\bea{\begin{eqnarray}}
\newcommand\eea{\end{eqnarray}}
\newcommand\bi{\begin{itemize}}
\newcommand\ei{\end{itemize}}
\newcommand\ben{\begin{enumerate}}
\newcommand\een{\end{enumerate}}
\newcommand\bc{\begin{center}}
\newcommand\ec{\end{center}}
\newcommand\ba{\begin{array}}
\newcommand\ea{\end{array}}

%\newcommand\mod{\text{mod\ }}

% General Symbols

 %use in linux

%\newcommand{\umess}[2]{\underset{(#1)}{\underbrace{#2}}}

%Blackboard Letters
\newcommand{\R}{\ensuremath{\mathbb{R}}}

\newcommand{\Z}{\ensuremath{\mathbb{Z}}}
\newcommand{\Q}{\mathbb{Q}}
\newcommand{\N}{\mathbb{N}}
\newcommand{\F}{\mathbb{F}}

%Caligraphy Letters

  %use in linux
\newcommand\frakfamily{\usefont{U}{yfrak}{m}{n}}
\DeclareTextFontCommand{\textfrak}{\frakfamily}

% Fractions

  %oneforth
  %onehalf
  %onethird
    %1/pi
    %2/pi
 %1/2pi

% Theorem / Lemmas et cetera
%\theoremstyle{definition}

\newtheorem{rek}[thm]{Remark}

\newcommand{\ncr}[2]{{#1 \choose #2}}

%Formatting

\newcommand{\hr}[1]{\href{#1}{\url{#1}}}

%%% NEW COMMANDS FOR THIS PAPER

%%%%%%%%%STUDENT
\newcommand\nc{\newcommand}
\nc{\on}{\operatorname}
\nc{\Hom}{\on{Hom}}
\nc{\wt}{\widetilde}
\nc{\kernel}{\text{ker}}
\nc{\image}{\text{Im}}
\nc{\sls}{\subsetneq ... \subsetneq}
\nc{\ssn}{\subsetneq}
\nc{\bull}{$\bullet \, \,$}
\nc{\short}[3]{0 \longrightarrow #1 \longrightarrow #2 \longrightarrow #3 \longrightarrow 0}
\nc{\pd}[2]{\frac{\partial #1}{\partial #2}}

\newcommand{\js}[1]{{#1\overwithdelims () p}}

%%%%%%%%%%%%%%%%%%%%%%%%%%%%%%%%%%%%%%%%%%%%%%%%%%%%%%%%%%%%%%%%%%%%%%%%%%%%%%%%%%%%%%%%%%%%%%%%%%%%
%%%%%%%%%%%%%%%%%%%%%%%%%%%%%%%%%%%%%%%%%%%%%%%%%%%%%%%%%%%%%%%%%%%%%%%%%%%%%%%%%%%%%%%%%%%%%%%%%%%%
%%%%%%%%%%%%%%%%%%%%%%%%%%%%%%%%%%%%%%%%%%%%%%%%%%%%%%%%%%%%%%%%%%%%%%%%%%%%%%%%%%%%%%%%%%%%%%%%%%%%

\title{Lower-Order Biases in the Second Moment of Dirichlet Coefficients in Families of $L$-Functions}

\author{Megumi Asada}
\email{\textcolor{blue}{\href{mailto:maa2@williams.edu}{maa2@williams.edu}}}
\address{Department of Mathematics and Statistics, Williams College, Williamstown, MA 01267}

\author{Ryan C. Chen}
\email{\textcolor{blue}{\href{mailto:rcchen@mit.edu}{rcchen@mit.edu}}}
\address{Department of Mathematics, Massachussetts Institute of Technology, Cambridge, MA 02142}

\author{Eva Fourakis}
\email{\textcolor{blue}{\href{mailto:erf1@williams.edu}{erf1@williams.edu}}}
\address{Department of Mathematics and Statistics, Williams College, Williamstown, MA 01267}

\author{Yujin Hong Kim}
\email{\textcolor{blue}{\href{mailto:yujin.kim@cims.nyu.edu}{yujin.kim@cims.nyu.edu}}}
\address{Courant Institute of Mathematical Sciences, New York University, New York, NY 10012}

\author{Andrew Kwon}
\email{\textcolor{blue}{\href{mailto:akwon@andrew.cmu.edu}{akwon@andrew.cmu.edu}}}
\address{Department of Mathematics, Carnegie Mellon University, Pittsburgh, PA 15213}

\author{Jared Duker Lichtman}
\email{\textcolor{blue}{\href{mailto:jared.d.lichtman@gmail.com}{jared.d.lichtman@gmail.com}}}
\address{Department of Mathematics, Dartmouth College, Hanover, NH 03755}

\author{Blake Mackall}
\email{\textcolor{blue}{\href{mailto:bmackall60@gmail.com}{bmackall60@gmail.com}}}
\address{Department of Mathematics and Statistics, Williams College, Williamstown, MA 01267}

\author{Steven J. Miller}
\email{\textcolor{blue}{\href{mailto:sjm1@williams.edu}{sjm1@williams.edu}}}
\address{Department of Mathematics and Statistics, Williams College, Williamstown, MA 01267}

%\author{Christina Rapti}
%\email{\textcolor{blue}{\href{mailto:cr9060@bard.edu}{cr9060@bard.edu}}}
%\address{Department of Mathematics, Bard College, Annandale-on-Hudson, NY 12504}

%\author{Ezra Waxman}
%\email{\textcolor{blue}{\href{mailto:ezrawaxman@gmail.com}{ezrawaxman@gmail.com}}}
%\address{School of Mathematical Science, Tel Aviv University, Tel Aviv 6997801, Israel}

\author{Eric Winsor}
\email{\textcolor{blue}{\href{mailto:rcwnsr@umich.edu}{rcwnsr@umich.edu}}}
\address{Department of Mathematics, University of Michigan, Ann Arbor, MI 48109 }

\author{Karl Winsor}
\email{\textcolor{blue}{\href{mailto:krlwnsr@umich.edu}{krlwnsr@umich.edu}}}
\address{Department of Mathematics, University of Michigan, Ann Arbor, MI 48109 }

\author{Jianing Yang}
\email{\textcolor{blue}{\href{mailto:jyang@colby.edu}{jyang@colby.edu}}}
\address{Department of Mathematics, Colby College, Waterville, ME 04901 }

\author{Kevin Yang}
\email{\textcolor{blue}{\href{mailto:kyang95@stanford.edu}{kyang95@stanford.edu}}}
\address{Department of Mathematics, Harvard University, Cambridge, MA 02138 }

\thanks{This work was supported by NSF grants DMS 1347804, DMS1265673 and DMS1561945, Carnegie Mellon University, Princeton University, Williams College, the Eureka Program, the Finnerty Fund, and the Clare Boothe Luce Program of the Henry Luce Foundation. We thank the referee for numerous helpful comments, and Matija Kazalicki and Bartosz Naskrecki for sharing their preprints.}

\subjclass[2010]{60B10, 11B39, 11B05  (primary) 65Q30 (secondary)}
\keywords{Dirichlet characters, elliptic curves, cuspidal newforms, $L$-functions, lower order terms, excess rank}

\date{\today}

\begin{document}
\maketitle

\begin{abstract}
Let $\mathcal E: y^2 = x^3 + A(T)x + B(T)$ be a nontrivial one-parameter family of elliptic curves over $\mathbb{Q}(T)$, with $A(T), B(T) \in \mathbb Z(T)$, and consider the $k$\textsuperscript{th} moments $A_{k,\mathcal{E}}(p) := \sum_{t (p)} a_{\mathcal{E}_t}(p)^k$ of the Dirichlet coefficients
$a_{\mathcal{E}_t}(p) := p + 1 - |\mathcal{E}_t (\mathbb{F}_p)|$. Rosen and Silverman proved a conjecture of Nagao relating the first moment
$A_{1,\mathcal{E}}(p)$ to the rank of the family over $\mathbb{Q}(T)$, and Michel proved that if $j(T)$ is not constant then the second moment is
equal to $A_{2,\mathcal{E}}(p) = p^2 + O(p^{3/2})$. Cohomological arguments show that the lower order terms are of sizes $p^{3/2}, p, p^{1/2}$, and $1$. In every case we are able to analyze in closed form, the largest lower order term in the second moment expansion that does not average to zero is on average negative, though numerics suggest this may fail for families of moderate rank. We prove this Bias Conjecture for several large classes of families, including families with rank, complex multiplication, and constant $j(T)$-invariant. We also study the analogous Bias Conjecture for families of Dirichlet characters, holomorphic forms on GL$(2)/\mathbb{Q}$, and their symmetric powers and Rankin-Selberg convolutions. We identify all lower order terms in large classes of families, shedding light on the arithmetic objects controlling these terms. The negative bias in these lower order terms has implications toward the excess rank conjecture and the behavior of zeros near the central point in families of $L$-functions.
\end{abstract}

\tableofcontents

%%%%%%%%%%%%%%%%%%%%%%%%%%%%%%%%%%%%%%%%%%%%%%%%%%%%%%%%%%%%%%%%%%%%%%%%%%%%%%%%%%%%%%%%%%%%%%%%%%%%
%%%%%%%%%%%%%%%%%%%%%%%%%%%%%%%%%%%%%%%%%%%%%%%%%%%%%%%%%%%%%%%%%%%%%%%%%%%%%%%%%%%%%%%%%%%%%%%%%%%%
%%%%%%%%%%%%%%%%%%%%%%%%%%%%%%%%%%%%%%%%%%%%%%%%%%%%%%%%%%%%%%%%%%%%%%%%%%%%%%%%%%%%%%%%%%%%%%%%%%%%

\section{Introduction}

The genesis of this paper are some computations on moments of the Dirichlet coefficients of the $L$-functions of elliptic curves by Miller in his thesis \cite{Mi1, Mi2} and expanded in \cite{Mi3}. The main purpose of that work was to verify the Katz-Sarnak Density Conjecture \cite{KaSa1, KaSa2} for families of elliptic curves; in other words, that in the limit as the conductors tend to infinity the behavior of zeros near the central point in families of elliptic curve $L$-functions agree with the scaling limits of eigenvalues near 1 of orthogonal groups. In that and related work on numerous other families of $L$-functions (see \cite{MMRT-BW} for an extensive discussion and survey of the literature  and an expanded version of the argument below), the main term of number theory and random matrix theory agree; the lower order terms on the two sides, however, often differ, and it is in these lower order terms that the arithmetic of the family emerges. This is similar to the central limit theorem, which we expand on later.

%where there is universality in the sample mean of independent random variables drawn from a nice distribution converging to a Gaussian, independent of the shape of the distribution; the shape surfaces in the rate of convergence to the Gaussian through lower order terms that depend on the higher moments of the distribution.

In all families studied to date which can be computed in closed form a bias has been observed in these lower order terms; the purpose of this work is to describe these results, extensively investigate this phenomenon in other families, discuss the implications such a bias has in number theory, and hopefully motivate others to provide a theoretical understanding for this phenomenon (explicitly, as there is a cohomological interpretation of these elliptic curve moments, could such arguments give a proof for the bias?). Two of the most important applications are to the distribution of ranks in families, and lower order terms in the $n$-level densities; in particular, the presence of a negative bias leads to a slight increase in bounds on the average rank in families, as well as surfacing in lower order terms of the $n$-level densities of zeros of $L$-functions. The latter is particularly important, as the main term of these statistics agree with those from random matrix theory; the arithmetic of the family is absent in the main term but arises in the lower order terms,  and breaks the universality of behavior. Thus we first describe the general framework (see \cite{IK} for proofs and additional details), then describe the families studied and state our results.

%%%%%%%%%%%%%%%%%%%%%%%%%%%%%%%%%%%%%%%%%%%%%%%%%%%%%%%%%%%%%
%%%%%%%%%%%%%%%%%%%%%%%%%%%%%%%%%%%%%%%%%%%%%%%%%%%%%%%%%%%%%
\subsection{Bias in Families of \texorpdfstring{$L$}{L}-Functions}

We quickly review some needed properties of $L$-functions; see for example \cite{IK,RudSa}. Let $\pi$ be a cuspidal automorphic representation on ${\rm GL}_n$, and let $Q_\pi>0$ be the analytic conductor of the associated $L$-function \be L(s,\pi)\ =\ \sum_{m=1}^\infty \frac{\lambda_\pi(m)}{m^s}, \ \ \ \frac{L'(s,\pi)}{L(s,\pi)} \ = \ -\sum_{m=1}^\infty \frac{\Lambda(m) a_\pi(m)}{m^s}, \ee where $\Lambda(m) = \log p$ if $m = p^k$ for a prime $p$, and 0 otherwise. Assuming the Generalized Riemann Hypothesis (GRH) the non-trivial zeros are of the form $1/2 + i \gamma_{\pi,j}$ with $\gamma_{\pi,j}$  real. Let $\{\alpha_{\pi,i}(p)\}_{i=1}^n$ be the Satake parameters of $L(s,\pi)$. We have the Euler product \be L(s,\pi)\ =\ \sum_{m=1}^\infty \frac{\lambda_\pi(m)}{m^s}\ =\ \prod_p \prod_{i=1}^n \left(1 -\alpha_{\pi,i}(p) p^{-s}\right)^{-1}, \ee and \be a_\pi(p^\nu) \ = \ \sum_{i=1}^n \alpha_{\pi,i}(p)^\nu; \ee thus the $p^\nu$\textsuperscript{{\rm th}} coefficient of $-L'(s,\pi)/L(s,\pi)$ is the $\nu$\textsuperscript{{\rm th}} moment of the Satake parameters (up to a factor of $\log p$). The explicit formula (see for example \cite{ILS,RudSa}), applied to a given $L(s,\pi)$ and then averaged over a finite family  $\mathcal{F}_N$, yields the 1-level density \begin{eqnarray}
  D_{1,\mathcal{F}_N}(\phi) & \ := \ & \frac1{|\mathcal{F}_N|} \sum_{\pi\in\mathcal{F}_N}\sum_j \phi\left(\gamma_{\pi,j}\frac{\log Q_\pi}{2\pi}\right)   \nonumber\\ & \ =\ &   \widehat{\phi}(0) -   2 \frac1{|\mathcal{F}_N|} \sum_{\pi\in\mathcal{F}_N}\sum_{p} \sum_{\nu = 1}^\infty\widehat{\phi}\left(\frac{\nu\log p}{\log Q_\pi}\right)  \frac{a_\pi(p^\nu)\log p}{p^{\nu/2}\log Q_\pi} \nonumber\\ & & \ \ \ + \ O\left(\frac{\log\log Q_\pi}{\log Q_\pi}\right),
\end{eqnarray} where $\phi$ is an even Schwarz test function with compactly supported Fourier transform $\widehat{\phi}$ and $N$ is some parameter such that as $N$ increases, the analytic conductors increase.\footnote{Note the 1-level density is well-defined even if GRH fails, though if there are zeros off the line then we lose the spectral interpretation of the zeros. If we adjust the rescaling of the zeros slightly we can remove the big-Oh error term, but its presence does not matter for calculating the main term, and is only important when we look at the 2 or higher level densities.} The Katz-Sarnak Density Conjecture states that as $N\to\infty$ the 1-level density converges to that of a classical compact group. This has been verified for many families for test functions whose Fourier transforms have suitably restricted support; see \cite{MMRT-BW} for a list of many of these families, as well as a summary of the techniques used in the proofs.

In many situations, such as families of Dirichlet characters or cuspidal newforms of a given level and weight, the analytic conductors in our family are constant; thus $Q_\pi = Q_N$ say. For other families such as elliptic curves this fails, and one must either do sieving and additional work, or instead normalize by the average log-conductor; while this is satisfactory for the 1-level density it introduces problems for the general $n$-level density (see \cite{Mi1, Mi2} for details and a resolution). Thus in calculating the 1-level density we can often push the sum over our family $\mathcal{F}_N$ through the test function and reduce the analysis to averages of the moments of the Satake parameters. In all families studied to date we have sufficient decay in the $\lambda_\pi(p^\nu)$'s so that the sum over primes with $\nu \ge 3$ converges; this is known for many families, and follows from the Ramanujan conjectures in general.\footnote{The Satake parameters $|\alpha_{\pi,i}|$ are bounded by $p^\delta$ for some $\delta$; conjecturally $\delta = 0$. There has been significant progress towards these bounds with some $\delta < 1/2$; see \cite{K,KSa}. Any $\delta < 1/6$ implies the $\nu \ge 3$ terms do not contribute to the main term.} Thus determining the 1-level density, up to lower order terms, is equivalent to analyzing the $N\to\infty$ limits of \bea\label{eq:S1S2} S_1(\mathcal{F}_N) & \ := \  & -2 \sum_{p}
\widehat \phi\left(\ \frac{\log p}{\log Q_N}\right) \frac{\log p}{\sqrt{p}
\log Q_N}\ \left[\frac{1}{|\mathcal{F}_N|} \sum_{\pi \in \mathcal{F}_N}
a_\pi(p)\ \right]
\nonumber\\ S_2(\mathcal{F}_N) & \ := \ & -2 \sum_{p}\widehat \phi\left(2\frac{\log p}{\log
Q_N}\right)\ \frac{\log p}{p \log Q_N}\ \ \left[
\frac{1}{|\mathcal{F}_N|} \sum_{\pi \in \mathcal{F}_N} a_\pi(p^2)\right].
\eea As
\be a_\pi(p^\nu) \ = \ \alpha_{\pi,1}(p)^\nu + \cdots
+ \alpha_{\pi,n}(p)^\nu, \ee we see that only the first two moments of the Satake parameters enter the calculation. The sum over the remaining powers, \be S_\nu(\mathcal{F}_N)  \ := \  -2  \sum_{\nu = 3}^\infty \sum_p \widehat \phi\left(\nu\frac{\log p}{\log Q_N}\right)\ \frac{\log p}{p^{\nu/2} \log Q_N}\ \ \left[
\frac{1}{|\mathcal{F}_N|} \sum_{\pi \in \mathcal{F}_N} a_\pi(p^\nu)\right], \ee is $O(1/\log Q_N)$ under the Ramanujan Conjectures. Thus the main term of the limiting behavior is controlled by the main terms of the first two moments of the Satake parameters (see Remark \ref{rek:firsttwoterms} for more on this); the higher moments (and the lower order terms in the first two moments) affect the rate of convergence to the random matrix theory limits. The goal of this work is to explore these lower order terms in a variety of families, and analyze the consequences for number theory.

As remarked above, the non-universality in the lower order terms is similar to that of the Central Limit Theorem. Given any nice density, one can renormalize it to have mean zero and variance one. The universality of the Central Limit Theorem is due to the fact that the higher moments of the density, which is where the shape emerges, only surface as lower order terms in the analysis, affecting only the \emph{rate} of convergence to the Gaussian. The situation is thus very similar in families of $L$-functions, where both the third and higher moments of the Satake parameters, as well as lower order terms in the first and second moments, break universality and lead to lower order terms where arithmetic lives. These arguments show the importance of understanding the lower order terms, and is the motivation for this paper.

\begin{rek}\label{rek:firsttwoterms} We briefly comment on the first two moments, i.e., the sums in \eqref{eq:S1S2}. For test functions with small support, $S_1(\mathcal{F}_N)$ contributes zero to the main term in all families investigated to date save for families of elliptic curves with rank $r$, where the main term of its contribution is $-r$ (if the support is large it can contribute; see \cite{ILS} and the family of cuspidal newforms when the support exceeds $[-1, 1]]$). Additionally, $S_2(\mathcal{F}_N)$ contributes $-c_{\mathcal{F}} \phi(0)/2$, and the family of $L$-functions has unitary, symplectic or orthogonal symmetry depending on whether or not the \emph{family symmetry constant} $c_{\mathcal{F}}$ equals 0, 1 or -1. Further, in many cases it can be shown the symmetry of the Rankin-Selberg convolution of two families, $c_{\mathcal{F}\times\mathcal{G}}$, equals the product of the symmetries of the families. See \cite{DuMi, SaShTe, ShTe} for more on determining the symmetry of a family, and \cite{RudSa} for consequences of the second moment in universal behavior in another statistic, the $n$-level correlations.
\end{rek}

%%%%%%%%%%%%%%%%%%%%%%%%%%%%%%%%%%%%%%%%%%%%%%%%%%%%%%%%%%%%%
%%%%%%%%%%%%%%%%%%%%%%%%%%%%%%%%%%%%%%%%%%%%%%%%%%%%%%%%%%%%%
\subsection{Bias in Elliptic Curve Families}
\label{sec.define.Dirichlet}

As the initial impetus for this work came from families of elliptic curves, we start with a description of those results and then move to other families. Consider the elliptic surface $\mathcal{E}$ given by $y^2 = x^3 + A(T)x + B(T)$ over $\Q(T)$ for $A(T), B(T) \in \Z[T]$; for almost all $t \in \Z$ the specialization $\mathcal{E}_t$ obtained by setting $T=t$ is an elliptic curve.
Let $a_{\mathcal{E}_t}(p)$ denote $p$ minus the number of solutions to $y^2 \equiv x^3 + A(t) x + B(t) \pmod p$; this quantity is known as the $p^{\text{th}}$ \textbf{Dirichlet coefficient} of $\mathcal{E}_t$, as such coefficients appear in the series expansion for the associated $L$-function.
Set
\be A_{r,\mathcal{E}}(p) \ = \ \sum_{t\bmod p} a_{\mathcal{E}_t}^r(p), \ee so $A_{r,\mathcal{E}}(p)/p$ is the $r$\textsuperscript{th} moment.\footnote{We need to divide the $p$ sum to have a moment, as we are averaging over $p$ terms; some works include this division in the definition, others have it separate. We choose not to divide by $p$ so that our sums are integer polynomials in $p$.} By work of Nagao, Rosen and Silverman \cite{Na, RoSi} the first moment is related to the rank of the elliptic surface (it is a theorem if the surface is rational\footnote{An elliptic surface $y^2 = x^3 + A(T)x + B(T)$ is rational if and only if one of the following is true: (1) $0 < \max(\deg A, 2\deg B) < 12$; (2) $3\deg A = 2\deg B = 12$ and ${\rm ord}_{T=0} T^{12} \Delta(T^{-1}) = 0$.}, and conjectural in general):
\be \lim_{X\to\infty} \frac1{X} \sum_{p \le X} -\frac{A_{1,\mathcal{E}}(p)}{p} \log p \ = \ {\rm rank}~\mathcal{E}(\Q(T)). \ee
The main term of the second moment determines which classical compact group governs the behavior of zeros near the central point \cite{DuMi,Mi2,ShTe}, and by work of Michel \cite{Mic} we have \be A_{2,\mathcal{E}}(p) \ = \ p^2 + O(p^{3/2}) \ee if $j(T)$ is non-constant for the family.

The interesting observation in \cite{Mi1, Mi3} is that, in every family of elliptic curves investigated, the largest lower order term in $A_{2,\mathcal{E}}(p)$ which did not average to zero had a negative average. While there were some families with a lower order term of size $p^{3/2}$ (and thus Michel's bound is sharp), in all those families such terms were on average zero. There were many families with a lower order term of size $-m_{\mathcal{E}} p$ for some $m_{\mathcal{E}} > 0$. While these terms are too small to influence the main term, they yield corrections of size the reciprocal of the logarithm of the conductors (which is the natural spacing between zeros near the central point); explicitly, they increase the $1$-level  density by \begin{eqnarray} \frac{2m_{\mathcal{E}}}{\log Q} \sum_p \widehat{\phi}\left(2\frac{\log p}{\log Q}\right) \frac{\log p}{p^2},\end{eqnarray} and thus break the universality of the main terms and lead to lower order terms depending on the arithmetic of the family.

For small conductors one consequence of the negative bias $-m_{\mathcal{E}}p$ is to increase the bounds for the average rank in families. While the amount of the increase is too small to explain the entirety of the observed excess rank phenomenon (see \cite{DHKMS1, DHKMS2} for an explanation through the excised orthogonal ensemble), it \emph{is} in the right direction and it is fascinating that the bias is always observed to be in the same direction. Of course, the families investigated are very special, as they are ones where the Legendre sum can be computed exactly so that the second moment can be fully determined, and thus may not be truly representative; see Appendix \ref{sec:michellewu} for some families where we cannot compute the second moment in closed form but we do still observe the negative bias, and a discussion in Appendix \ref{sec:rogerwang} on some one-parameter families of moderate rank where there may be a positive bias. We present some of this evidence in Table \ref{table:ellcurvedata} (the calculations for most of these families can be found in \cite{Mi1, Mi3}; the new families can be handled using similar techniques).

\begin{table}
\begin{tabular}{lcl} \\
${\ \ \ \ \ \ \ \mbox{Family}  \ \ \ \ \ \ \ }$ & ${\ \ \ \ \ \ \
A_{1,\mathcal{E}}(p) \ \ \ \ \ \ \ }$ & ${\ \ \ \ \ \ \
A_{2,\mathcal{E}}(p) \ \ \ \ \ \ \ }$
\\ \hline

$y^2 = x^3 + Sx + T$ & $\ \ 0$    & $p^3 - p^2$ \\

$y^2 = x^3 + 2^4(-3)^3(9T+1)^2$ & $\ \ 0$ & $\Big\{ {2p^2 - 2p \ \
\ p
\equiv 2 \bmod 3\atop \ 0 \ \ \ \ \ \ \ \ \ \ p \equiv 1 \bmod 3 }$ \\

$y^2 = x^3 \pm 4(4T+2)x$ & $\ \ 0$ & $\Big\{ {2p^2 - 2p \ \ \  p
\equiv
1 \bmod 4 \atop \ 0 \ \ \ \ \ \ \ \ \ \ p \equiv 3 \bmod 4}$ \\

$y^2 = x^3 + (T+1)x^2 +Tx$ & $\ \ 0$ & $p^2 - 2p - 1$ \\

$y^2 = x^3 + x^2 + 2T+1$ & $\ \ 0$ & $p^2 - 2p - \js{-3}$ \\

$y^2 = x^3 + Tx^2 + 1$ & $-p$ & $p^2 -
n_{3,2,p}p - 1 + c_{3/2}(p)$ \\

$y^2 = x^3 - T^2x + T^2$ & $-2p$ & $p^2 - p - c_1(p) - c_0 p$ \\

$y^2 = x^3 - T^2x + T^4$ & $-2p$ & $p^2 - p - c_1(p) - c_0 p$ \\

$y^2 = x^3 + T x^2 - (T + 3) x + 1$ & $-2\delta_{1;4}(p) p$ & $p^2 - 4 \delta_{1;6}(p) p - 1$  \\

\ & & \\

\end{tabular}
\caption{\label{table:ellcurvedata} First and second moments for elliptic curve families, with $n_{3,2,p}$ the number of cube roots of $2$ modulo $p$, $c_0 p = \left[\js{-3} + \js{3}\right]$, $c_1(p) = \left[\sum_{x
\bmod p} \js{x^3-x}\right]^2$, $c_{3/2}(p) = p\sum_{x(p)} \js{4x^3+1}$, and $\delta_{a;m}(p) = 1$ if $p \equiv a \bmod m$ and otherwise is 0. The subscript on $c_k(p)$ indicates the power of $p$.}
\end{table}

%%%%%%%%%%%%%%%%%%%%%%%%%%%%%%%%%%%%%%%%%%%%%%%%%%%%%%%%%%%%%
%%%%%%%%%%%%%%%%%%%%%%%%%%%%%%%%%%%%%%%%%%%%%%%%%%%%%%%%%%%%%
\subsection{Outline}

In the present work we explore biases in the lower order terms of second moments in several different families of $L$-functions. A preliminary analysis of some elliptic curve families was reported in \cite{MMRW}; we provide additional proofs for some of the families explored there as well as some new ones, and also investigate numerous other natural families (Dirichlet $L$-functions, cuspidal newforms, and their convolutions). In \cite{HKLM} these investigations are extended to hyper-elliptic curves, and in Appendix \ref{sec:michellewu} we report on some recent extensions to other one-parameter families of elliptic curves, as well as some two-parameter families.

One challenge is to make sure we are comparing similar items in each case. In particular, what normalization should we use for the sums? For example, consider one-parameter elliptic curve families over $\Q(T)$ with $T \in [N, 2N]$. For each $p$, $p$ times the second moment is $\sum_{t (p)} a_{\mathcal{E}_t}(p)^2$ (we will have $\lfloor N/p\rfloor$ copies of this, and then an incomplete sum of at most $p$ terms); by Michel's work the main term is of size $p^2$ if $j(T)$ is not constant, and we observe lower order terms not averaging to zero of size $p$. The sizes here are due to our normalizations, where the elliptic curve $L$-function has critical strip $1/2 < {\rm Re}(s) < 3/2$ and functional equation $s \to 2-s$. To compare with other families of $L$-functions, where the critical strip is taken to be $0 < {\rm Re}(s) < 1$, we normalize and study $a_{\mathcal{E}_t}(p)/\sqrt{p}$; by Hasse's theorem each of these terms is at most $2$ in absolute value.

Thus our complete sums over $t \bmod p$ have a main term of size $p$ and the first term not averaging to zero is of size 1; however, we should also average over the family. For one-parameter families of elliptic curves we often look at $t \in [N, 2N]$ with $N\to\infty$; this gives us $\lfloor N/p\rfloor$ complete sums of $t \bmod p$ and one incomplete sum of size at most $p$. Thus we have \be \frac1{N} \sum_{t=N}^{2N} \frac{a_{\mathcal{E}_t}(p)^2}{p} \ = \ \frac{\lfloor N/p\rfloor}{N}  \sum_{t \bmod p} \frac{a_{\mathcal{E}_t}(p)^2}{p} + \frac1{N}\sideset{}{^\ast}\sum_t \frac{a_{\mathcal{E}_t}(p)^2}{p}, \ee where the asterisk denotes an incomplete sum of at most $p-1$ terms. Thus, as long as $p$ is significantly less than $N$, the incomplete sum is negligible relative to the first term. We will sum over $p \le X$ and divide by $\pi(X)$, the number of primes at most $X$, namely, \be\label{eq:ellcurvesecondmomentfullsum} \frac1{\pi(X)} \sum_{p \le X} \frac1{N} \sum_{t=N}^{2N} \frac{a_{\mathcal{E}_t}(p)^2}{p} \ =  \ \frac{1+o(1)}{\pi(X)} \sum_{p\le X} \sum_{t \bmod p} \frac{a_{\mathcal{E}_t}(p)^2}{p^2}. \ee
If the complete sum over $t \bmod p$ equals $p^2 - m_{\mathcal{E}}p$ then the $p^2$ factor yields the main term of 1 (recall we have $\lfloor N/p\rfloor$ complete sums) while the $m_{\mathcal{E}}$ yields the leading lower order term of size \be -\frac1{\pi(X)} \sum_{p \le X} \frac{m_{\mathcal{E}}}{p} \ = \ -\frac{m_{\mathcal{E}}\log\log X \log X}{X} \left(1 + o(1)\right),\ee  where we used standard results for the size of $\pi(X)$ and the sum of the reciprocals of the primes up to $X$.

\begin{rek} It's important to put the size of the main and leading error term in perspective. With this averaging, the small bias leads to a contribution which is barely detectable, tending to zero rapidly as the range of primes averaged over grows. This is quite reasonable, as the relative size of the bias to the main term, at each prime, is of size $1/p$, and this leads to a slowly growing sum. Note this behavior is very different than what happens when we look at the contribution of these lower order terms in the $n$-level densities and the excess rank investigations (this means looking at the average rank for forms in our family with parameter up to some specified value, and letting that value tend to infinity). The difference is due to how we weigh the sums. For the $n$-level densities and the rank, we are not dividing by the number of primes and are weighting each term by $(\log p)/p$. Thus the main and leading lower order terms are of comparable magnitude; there is an enormous difference between comparing a sum of $1$ versus $1/p$ and a sum of $1/p$ versus $1/p^2$. \end{rek}

%Thus we multiply by $N/p$ as that is the number of complete sums we have) we see there is a contribution from a lower order term of size $p$ as we sum over primes. We still have a contribution if we divide $a_{\mathcal{E}(t)}(p)$ by $\sqrt{p}$ so that the summands are normalized to be of size 1; in this case the observed lower order terms are of size $1$, and even when we divide by $p$ (or multiply by $N/p$) and sum over primes up to $X$ and divide by $\pi(X)$ we are left with a non-zero contribution. This is very different than other families; for example, for some of our Dirichlet families the contribution is zero.

In \S\ref{second_moms} we verify the elliptic curve Bias Conjecture for several one parameter families over $\Q(T)$, and then extend to include higher moments for some families with constant $j(T)$ in \S\ref{sec:constantj}; recent work by Kazalicki and Naskrecki \cite{KN1, KN2} have proved the bias conjecture for several families under standard conjectures; some recent work suggests that the bias conjecture might fail for families with moderate rank (see Appendix \ref{sec:rogerwang}). We turn to Dirichlet $L$-functions in \S\ref{sec:dirichlet}, and see in Theorems \ref{theorem:biasallchar} and \ref{theorem:ltorsionbias} that under GSH the bias is sometimes positive and sometimes negative, similar to the behavior seen in investigating Chebyshev's bias. Using the Petersson formula we show in Theorem \ref{theorem:leveltoinfinitysymr} that there can be a small positive bias for cuspidal newforms in \S\ref{sec:gltwo}, and then conclude in \S\ref{sec:convolution} by investigating how the bias behaves under convolution of families. Theorem \ref{theorem:dirichletconvolved} looks at two families of Dirichlet characters, Theorem \ref{theorem:dirichletcuspgrow} replaces one of those families with a family of $r$\textsuperscript{{\rm th}} symmetric lifts of cuspidal newforms, and Theorem \ref{theorem:convcuspgrow} studies two symmetric lift families. There is tremendous universality in the behavior of the zeros of families of $L$-functions; there are very few main terms that are seen for the $n$-level densities; however, by looking at the lower order terms we can distinguish the behavior in different families. It is here, in the lower order terms, that the arithmetic of the family lives, and the motivation of this paper is to determine, or at least see and leave to others to investigate further, how the arithmetic affects the behavior of the zeros.

%%%%%%%%%%%%%%%%%%%%%%%%%%%%%%%%%%%%%%%%%%%%%%%%%%%%%%%%%%%%%%%%%%%%%%%%%%%%%%%%%%%%%%%%%%%%%%%%%%%%%%%%%%%%%%%%%%%%%%%%%%%%%%%%
%%%%%%%%%%%%%%%%%%%%%%%%%%%%%%%%%%%%%%%%%%%%%%%%%%%%%%%%%%%%%%%%%%%%%%%%%%%%%%%%%%%%%%%%%%%%%%%%%%%%%%%%%%%%%%%%%%%%%%%%%%%%%%%%
%%%%%%%%%%%%%%%%%%%%%%%%%%%%%%%%%%%%%%%%%%%%%%%%%%%%%%%%%%%%%%%%%%%%%%%%%%%%%%%%%%%%%%%%%%%%%%%%%%%%%%%%%%%%%%%%%%%%%%%%%%%%%%%%
%%%%%%%%%%%%%%%%%%%%%%%%%%%%%%%%%%%%%%%%%%%%%%%%%%%%%%%%%%%%%%%%%%%%%%%%%%%%%%%%%%%%%%%%%%%%%%%%%%%%%%%%%%%%%%%%%%%%%%%%%%%%%%%%

\section{Linear one-parameter families of elliptic curves} \label{second_moms}

In this and the next section we amass more evidence for the Bias Conjecture by demonstrating negative bias in additional one-parameter families of elliptic curves. See \cite{MMRW,Mi1,Mi3} for earlier calculations on the subject. The families studied are amenable to direct calculation; thus these are not generic families but ones chosen so that the resulting Legendre sums are tractable.

We collect several standard lemmas for calculating biases in elliptic curve families. Throughout this paper, $\js{\cdot}$ denotes a Legendre symbol, and $\sum_{x(p)}$ denotes a sum over all residue classes modulo $p$. Linear sums and quadratic sums of Legendre symbols can be easily evaluated but cubic and higher typically cannot (which is why it is so hard to work with elliptic curves, as these give rise to cubic Legendre sums). The idea behind all the constructions is to bypass the difficulty of dealing with these cubic and higher Legendre sums by looking at carefully chosen one-parameter families where we can execute the sum over $t$ modulo $p$ exactly \emph{and} be left with sums of a special form that can be evaluated in closed form. We state below the result for linear and quadratic sums; see for example \cite{BEW98, Mi1} for the standard proof.

\begin{lem}\label{lem:sumlegendrelinquad} Let $a, b, c$ be positive integers, and assume $p \nmid a$. Then
\be \label{eq:linearlegendre}
\sum_{x\bmod p} \left(\frac{ax+b}{p}\right) \ = \  0
\ee
and, letting $\Delta = b^2 - 4ac$ (the discriminant in $x$ of $ax^2 + bx + c$),
\be \label{eq:quadraticlegendre}
\sum_{x\bmod p} \left(\frac{ax^2+bx+c}{p}\right) \ = \  \begin{cases} -\js{a} & \mbox{{\rm if} } p \nmid \Delta \\ (p - 1)\js{a} & \mbox{{\rm if} } p \mid \Delta. \end{cases}
\ee
\end{lem}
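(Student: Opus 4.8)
The plan is to reduce each sum to a sum of a single Legendre symbol over a complete residue system. For \eqref{eq:linearlegendre} this is immediate: since $p \nmid a$, the map $x \mapsto ax+b$ is a bijection of $\Z/p\Z$, so $\sum_{x(p)} \js{ax+b} = \sum_{y(p)} \js{y}$, and the latter vanishes because among the $p-1$ nonzero residues exactly half are squares and half are nonsquares, while the $y \equiv 0$ term contributes $0$. Nothing beyond the definition and multiplicativity of $\js{\cdot}$ is needed here.

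For \eqref{eq:quadraticlegendre} I would first complete the square. Multiplying the argument by $4a$ — a unit times a perfect square modulo the odd prime $p$ — and using multiplicativity gives
\be
\js{ax^2+bx+c} \ = \ \js{a}\,\js{(2ax+b)^2 - D}, \qquad D \ := \ b^2 - 4ac.
\ee
Since $p \nmid 2a$, the substitution $u = 2ax+b$ runs over all residues modulo $p$ as $x$ does, so $\sum_{x(p)} \js{ax^2+bx+c} = \js{a}\sum_{u(p)} \js{u^2 - D}$. When $p \mid D$ this inner sum is $\sum_{u(p)} \js{u^2} = p-1$ (the $u \equiv 0$ term vanishing, every nonzero $u^2$ a square), which yields the second case.

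The one step with genuine content is evaluating $\sum_{u(p)} \js{u^2 - D}$ when $p \nmid D$, and I would do this by point counting. The number of pairs $(u,v) \in (\Z/p\Z)^2$ with $v^2 = u^2 - D$ equals $\sum_{u(p)}\bigl(1 + \js{u^2-D}\bigr) = p + \sum_{u(p)} \js{u^2-D}$, once one checks that $1 + \js{w}$ is the number of square roots of $w$ whether $w$ is a nonzero square, a nonsquare, or $0$. On the other hand $v^2 - u^2 = -D$ factors as $(v-u)(v+u) = -D$, and since $p$ is odd the map $(u,v) \mapsto (s,t) := (v-u,\, v+u)$ is a bijection of $(\Z/p\Z)^2$; as $-D \not\equiv 0 \pmod p$, the equation $st = -D$ has exactly $p-1$ solutions. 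Equating the two counts gives $\sum_{u(p)} \js{u^2-D} = -1$, hence the sum in \eqref{eq:quadraticlegendre} equals $-\js{a}$, completing the proof. The only delicacy is keeping the degenerate residue (where $u \equiv 0$, resp. $u^2 \equiv D$) correctly weighted in the point count; everything else is bookkeeping, and this is the standard argument (cf. \cite{BEW98, Mi1}).
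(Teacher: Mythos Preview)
Your proof is correct. The paper itself does not prove this lemma at all; it merely states the result and points to \cite{BEW98, Mi1} for ``the standard proof.'' What you have written is precisely that standard argument --- complete the square, reduce to $\sum_{u(p)}\js{u^2-D}$, and evaluate the latter by counting points on $v^2=u^2-D$ via the factorization $(v-u)(v+u)=-D$ --- so there is nothing to compare beyond noting that you have supplied the details the paper omits.
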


%We often use an averaging result for Legendre symbols.

%\begin{lem} \label{lem:averagelegendre}
%Let $\pi(X)$ be the number of primes $p \leq X$, and fix $x \in \mathbb{Z}$. Then
%\be \label{eq:averagelegendre}
%\lim_{X \rightarrow \infty} \frac{1}{\pi(X)} \sum_{p \leq X} \left(\frac{x}{p}\right) \ = \  \begin{cases} 1 & \mbox{{\rm if} } x \in \mathbb{Z} \mbox { {\rm is a non-zero square}} \\ 0 & \mbox {{\rm otherwise}}. \end{cases}
%\ee
%\end{lem}

%Throughout this section, we consider families of elliptic curves of the form $y^2 = P(x)T + Q(x)$ where $P$ and $Q$ are (at most) cubic polynomials. We define $\Delta(x,y) = \left(P(x)Q(y) - P(y)Q(x)\right)^2$, which vanishes if and only if $P(x)Q(y)=P(y)Q(x)$.

We first investigate families of the form $\mathcal E_t: y^2 = P(x)t + Q(x)$ where $P$ and $Q$ are fixed polynomials. For convenience, we suppose $p$ is an odd prime. Recalling $a_{\mathcal{E}_t}(p) = -\sum_{x(p)}\legendre{P(x)t + Q(x)}{p}$, we have
\begin{align*}
A_{2,\mathcal E}(p) & \ = \ \sum_{t \bmod p} a_{\mathcal{E}_t}(p)^2 \\
&= \ \sum_{t\bmod p} \sum_{x\bmod p} \sum_{y\bmod p} \legendre{P(x)t + Q(x)}{p} \legendre{P(y)t + Q(y)}{p} \\
&= \ \sum_{x\bmod p} \sum_{y\bmod p} \left[\sum_{t\bmod p} \legendre{P(x)P(y)t^2 + \left(P(x)Q(y) + Q(x)P(y)\right)t + Q(x)Q(y)}{p}\right].\\
\end{align*}
If $P(x)P(y)$ is not zero modulo $p$ we have a quadratic in $t$, with discriminant \bea \Delta(x,y) \ := \ \left(P(x)Q(y) + Q(x)P(y)\right)^2 - 4 P(x)P(y)Q(x)Q(y) \ = \ \left(P(x)Q(y) - Q(x)P(y)\right)^2;\nonumber\\ \eea note $\Delta(x,y) = 0$ if and only if $P(x)Q(y) - Q(x)P(y) = 0$. Applying Lemma \ref{lem:sumlegendrelinquad} to the quadratic sum over $t$ yields
\begin{align}\label{eq:expansionlinearellcurvesum}
A_{2,\mathcal E}(p) & \ = \ p\bigg[\sum_{P(x)\equiv0}\legendre{Q(x)}{p}\bigg]^2\ + \ \sum_{x,y\bmod p} \begin{cases} -\js{P(x)P(y)} & \mbox{{\rm if} } p \nmid \Delta(x,y) \\ (p - 1)\js{P(x)P(y)} & \mbox{{\rm if} } p \mid \Delta(x,y) \end{cases}\bigg\}\nonumber\\
& = \ p\bigg[\sum_{P(x)\equiv0}\legendre{Q(x)}{p}\bigg]^2 \ - \ \bigg[\sum_{x\bmod p}\legendre{P(x)}{p}\bigg]^2 \ + \ p\sum_{\Delta(x,y)\equiv 0}\legendre{P(x)P(y)}{p}.
\end{align}

We use this formula to compute $A_{2,\mathcal E}(p)$ for families whose sum over $\Delta(x,y)\equiv 0$ is tractable.

%%%%%%%%%%%%%%%%%%%%%%%%%%%%%%%%%%%%%%%%%%%%%%%%%%%%%%%%%%%%%%%%%%%
%%%%%%%%%%%%%%%%%%%%%%%%%%%%%%%%%%%%%%%%%%%%%%%%%%%%%%%%%%%%%%%%%%%
%%%%%%%%%%%%%%%%%%%%%%%%%%%%%%%%%%%%%%%%%%%%%%%%%%%%%%%%%%%%%%%%%%%

%first family
\begin{proposition} \label{prop:fam1}
The one-parameter family
\begin{align}
\mathcal{E}: y^2 & \ = \ (ax+b)(cx^2+dx+e+T)
\end{align}
with $p>3$ prime, $a,b,c,d,e\in\Z$ and $p\nmid a,c$ has vanishing first moment, hence rank zero, and second moment
\begin{align}
A_{2,\mathcal E}(p) \ = \ \begin{cases}
p^2-\left(2+\legendre{-1}{p}\right)p &{\rm if}\  p\nmid ad-2bc\\
(p^2-p)\left(1+\legendre{-1}{p}\right) &{\rm if}\  p\mid ad-2bc.
\end{cases}
\end{align}
\end{proposition}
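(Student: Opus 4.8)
\emph{Setup and first moment.} The plan is to run the general identity \eqref{eq:expansionlinearellcurvesum} with $P(x)=ax+b$ and $Q(x)=(ax+b)(cx^2+dx+e)$, exploiting the factorization $Q=P\cdot R$ with $R(x):=cx^2+dx+e$; throughout take $p$ odd (the prime $2$ is trivial and asymptotically irrelevant). For each fixed $x$ with $ax+b\neq 0$, the inner sum $\sum_{t(p)}\legendre{(ax+b)t+Q(x)}{p}$ is linear in $t$ and hence vanishes by \eqref{eq:linearlegendre}; for the unique $x$ with $ax+b\equiv 0$ one has $Q(x)\equiv 0$ as well, so that term is $0$. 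Thus $A_{1,\mathcal E}(p)=0$ for all $p\nmid ac$, and the surface has rank $0$ by the Rosen--Silverman theorem (a short degree check shows it is rational).

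\emph{Reducing the second moment.} Insert $P,Q$ into \eqref{eq:expansionlinearellcurvesum}. If $ax+b\equiv 0$ or $ay+b\equiv 0$ the polynomial in $t$ collapses to the zero polynomial (all three of its coefficients carry a vanishing factor, using $Q=PR$), so such pairs contribute nothing; for the remaining pairs the $t$-polynomial is a genuine quadratic whose discriminant is $(ax+b)^2(ay+b)^2(R(x)-R(y))^2$, which is $\equiv 0$ precisely when $R(x)\equiv R(y)$. Applying Lemma \ref{lem:sumlegendrelinquad} and using $(p-1)\legendre{\cdot}{p}=p\legendre{\cdot}{p}-\legendre{\cdot}{p}$,
\begin{align}
A_{2,\mathcal E}(p) \ &= \ -\sum_{\substack{x,y\,(p)\\ ax+b,\,ay+b\,\neq\,0}}\legendre{(ax+b)(ay+b)}{p} \notag\\
&\qquad +\ p\sum_{\substack{x,y\,(p):\;R(x)\,\equiv\,R(y)\\ ax+b,\,ay+b\,\neq\,0}}\legendre{(ax+b)(ay+b)}{p}.
\end{align}
The first sum factors as $-\big(\sum_{x(p)}\legendre{ax+b}{p}\big)^2=0$ by \eqref{eq:linearlegendre}, so the whole computation collapses to the weighted count over the locus $R(x)\equiv R(y)$.

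\emph{Evaluating the count.} Since $p\nmid c$ and $p$ is odd, $R(x)-R(y)=(x-y)\big(c(x+y)+d\big)$, so the solution set is the diagonal $x=y$ together with the line $x+y\equiv -d/c$, which meet only at $x_0:=-d/(2c)$. The diagonal gives $\sum_{x:\,ax+b\neq 0}\legendre{(ax+b)^2}{p}=p-1$. On the line, put $u=ax+b$; then $ay+b=\beta-u$ with $\beta:=2b-ad/c$, the summand becomes $\legendre{u(\beta-u)}{p}=\legendre{-u^2+\beta u}{p}$, which vanishes at $u=0$ and $u=\beta$ so the side conditions are automatic, and by \eqref{eq:quadraticlegendre} the line contributes $-\legendre{-1}{p}$ if $p\nmid\beta$ and $(p-1)\legendre{-1}{p}$ if $p\mid\beta$ --- and $p\mid\beta\iff p\mid ad-2bc$, which is exactly the stated dichotomy. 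Finally one corrects for the double-counted point $x_0$, where $ax_0+b=\beta/2$: if $p\nmid\beta$ this point is legitimate and appears in both pieces, so subtract $\legendre{(\beta/2)^2}{p}=1$; if $p\mid\beta$ then $ax_0+b\equiv 0$, so $x_0$ lies outside the range of summation and no correction is needed. The bracketed sum is therefore $p-2-\legendre{-1}{p}$ in the first case and $(p-1)(1+\legendre{-1}{p})$ in the second; multiplying by $p$ yields the two claimed formulas.

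\emph{Main obstacle.} Nothing in this argument is deep; the only delicate point is the bookkeeping in the last step --- excising the intersection point of the two components of $\{R(x)\equiv R(y)\}$ while simultaneously respecting the side conditions $ax+b\neq 0$ and $ay+b\neq 0$. The convenient feature that makes the final formula clean is that in the degenerate case $p\mid ad-2bc$ the overlap point lands precisely on the excluded locus, so the correction term disappears exactly when it would otherwise have changed the answer.
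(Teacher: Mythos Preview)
Your proof is correct and follows essentially the same route as the paper: write $Q=PR$ with $P(x)=ax+b$, $R(x)=cx^2+dx+e$, reduce via Lemma \ref{lem:sumlegendrelinquad} to a sum over the locus $R(x)\equiv R(y)$, factor this as the diagonal union the line $x+y\equiv -d/c$, and do inclusion--exclusion over their intersection $x_0=-d/(2c)$. The only cosmetic differences are that you dispose of the pairs with $P(x)P(y)\equiv 0$ at the outset (the paper folds them into the $\Delta\equiv 0$ locus, where they contribute zero anyway) and you substitute $u=ax+b$ on the line rather than computing $P(x)P(-x-d/c)$ directly; the arithmetic is otherwise identical.
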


\begin{proof} We write $\mathcal E_t: y^2 = P(x)t + Q(x)$ where $P(x)=ax+b$ and $Q(x)=(ax+b)(cx^2+dx+e)$. The first moment is
\begin{align}
A_{1,\mathcal E}(p) & \ = \ \sum_{t \bmod p} a_{\mathcal{E}_T}(p)\nonumber\\
& \ = \ -\sum_{t\bmod p} \sum_{x\bmod p}\legendre{P(x)t + Q(x)}{p} \nonumber\\
& \ = \ -\sum_{x\bmod p}\legendre{ax+b}{p}\sum_{t\bmod p} \legendre{t + (cx^2+dx+e)}{p} = 0
\end{align}
by Lemma \ref{lem:sumlegendrelinquad} applied to the sum over $t\bmod{p}$. Hence $\mathcal E$ has rank zero as it is a rational surface.

Write the family as $\mathcal E_t: y^2 = P(x)t + Q(x)$ for $P(x)=ax+b$ and $Q(x)=P(x)(cx^2+dx+e)$. Then $P(x)\mid Q(x)$ and $\sum_{x(p)}\legendre{P(x)}{p}=0$ so by \eqref{eq:expansionlinearellcurvesum}, we have
\bea A_{2,\mathcal E}(p) = p\sum_{\Delta(x,y)\equiv 0}\legendre{P(x)P(y)}{p}. \eea

Since
\begin{align}
P(y)Q(x) - P(x)Q(y) & \ = \ (ax+b)(ay+b)[(cx^2+dx+e)-(cy^2+dy+e)] \nonumber\\
& \ = \ (ax+b)(ay+b)(x-y)(cx+cy+d),
\end{align}
we deduce $\Delta(x,y) \equiv 0$ if and only if $P(x)\equiv0$, $P(y)\equiv0$, $x\equiv y$, or $x+y\equiv-d/c$. Thus by inclusion-exclusion
\begin{align}
\sum_{\Delta(x,y)\equiv 0} \legendre{P(x)P(y)}{p} & \ = \ \sum_{x+y\equiv-d/c}\legendre{P(x)P(y)}{p} + \sum_{x\equiv y}\legendre{P(x)P(y)}{p}- \sum_{x+y\equiv-d/c \atop x\equiv y}\legendre{P(x)P(y)}{p} \nonumber\\
& \ = \ \sum_{x\;(p)}\legendre{P(x)P(-x-d/c)}{p} + (p-1) - \legendre{P(-d/2c)^2}{p}.
\end{align}
We have $P(x)P(-x-d/c)=-a^2x^2 - a^2d/cx - abd/c + b^2$ so that
\begin{align}
\sum_{x\;(p)}\legendre{P(x)P(-x-d/c)}{p}& \ = \ \legendre{-1}{p}\cdot\begin{cases}
-1 &{\rm if\ } p\nmid ad-2bc\\
p-1&{\rm if\ } p\mid ad-2bc
\end{cases}
\end{align}
since $P(x)P(-x-d/c)$ has discriminant $a^2(ad/c-2b)^2$ and $p\nmid a,c$. Also note that
\begin{align}
\legendre{P(-d/2c)}{p}^2 \ = \ \legendre{-ad/2c+b}{p}^2 \ = \ \begin{cases}
1 &{\rm if\ } p\nmid ad-2bc\\
0 &{\rm if\ } p\mid ad-2bc.
\end{cases}
\end{align}
Plugging into the above gives
\begin{align}
\sum_{\Delta(x,y)\equiv 0} \legendre{P(x)P(y)}{p} \ = \ \begin{cases}
-\legendre{-1}{p}+(p-1)-1 &{\rm if\ } p\nmid ad-2bc\\
(p-1)\legendre{-1}{p}+(p-1)-0 &{\rm if\ } p\mid ad-2bc
\end{cases}
\end{align}
and thus
\begin{align}
A_{2,\mathcal E}(p) \ = \ p\sum_{\Delta(x,y)\equiv 0} \legendre{P(x)P(y)}{p} =
\begin{cases}
p^2-\left(2+\legendre{-1}{p}\right)p &{\rm if\ } p\nmid ad-2bc\\
(p^2-p)\left(1+\legendre{-1}{p}\right) &{\rm if\ } p\mid ad-2bc.
\end{cases}
\end{align}
\end{proof}

If $ad - 2bc$ is not zero, then for all sufficiently large $p$ we have $p\nmid ab-2bc$, and thus by Dirichlet's theorem for primes in arithmetic progression the main term is $p^2$ and half the time the leading lower order term is $-3p$ and half the time it is $-p$.

%%%%%%%%%%%%%%%%%%%%%%%%%%%%%%%%%%%%%%%%%%%%%%%%%%%%%%%%%%%%%%%%%%%
%%%%%%%%%%%%%%%%%%%%%%%%%%%%%%%%%%%%%%%%%%%%%%%%%%%%%%%%%%%%%%%%%%%
%%%%%%%%%%%%%%%%%%%%%%%%%%%%%%%%%%%%%%%%%%%%%%%%%%%%%%%%%%%%%%%%%%%

We compute the first and second moments of three other one-parameter families. The proofs are similar to Proposition \ref{prop:fam1}; see Appendix \ref{sec:linearellipticfamilies} for details.

\begin{proposition} \label{prop:fam2}
The one-parameter family
\begin{align}
\mathcal E: y^2 & \ = \  (ax^2+bx+c)(dx+e+T)
\end{align}
with $p>3$ prime, $a,b,c,d,e\in\Z$ and $p\nmid a,d$ has vanishing first moment, hence rank zero, and second moment given by
\begin{align}
A_{2,\mathcal E}(p) & \ = \ \begin{cases}
p^2-\left(1+\legendre{b^2-4ac}{p}\right)p - 1 &{\rm if\ } p\nmid b^2-4ac\\
p - 1 &{\rm if\ } p\mid b^2-4ac.
\end{cases}
\end{align}
\end{proposition}

%third family
\begin{proposition}\label{prop:fam3}
The one-parameteter family
\begin{align}
\mathcal E: y^2 & \ = \  x(ax^2+bx+c+dTx)
\end{align}
with $p>3$ prime, $a,b,c,d\in\Z$ and $p\nmid a,d$ has vanishing first moment, hence rank zero, and second moment given by
\begin{align}
A_{2,\mathcal E}(p) \  = \ \begin{cases}
p^2 - 2 p - 1 &{\rm if\ } p\nmid c\\
p - 1 &{\rm if\ } p\mid c.
\end{cases}
\end{align}
\end{proposition}

% fourth family
\begin{proposition} \label{prop:fam4}
The one-parameteter family
\begin{align}
\mathcal E: y^2 & \ = \  x(ax+b)(cx+d+Tx)
\end{align}
with $p>3$ prime, $a,b,c,d\in\Z$ and $p\nmid a,d$ has vanishing first moment, hence rank zero, and second moment given by
\begin{align}
A_{2,\mathcal E}(p) \  = \ \begin{cases}
p^2-2p - 1 &{\rm if\ } p\nmid b\\
p^2 - p &{\rm if\ } p\mid b.
\end{cases}
\end{align}
\end{proposition}

%%%%%%%%%%%%%%%%%%%%%%%%%%%%%%%%%%%%%%%%%%%%%%%%%%%%%%%%%%%%%%%%%%%%%%%%%%%%%%%%%%%%%%%%%%%%%%%%%%%%%%%%%%%%%%%%%%%%%%%%%%%%%%%%%
%%%%%%%%%%%%%%%%%%%%%%%%%%%%%%%%%%%%%%%%%%%%%%%%%%%%%%%%%%%%%%%%%%%%%%%%%%%%%%%%%%%%%%%%%%%%%%%%%%%%%%%%%%%%%%%%%%%%%%%%%%%%%%%%%
%%%%%%%%%%%%%%%%%%%%%%%%%%%%%%%%%%%%%%%%%%%%%%%%%%%%%%%%%%%%%%%%%%%%%%%%%%%%%%%%%%%%%%%%%%%%%%%%%%%%%%%%%%%%%%%%%%%%%%%%%%%%%%%%%

\section{Elliptic curve families of constant \texorpdfstring{$j(T)-$}{j(T)-}invariant}\label{sec:constantj}
%\fix{Add in between sections 1 and 2--yuj}

So far, all of the families of elliptic curves that have been investigated for bias have had non-constant $j(T)$-invariant (in $T$). This is motivated by a result of Michel \cite{Mic}, which states that for such families,
\begin{align}
    A_{2, \mathcal{E}}(p)\ = \ p^2 + O \left (p^{3/2} \right ).
\end{align}
In this section, we study families of elliptic curves that have constant $j(T)$-invariant and observe that, for these families, the Bias Conjecture does not seem to apply in a sensible way. Moreover, our methods  allow us to compute moments of \textit{arbitrary} degree. This is notable because, in general, it is extremely difficult to compute moments higher than second of elliptic curve families, due to the complexity of the Legendre sums.

In Section \ref{j1728_moms}, we study the elliptic curve family $\mathcal E^r(T): y^2=x^3-T^rAx$, for any $r \in \N$ and $A \neq 0$. Such families have constant $j(T)$-invariant equal to $1728$. Our main result there is Theorem \ref{thm:j1728_main}.

In Section \ref{j0_moms}, we study the elliptic curve family $\mathcal E^r(T): y^2=x^3+T^r B$, for any $r \in \N$ and $B \neq 0$. Such families have constant $j(T)$-invariant equal to $0$. Our main result there is Theorem \ref{thm:j0_main}.

In Section \ref{sec:jarb}, we compute the $k$\textsuperscript{th} moment of the elliptic curve family $\mathcal{E}(T): y^2 \ = \ x^3+T^2Ax + T^3B$, for any $k$. Note that such families have constant $j(T)$-invariant given in terms of $A$ and $B$. Our main result there is Theorem \ref{thm:jarb}.

Since the moments of elliptic curve families are intimately related to the number of points on each elliptic curve within the family, we begin by detailing known results on counting points of elliptic curves of $j$-invariant $0$ and $1728$. For an elementary survey of results on counting points of elliptic curves of fixed $j$-invariant, see \cite{BKNT}.

%We make use of
%Gauss's four and six order theorems,
%EDIT -- see QUESTIONS in "Paper Submission Task list.txt"
%described below, which give a closed formula for the $p$\textsuperscript{th} Dirichlet coefficients of curves of the form $y^2=x^3-Ax$ and $y^2=x^3-B$.
%We then apply these results in order to compute all higher moments of the elliptic curve families $y^2=x^3-AT^rx$ and $y^2=x^3-BT^r$  for any $r\ge0$.

%%%%%%%%%%%%%%%%%%%%%%%%%%%%%%%%%%%%%%%%%%%%%%%%%%%%%%%%%%%%%%%%%%%
%%%%%%%%COUNTING POINTS PRELIMINARIES %%%%%%%%%%%%%%%%%%%%%%%%%%%%%%%%%%%%%%%%%%%%%%%%%%%%%%%%%%%
%%%%%%%%%%%%%%%%%%%%%%%%%%%%%%%%%%%%%%%%%%%%%%%%%%%%%%%%%%%%%%%%%%%
\subsection{Counting Points Preliminaries}
%The results in this subsection can mostly be found in \cite{IR,Wash}.

%%%%%%%%%%%%%%%%%%%%%%%%%%%%%%%%%%%%%%%%%%%%%%%%%%%%%%%%%%%%%%%%%%%
\subsubsection{Elliptic Curves of $j$-invariant $0$}
Note that all elliptic curves with $j$-invariant $0$ over a finite field take the form $y^2 = x^3+B$. The following lemma shows that the order of an elliptic curve group over $\F_p$ is determined by the sextic residue class of $B$.

\begin{lemma}[Proposition 2.1, \cite{BKNT}]\label{eq_sextic}
The elliptic curves $E_1: y^2 = x^3+B$ and $E_2: y^2 = x^3+t^6B$, where $t, B \in \F_p^{\times}$, have the same number of points over $\F_p$.
\end{lemma}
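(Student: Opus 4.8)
The plan is to exhibit an explicit isomorphism of $E_1$ with $E_2$ defined over $\F_p$; the equality of point counts is then immediate. The natural candidate is the scaling map
\begin{align}
\varphi\colon E_1 \longrightarrow E_2, \qquad (x,y) \longmapsto (a^2 x,\, a^3 y),
\end{align}
together with $\varphi(\mathcal{O}) = \mathcal{O}$ on the point at infinity. First I would verify that $\varphi$ lands in $E_2$: if $y^2 = x^3 + k$, then setting $X = a^2 x$ and $Y = a^3 y$ gives $Y^2 = a^6 y^2 = a^6(x^3+k) = (a^2x)^3 + a^6 k = X^3 + a^6 k$, so $(X,Y)$ satisfies the equation of $E_2$.

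Next I would check bijectivity. Since $a \in \F_p^\times$, both $a^2$ and $a^3$ are units in $\F_p$, so $\varphi$ has the obvious inverse $(X,Y) \mapsto (a^{-2}X, a^{-3}Y)$ on affine points, and it fixes $\mathcal{O}$; hence $\varphi$ is a bijection on $\F_p$-rational points. (One should also note that the substitution is an admissible change of variables, so it preserves the group structure, though for the statement only the bijection of point sets is needed.) Therefore $|E_1(\F_p)| = |E_2(\F_p)|$, which is the claim. Along the way I would record the standing hypothesis $p > 3$ (and $p \nmid k$, which is built into $k \in \F_p^\times$), so that both Weierstrass equations indeed define elliptic curves; the discriminant of $y^2 = x^3 + k$ is $-432 k^2 \not\equiv 0 \pmod p$ precisely under these conditions.

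I do not anticipate a genuine obstacle here: the entire content is the observation that twisting a $j$-invariant-$0$ curve by a \emph{sixth} power is trivial, since a sixth power can be absorbed by simultaneously rescaling $x$ by its cube root (a square) and $y$ by its square root (a cube). The only point requiring a word of care is making sure the scaling factors $a^2$ and $a^3$ are invertible modulo $p$, which is exactly why the hypothesis $a \in \F_p^\times$ (rather than merely $a \in \F_p$) is included in the statement.
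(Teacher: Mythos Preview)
Your proposal is correct and matches the paper's proof essentially line for line: the paper also uses the map $(x,y)\mapsto(a^2x,a^3y)$, verifies it carries $E_1$ into $E_2$ by the same computation $a^6y^2=a^6x^3+a^6k$, and notes invertibility from $a\neq 0$. Your additional remarks about the point at infinity, the discriminant, and the hypothesis $p>3$ are extra care not spelled out in the paper, but the core argument is identical.
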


Suppose $p\equiv 1\bmod{6}$ is a prime. We can partition $\F_p^{\cross}$ into six equivalence classes via the relation
\[
    k_1 \sim k_2,
    \iff
    \exists k \in \F_p^{\cross} \ \ \ \text{such that} \ \ \  k_1 k_2^{-1}= k^6,
\]
for $k_1, k_2 \in \F_p^{\cross}$.
The second condition above states that $k_1 k_2^{-1}$ is a sixth power in $\F_p^{\cross}$.
These equivalence classes, which all have size $(p-1)/6$, will be referred to as \emph{sextic residue classes}. For a prime $p\not\equiv1\bmod{6}$, we can still partition into sextic residue classes via the above equivalence relation, although there will not be six distinct equivalence classes.

From the above proposition, it follows that if $k_1, k_2 \in \F_p^{\cross}$ are in the same sextic residue class, then the curves $E_1: y^2 = x^3 + k_1$ and $E_2:y^2 = x^3 + k_2$ have the same number of $\F_p$-points.
Hence an elliptic curve $E: y^2=x^3+k$ (i.e., of $j$-invariant $0$) takes on one of at most six different orders for the group of points $E(\F_p)$, where the number of distinct orders depends on the prime $p$. The above discussion of sextic residue classes implies that when $p\equiv 1\bmod{6}$ (or equivalently, $p\equiv 1\bmod{3}$), six distinct orders are realized, whereas when $p\equiv 2\bmod{3}$, less than six distinct orders are realized. Below, we state in Theorem \ref{six_ord} a result of Gauss that gives these orders explicitly.
%
%The following theorem of Gauss (for an equivalent formulation, see \cite{IR}) tells us that when $p \equiv 1 \bmod 3$, \emph{all} six are realized as elliptic curves, uniquely determined by the sextic residue class of $k$, and when $p \equiv 2 \bmod 3$, the order of $E$ will always be $p+1$, for any $k$.
%
Before doing so, it is convenient to introduce the notation $a_E(p)$, for an elliptic curve $E: y^2 = x^3 +ax+b$, defind to be
\begin{equation}\label{eq:aEp}
a_{E}(p) \ := \ p \ - \  \#\{(x,y) \in (\Z/p\Z)^2 :y^2 \equiv x^3+ax + b \;(p)\} \ = \ - \sum_{x \bmod p} \legendre{x^3+ax+b}{p}.
\end{equation}

\begin{theorem}[Gauss, six orders for $j=0$ curves] \label{six_ord}

Let $E$ be the elliptic curve $y^2 = x^3+B$ (i.e., a curve with $j-$invariant $0$) with good reduction mod $p$.
\begin{enumerate}
    \item If $p \equiv 2 \bmod 3$, then $a_{E}(p)=0$.

    \item If $p \equiv 1 \bmod 3$, write $p$ uniquely as $p = a^2+3b^2$, where $a \equiv 2 \bmod 3$ and $b>0$. Then
\begin{align}
a_E(p)\ =\
\begin{cases}
-2a & \ \text{{\rm for $B$ a sextic residue}}    \bmod p\\
2a & \ \text{{\rm for $B$ a cubic, non-quadratic residue}} \bmod p\\
a \pm 3b & \ \text{{\rm for $B$ a quadratic, non-cubic residue}}\bmod p\\
-a \pm 3b & \ \text{{\rm for $B$ a non-quadratic, non-cubic residue}} \bmod p.
\end{cases}
\end{align}
\end{enumerate}
\end{theorem}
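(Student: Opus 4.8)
The plan is to treat the two residue classes of $p$ modulo $3$ separately. Part (1) is immediate: if $p\equiv 2\bmod 3$ then $\gcd(3,p-1)=1$, so $x\mapsto x^3$ is a bijection of $\F_p$, and therefore $a_E(p)=-\sum_{x\bmod p}\legendre{x^3+B}{p}=-\sum_{u\bmod p}\legendre{u+B}{p}=0$ by the linear Legendre evaluation in Lemma~\ref{lem:sumlegendrelinquad}. For part (2), with $p\equiv 1\bmod 3$, I would rewrite $a_E(p)$ as (minus twice the real part of) a Jacobi sum over $\F_p$ and then feed it into the classical arithmetic of such sums in $\Z[\omega]$, where $\omega=e^{2\pi i/3}$.

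For the rewriting, fix a cubic character $\chi_3$ on $\F_p^\times$ and let $\chi_2$ be the Legendre symbol modulo $p$. Using that a nonzero $u$ has exactly $1+\chi_3(u)+\overline{\chi_3}(u)$ cube roots, and discarding the trivial-character contribution via $\sum_{u\bmod p}\legendre{u+B}{p}=0$, one gets $-a_E(p)=\sum_u\chi_3(u)\legendre{u+B}{p}+\sum_u\overline{\chi_3}(u)\legendre{u+B}{p}$. Good reduction forces $p\nmid B$, so the substitution $u\mapsto Bu$ (with $\chi_3(-1)=1$) turns the first sum into $\chi_3(B)\chi_2(B)\,J(\chi_3,\chi_2)$, where $J(\chi_3,\chi_2)=\sum_{v\bmod p}\chi_3(v)\chi_2(1-v)$, and the second sum is its complex conjugate. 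Writing $\psi:=\chi_3\chi_2$, a character of exact order $6$, this gives the identity $a_E(p)=-2\,\mathrm{Re}\bigl(\psi(B)\,J(\chi_3,\chi_2)\bigr)$.

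From this the six-value structure follows. As $B$ runs through the six sextic residue classes of $\F_p^\times$, $\psi(B)$ runs through all sixth roots of unity, so $a_E(p)$ takes the six values $-2\,\mathrm{Re}(\zeta\,J(\chi_3,\chi_2))$, $\zeta^6=1$. Now $J(\chi_3,\chi_2)\in\Z[\omega]$ has absolute value $\sqrt p$; since every prime $p\equiv1\bmod3$ can be written $p=a^2+3b^2$, the element $\pi=a+b\sqrt{-3}$ is prime in $\Z[\omega]$, so $J(\chi_3,\chi_2)$ is a unit multiple of $\pi$ or of $\overline\pi$, and either way the six values above equal $-2\,\mathrm{Re}(\zeta\pi)$, $\zeta^6=1$, which a one-line computation shows to be $\pm2a$ and $\pm a\pm3b$. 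To assign the four named cases to the correct values I would evaluate $\psi(B)=\chi_3(B)\chi_2(B)$ and use the \emph{exact} value of $J(\chi_3,\chi_2)$, not merely its value up to units: once one knows $J(\chi_3,\chi_2)=\pi$ for the appropriate choice of $\chi_3$, a sextic residue $B$ has $\psi(B)=1$ and hence $a_E(p)=-2\,\mathrm{Re}(\pi)=-2a$; a cube that is not a square has $\psi(B)=-1$ and $a_E(p)=2a$; a square that is not a cube has $\psi(B)\in\{\omega,\omega^2\}$ and $a_E(p)=a\pm3b$; and $B$ neither has $\psi(B)\in\{-\omega,-\omega^2\}$ and $a_E(p)=-a\pm3b$.

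The hard part is precisely this last step: pinning down that $J(\chi_3,\chi_2)$ equals the canonically normalized generator $a+b\sqrt{-3}$ with $a\equiv2\bmod3$ (the sign of $b$ being fixed by the choice of $\chi_3$ versus $\overline{\chi_3}$, which exchanges $b$ and $-b$). This is Gauss's classical determination of the sign and primary form of these Jacobi (equivalently Gauss) sums, and is the one ingredient I would quote rather than reprove; see \cite{IR}. The same delicate point reappears in the alternative route via the complex multiplication of $y^2=x^3+B$ by $\Z[\omega]$ and its sextic twists, where one must still identify the Frobenius with its canonical generator. Granting this normalization, everything else is character bookkeeping together with the elementary computation of the preceding paragraph.
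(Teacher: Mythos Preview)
Your proof is correct, but note that the paper does not actually prove this theorem: it states the result as a classical theorem of Gauss and directs the reader to \cite{IR} for an equivalent formulation. So there is no paper proof to compare against; the result is quoted, not derived.

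That said, what you have written is essentially the standard argument one finds in Ireland--Rosen, so you are supplying precisely the proof the citation points to. Your treatment of part~(1) via the bijectivity of $x\mapsto x^3$ when $p\equiv 2\bmod 3$ is clean, and your reduction of part~(2) to $a_E(p)=-2\,\mathrm{Re}\bigl(\psi(B)J(\chi_3,\chi_2)\bigr)$ with $\psi=\chi_3\chi_2$ of order~$6$ is exactly right, as is your observation that the six sextic classes of $B$ give the six values $\pm 2a,\ \pm a\pm 3b$ once one knows $J(\chi_3,\chi_2)=a+b\sqrt{-3}$ in primary form. You are also correct to flag the normalization of the Jacobi sum (identifying it with the canonical generator having $a\equiv 2\bmod 3$) as the one genuinely nontrivial input, and to cite it rather than reprove it; this is the standard practice.
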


%(I made this example while trying to verify exactly how the $\pi$ map works for the sextic residue). However, if we don't actually ever need to distinguish between those residue classes and we're sure (?) that one corresponds to "+" and the other corresponds to "-", then it should be enough just to say that they can be canonically distinguished, and give a citation as I did below. Thoughts? - Ryan

%We note that as $B$ runs through the residues mod $p$, $B$ is a quartic residue $a_E(p)$ is equal to $\pm2a$ and $\pm 2b$ with equal quarter proportion.

We note that as $B$ runs through the values of $\F_p^{\times}$, $a_E(p)$ is equal to each of $\{\pm2a,\pm a \pm 3b\}$ with equal proportion, since the sextic residue classes equipartition $\F_p^{\times}$. The choices of signs in the expressions $a \pm 3b$ and $-a \pm 3b$ can be specified by an analog of the Legendre symbol for a fixed $\pi\in\Z[(1+\sqrt{-3})/2]$ lying over $p$, which will be specified by the choice of $a$ and $b$. See \cite[{Theorem 18.3.4}]{IR} for the statement and proof.

% --- for the analog with primes $\pi\in\Z[\sqrt{-1}]$
% in the quartic residue class case, see \cite{IK}.

%%%%%%%%%%%%%%%%%%%%%%%%%%%%%%%%%%%%%%%%%%%%%%%%%%%%%%
%%%%%%%%%%%%%%%%%%%%%%%%%%%%%%%%%%%%%%%%%%%%%%%%%%%%%%%%%%%%%%%%%%%%%%%%%%%%%%%%%%%%%%%%%%%%%%%%%%%%%%%%%%%%%%%%%%%%%%%%%%%%%%%%%%%%%%%%%%%%%%%%%%%%%%%%%%%%%%%%%%%%%%%%%%%%%%%%%%%%%%%%%%%%%%%%%%%%%%%%%%%%%%%%%%%%%%%%%%%%%%%%%%%%%%%%%%

\subsubsection{Elliptic Curves of $j$-invariant $1728$}
All elliptic curves with $j$-invariant $1728$ are of the form $E: y^2= x^3-Ax$. Similar to the $j=0$ case, the following lemma shows that the order of an elliptic curve group over $\F_p$ is determined by the quartic residue class of $A$.

\begin{lemma}[Proposition 3.1, \cite{BKNT}]\label{eq_quartic}
For any $t \in \F_p^{\times}$, $E_1: y^2 = x^3-Ax$ and $E_2: y^2 = x^3-t^4Ax$ have the same number of points over $\F_p$.
\end{lemma}

The following theorem of Gauss gives the explicit dependence of $a_E(p)$ on the quartic residue class of $A$, for $E$ an elliptic curve with $j$-invariant equal to $1728$.

\begin{theorem}[Gauss, four orders for $j=1728$ curves] \label{four_ord}
Let $E: y^2 = x^3-Ax$ (i.e., a $j=1728$ curve) with good reduction mod $p$.
\begin{enumerate}
    \item If $p \equiv 3 \bmod 4$, then $a_{E}(p)=0$.

    \item If $p \equiv 1 \bmod 4$, write $p$ as $p = a^2+b^2$, where $b$ is even and $a+b \equiv 1 \bmod 4$. Then
\begin{align}
    a_{E}(p)\ =\
    \begin{cases}
    2a &\text{{\rm for $A$ a quartic residue}}\bmod p\\
    -2a &\text{{\rm for $A$ a quadratic, non-quartic residue}}\bmod p\\
    \pm 2b &\text{{\rm for $A$  a  non-quadratic residue}}\bmod p.
    \end{cases}
\end{align}
\end{enumerate}
\end{theorem}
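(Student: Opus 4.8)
The plan is to handle part (1) by an elementary symmetry and to derive part (2) from the evaluation of a quartic Jacobi sum attached to $p$. Throughout set $\rho=\legendre{\cdot}{p}$ and recall that by definition $a_E(p)=-\sum_{x\bmod p}\legendre{x^3-Ax}{p}$. For \textbf{(1)}, if $p\equiv 3\bmod 4$ the curve is supersingular, and this is immediate: the substitution $x\mapsto -x$ carries $x^3-Ax$ to $-(x^3-Ax)$, so the character sum is multiplied by $\rho(-1)=-1$, whence $a_E(p)=-a_E(p)=0$.

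For \textbf{(2)}, assume $p\equiv 1\bmod 4$, so $i\in\F_p$ and $(x,y)\mapsto(-x,iy)$ is an automorphism of $E$; thus $E$ has complex multiplication by $\Z[i]$, its Frobenius is some $\pi\in\Z[i]$ with $\pi\overline\pi=p$, and $a_E(p)=\pi+\overline\pi=2\operatorname{Re}\pi$ is automatically even. This already explains the shape of the answer: by Lemma~\ref{eq_quartic} we may fix one representative $A$ in each quartic residue class, the four corresponding curves are quartic twists of one another, twisting multiplies $\pi$ by a power of the unit $i$, and the traces run through $\{2\operatorname{Re}\pi,2\operatorname{Re}(i\pi),2\operatorname{Re}(-\pi),2\operatorname{Re}(-i\pi)\}=\{\pm2a,\pm2b\}$ for $\pi=a+bi$. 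To identify which class yields which trace I would make $\pi$ explicit via Jacobi sums. Parametrising the affine part $x\neq 0$ by $w=y/x$ turns the equation into $x^2-w^2x-A=0$, of discriminant $w^4+4A$, so that $a_E(p)=-1-\sum_{w\bmod p}\legendre{w^4+4A}{p}$. Writing $\#\{w:w^4=u\}=\sum_{j=0}^3\chi^j(u)$ for $u\neq 0$, where $\chi$ is a character mod $p$ of exact order $4$ and $\rho=\chi^2$, then expanding and applying the substitution $u\mapsto -4Au$, the $j=0$ contribution cancels the $w=0$ term and one is left with $\sum_{j=1}^3\chi^j(-4A)\,\rho(4A)\,J(\chi^j,\rho)$, where $J(\psi,\rho)=\sum_s\psi(s)\rho(1-s)$.

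Now I would evaluate the three surviving terms. The $j=2$ term is elementary: $\rho(-1)\,J(\rho,\rho)=-1$ since $p\equiv1\bmod4$. Setting $\eta:=\chi(-1)\rho(2)\in\{\pm1\}$, a short computation gives $\chi(-4A)\rho(4A)=\eta\,\overline\chi(A)$ and $\chi^3(-4A)\rho(4A)=\eta\,\chi(A)$; since also $J(\chi^3,\rho)=\overline{J(\chi,\rho)}$, the $j=1$ and $j=3$ terms are complex conjugates of one another, and altogether $a_E(p)=-2\eta\operatorname{Re}\!\big(\overline\chi(A)\,J(\chi,\rho)\big)$. Since $J(\chi,\rho)\in\Z[i]$ with $|J(\chi,\rho)|^2=p$, write $J(\chi,\rho)=\alpha+\beta i$ with $\alpha^2+\beta^2=p$; reading off $\overline\chi(A)\in\{1,-1,\pm i\}$ yields $a_E(p)\in\{-2\eta\alpha,\ 2\eta\alpha,\ \pm2\eta\beta\}$ according as $A$ is a quartic residue, a quadratic non-quartic residue, or a quadratic non-residue — which is the claimed trichotomy, with $\{|\alpha|,|\beta|\}=\{|a|,|b|\}$.

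The \textbf{main obstacle}, and really the only nontrivial step, is the normalisation: proving that $-\eta\alpha$ is precisely the integer $a$ singled out by ``$b$ even and $a+b\equiv1\bmod4$.'' This requires (i) that $\alpha$ is odd and $\beta$ is even in $J(\chi,\rho)=\alpha+\beta i$, and (ii) the residue of $-\eta\alpha$ modulo $4$. Both come out of the classical congruence $J(\chi,\chi)\equiv-1\bmod(1+i)^3$ for a suitably normalised quartic residue symbol, combined with $J(\chi,\rho)=\chi(-1)J(\chi,\chi)$ and the evaluation of $\rho(2)=\legendre{2}{p}$ in terms of the parity of $b/2$ — that is, the supplementary laws of biquadratic reciprocity. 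I would carry out this congruence bookkeeping in the quartic-residue case ($\overline\chi(A)=1$) and deduce the remaining three cases at once from the formula for $a_E(p)$ above by multiplying $J(\chi,\rho)$ by the relevant power of $i$; alternatively one simply cites the equivalent statement in \cite{IR} or \cite{IK}.
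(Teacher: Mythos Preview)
The paper does not actually prove this theorem: it simply records the statement and writes ``For a proof, see \cite{Wash}.'' So there is no in-house argument to compare against; the result is quoted as classical.

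Your proposal is a correct and self-contained outline of one of the standard proofs. Part~(1) via $x\mapsto -x$ is exactly the right one-line argument. For part~(2), the reparametrisation $w=y/x$ giving $a_E(p)=-1-\sum_w\rho(w^4+4A)$, the expansion of the fourth-power indicator by $\sum_{j=0}^3\chi^j$, the substitution $u\mapsto -4Au$, the identification of the $j=2$ term as $\rho(-1)J(\rho,\rho)=-1$ (which cancels the stray $-1$), and the pairing of the $j=1,3$ terms into $-2\eta\,\mathrm{Re}(\overline\chi(A)J(\chi,\rho))$ are all verified by direct computation. Your identification of the normalisation step as the crux is accurate: pinning down which associate of $J(\chi,\rho)$ corresponds to the $(a,b)$ with $b$ even and $a+b\equiv 1\bmod 4$ is precisely the content of the biquadratic congruence $J(\chi,\chi)\equiv -1\bmod (1+i)^3$ together with $J(\chi,\rho)=\chi(-1)J(\chi,\chi)$, both of which hold as you state. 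This Jacobi-sum route is essentially the argument in \cite{IR} (adapted from the cubic to the quartic case) and is equivalent to what one finds in \cite{Wash}, so your approach is in the same spirit as the reference the paper cites, just made explicit rather than deferred.
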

For proofs, see \cite[{Theorem 18.4.5}]{IR} or \cite{Wash}. Again, the sign in the expression $\pm 2b$ may be specified via a quartic residue symbol in the ring $\Z[i]$.
We note that as $A$ runs through the values of $\F_p^{\times}$, $a_E(p)$ is equal to each of the elements of the set $\{\pm2a,\pm 2b\}$ with equal proportion--- this is because the cosets of the image of the fourth-power map evenly partition $\F_p^{\times}$.

%%%%%%%%%%%%%%%%%%%%%%%%%%%%%%
%%%%%%%%%%%%%%%%%%%%%%%%%%%%%%
%%%%%%%%%%%%%%%%%%%%%%%%%%%%%%

\subsection{\texorpdfstring{$k$\textsuperscript{th}}{kth} Moments of the family \texorpdfstring{$\mathcal{E}^r(T): y^2=x^3-T^rAx$}{Er(T): y2=x3-TrAx}} \label{j1728_moms}

We now determine the $k$\textsuperscript{th} moment at $p$ of the elliptic curve family $\mathcal E^r(T): y^2=x^3-T^rAx$ for any $k, r \in \N$ and $A \neq 0$.
These families all have constant $j$-invariant $j(T) = 1728$.
Note that it suffices to consider the equivalence class of $r \bmod 4$, since $y^2 = x^3 -T^rAx$ and $y^2 = x^3 -T^{r+4}Ax$ have the same number of $\F_p$ points by Lemma \ref{eq_quartic}. Thus, for $r \in \{0,1,2,3\}$, we may define the quantity
\[
    A_{k, \mathcal{E}^{[r]_4}}(p) := A_{k, \mathcal{E}^{r'}}(p),
\]
which gives the $k^{\mathrm{th}}$ moment at $p$ for any $r' \in \N$ such that $r' \equiv r \bmod 4$.

Our main result is the following.

\begin{theorem} \label{thm:j1728_main}
For $r \in \N$ and $A \neq 0$, consider the elliptic curve family $\mathcal{E}^r(T): y^2 = x^3 -T^r Ax$.
\begin{enumerate}
    \item If $p \equiv 3 \bmod 4$, then for all $k, r \in   \N$,  we have
    \begin{align}
        A_{k, \mathcal{E}^r}(p)\ =\ 0.
    \end{align}

    \item For $p \equiv 1 \bmod 4$, we have the following results:
    \begin{align}
        &A_{k, \mathcal E^{[0]_4}}(p)
            \ =\
            \begin{cases}
                (p-1)(2a)^k
                &\text{{\rm for} $A$  {\rm a quartic residue}}\bmod p\\
                (p-1)(-2a)^k
                &\text{{\rm for} $A$  {\rm a quadratic, non-quartic residue}}\bmod p\\
                 (p-1)(\pm2b)^k
                 &\text{{\rm for} $A$  {\rm a non-quadratic residue}}\bmod p,
            \end{cases}
            \label{eq:1728_0}
            \\
        &A_{k, \mathcal E^{[1]_4}}(p)
            \ = \
            A_{k,\mathcal{E}^{[3]_4}}(p)
            \ = \
            \begin{cases}
            (p-1)2^{k-1}(a^k+b^k)& \text{{\rm for $k$ even}}\\
            0 & \text{{\rm for $k$ odd}},
            \end{cases}
            \label{eq:1728_1,3}
            \\
        &\text{ and finally,} \nonumber
           \\
        &A_{k, \mathcal E^{[2]_4}}(p)
            \ =\
            \begin{cases}
                (p-1)(2a)^k& \text{{\rm for $k$ even}}, \text{{\rm $A$  a quadratic residue}}\bmod p\\
                (p-1)(2b)^k& \text{{\rm for $k$ even}}, \text{{\rm $A$ a non-quadratic residue}}\bmod p\\
                0 & \text{{\rm for $k$ odd}},
                \label{eq:1728_2}
        \end{cases}
    \end{align}
\end{enumerate}
where $a$, $b$, and the plus/minuses in \eqref{eq:1728_0} are given by Theorem \ref{four_ord}.
\end{theorem}

\begin{remark}
We can trivially extend these results to any $\mathcal{E}(T): y^2 = x^3 - (cT+d)^r Ax$, since $T\mapsto cT+d$ is a bijection of $\F_p$ for $p\nmid c$.
\end{remark}

\begin{proof}[Proof of Theorem \ref{thm:j1728_main}]
   If $p \equiv 3 \bmod 4$, then by Theorem \ref{four_ord}, every  elliptic curve $E$ with $j = 1728$ has $a_E(p) = 0$; thus,
it follows trivially that $A_{k,\mathcal{E}^r} = 0$ for all $k, r \in \N$.

It remains to consider the $p \equiv 1 \bmod 4$ case. For organizational clarity, we prove equation \eqref{eq:1728_0} in Section \ref{j1728_triv},
equation \eqref{eq:1728_1,3} in Sections \ref{sec:1728_1proof} and \ref{sec:1728_3proof}, and equation \eqref{eq:1728_2} in Section \ref{sec:1728_2proof}.
\end{proof}

Per the above, we will always consider $p \equiv 1 \bmod 4$ for the rest of the calculations in this subsection.
It will often be convenient to define $a_p(x^3+ax +b) := a_{E}(p)$, for $E:y^2=x^3+ax+b$ and $a_E(p)$ defined as in \eqref{eq:aEp}. Then, we can write
\begin{align}\label{four_kmom}
    A_{k, \mathcal E^r}(p)\ =\ \sum_{T=1}^{p-1} a_p(x^3-T^rAx)^k \  = \  \sum_{T \in \F_p^{\times}} a_p(x^3-T^rAx)^k.
\end{align}

%%%%%%%%%%%%%%%%%%%%%%%%%%%%%%%%%%%%%%%%%%%%%%%%%%%%%%%%%%%%%%%%%%
%%%%%%%%%%%%%%%%%%%%%%%%%%%%%%%%%%%%%%%%%%%%%%%%%%%%%%%%%%%%%%%%%%
%%%%%%%%%%%%%%%%%%%%%%%%%%%%%%%%%%%%%%%%%%%%%%%%%%%%%%%%%%%%%%%%%%
\subsubsection{Proof of equation \eqref{eq:1728_0}: $r\equiv 0 \bmod 4$} \label{j1728_triv}
In the $r\equiv 0 \bmod 4$ case, we need only consider the constant family $\mathcal{E}^0(T):y^2=x^3-Ax$. Then $A_{k, \mathcal E^0}(p)\ =\  (p-1)a_{\mathcal E}(p)^k$, and so equation \eqref{eq:1728_0} follows from Theorem \ref{four_ord}.
%We remark that if $A$ is a non-residue mod $p$, we only exactly know the case when $k$ is even, where $A_{k}(p)=(p-1)(2b)^k$. That is, if $k$ is odd, we have $A_{k}(p)=(p-1)(\pm 2b)^k$, with no clear way of distinguishing between the signs.
%EDIT there is a way using the power map

%%%%%%%%%%%%%%%%%%%%%%%%%%%%%%%%%%%%%%%%%%%%%%%%%%%%%%%%%%%%%%%%%%%
%%%%%%%%%%%%%%%%%%%%%%%%%%%%%%%%%%%%%%%%%%%%%%%%%%%%%%%%%%%%%%%%%%%
%%%%%%%%%%%%%%%%%%%%%%%%%%%%%%%%%%%%%%%%%%%%%%%%%%%%%%%%%%%%%%%%%%%

%%%%%%%%%%%%%%%%%%%%%%%%%%%%%%%%%%%%%%%%%%%%%%%%%%%%%%%%%%%%%%%%%%
%%%%%%%%%%%%%%%%%%%%%%%%%%%%%%%%%%%%%%%%%%%%%%%%%%%%%%%%%%%%%%%%%%
%%%%%%%%%%%%%%%%%%%%%%%%%%%%%%%%%%%%%%%%%%%%%%%%%%%%%%%%%%%%%%%%%%
\subsubsection{Proof of equation \eqref{eq:1728_1,3}: $r \equiv 1 \bmod 4$} \label{sec:1728_1proof}
We examine $\mathcal{E}^1(T): y^2 = x^3-TAx$. Note that $TA$ runs through the elements of $\F_p^{\times}$ as $T$ does, and since $(p-1)/4 \in \Z$, $a_p(x^3-TAx)$ takes on each of the values given in Theorem \ref{four_ord} a total of $(p-1)/4$ times.  Substituting into \eqref{four_kmom} gives
\begin{align}
    A_{k, \mathcal E^1}(p) &\ = \  \frac{p-1}{4} \left((2a)^k + (-2a)^k +(2b)^k +(-2b)^k\right) \nonumber \\
    &\ = \  \begin{cases}
    (p-1)2^{k-1}(a^k+b^k)& \text{for $k$ even}\\
    0 & \text{for $k$ odd,}
    \end{cases}
\end{align}
as desired.

%%%%%%%%%%%%%%%%%%%%%%%%%%%%%%%%%%%%%%%%%%%%%%%%%%%%%%%%%%%%%%%%%%
%%%%%%%%%%%%%%%%%%%%%%%%%%%%%%%%%%%%%%%%%%%%%%%%%%%%%%%%%%%%%%%%%%
%%%%%%%%%%%%%%%%%%%%%%%%%%%%%%%%%%%%%%%%%%%%%%%%%%%%%%%%%%%%%%%%%%

\subsubsection{Proof of equation \eqref{eq:1728_1,3}: $r \equiv 3 \bmod 4$}
\label{sec:1728_3proof}
We now examine $\mathcal{E}^3(T): y^2=x^3-T^3Ax$. Select a generator $g$ of $\F_p^{\cross}$. Since $A \neq 0$, we have  $A\equiv g^m\bmod{p}$ for some unique $m \in \{1,\dots, p-1\}$. The reduction of the set  $\{T^3A:1\leq T\leq p-1\}$ modulo $p$ may then be written as $\{g^{3t+m}:1\leq t\leq p-1\}$.

By Theorem \ref{four_ord}, we know $a_p(x^3-T^3 Ax)$ depends only on the ``quartic coset'' of $T^3A$ modulo $p$; in other words, there is a unique $i \in \{0,1,2,3\}$ for which $T^3 A \in g^i (\F_p^\times)^4$ and $a_p(x^3-T^3 Ax)$ depends only on $i$. Moreover, $g^{3t+m}$ and $g^{3t'+m}$ lie in the same quartic coset if and only if $t \equiv t' \bmod{4}$.
Since $p \equiv 1 \bmod{4}$, the congruence classes of $t$ modulo $4$ partition the set $\{t:1\leq t\leq p-1\}$ equally into four sets, each of size $(p-1)/4$. We conclude that the elements of the set $\{T^3A: 1 \leq T\leq p-1\}$ are uniformly distributed among all four quartic cosets. We may then compute as in the $r \equiv 1\pmod{4}$ case to find that
%We now examine $\mathcal{E}^3(T): y^2=x^3-T^3Ax$. Note that if $p \equiv 2 \bmod 3$, then every element in $\F_p^{\times}$ is a cubic residue, in which case the moments reduce to the $r\equiv1 \pmod{4}$ case.
%If $p \equiv 1 \bmod 3$, then $12$ divides $|\F_p^{\cross}|$. Select a generator $g$ of $\F_p^{\cross}$. Since $A \neq 0$, we have  $A\equiv g^m\bmod{p}$ for some $m \in \{1,\dots, p-1\}$.
%The reduction of the set  $\{T^3A:1\leq T\leq p-1\}$ mod $p$ may then be written as $\{g^{3t+m}:1\leq t\leq p-1\}$. Note that $g^{3t+m}\equiv g^{3t'+m}\bmod{p-1}$ if and only if $3(t- t') \equiv 0 \bmod 12$, and this is true if and only if $t \equiv t' \bmod 4$.
%Since $12$ divides $p-1$, the congruence classes modulo $4$ divide the set $\{t:1\leq t\leq p-1\}$ into four sets of equal size $(p-1)/4$, and thus the elements of the set $\{T^3A: 1 \leq T\leq p-1\}$ are uniformly distributed among all four quartic cosets. Therefore, we are again in the $r=1$ case. Hence in all cases,
\begin{align}
A_{k, \mathcal E^3}(p)\ =\ A_{k, \mathcal E^1}(p).
\end{align}
This shows equation \eqref{eq:1728_1,3}.

\subsubsection{Proof of equation \eqref{eq:1728_2}: $r \equiv 2 \bmod 4$}
\label{sec:1728_2proof}

We examine $\mathcal{E}^2(T): y^2 = x^3-T^2Ax$. If $A$ is a quadratic residue, then $T^2A$ is a quadratic residue for all $T\in\F_p^\times$. Moreover, writing $A \equiv a^2 \bmod p$, we have $T^2A$ is a quartic residue whenever $\legendre{T}{p} = \legendre {a}{p}$, which occurs half of the time. Thus, from \eqref{four_kmom}  and Theorem \ref{four_ord}, we have
\begin{align}
    \label{specificMomentone}
    A_{k, \mathcal E^2}(p) \ =\ \frac{p-1}{2} \left((2a)^k + (-2a)^k\right)\ =\ \begin{cases}
    (p-1)(2a)^k& \text{for $k$  even}\\
    0 & \text{for $k$  odd.}
    \end{cases}
\end{align}
On the other hand, if $A$ is a quadratic non-residue, then $T^2A$ is a quadratic non-residue for all $T\in\F_p^\times$, with half of these values in each non-quadratic coset of the image of the fourth-power map $\phi: x \mapsto x^4$ (fix a generator $g\in \F_p^{\times}$--- the non-quadratic cosets are $g \text{Im}(\phi)$ and $g^3 \text{Im}(\phi)$; the quadratic cosets are $\text{Im}(\phi)$ and $g^2 \text{Im}(\phi)$). What this means is that $a_p$ takes on each value in the ``non-quadratic'' case of Theorem \ref{four_ord} half of the time as $T$ runs through $\F_p^{\times}$. Thus,
\begin{align}
A_{k, \mathcal E^2}(p)\ = \ \frac{p-1}{2}\left((2b)^k+(-2b)^k\right)\ =\ \begin{cases}
    (p-1)(2b)^k& \text{for $k$  even}\\
    0 & \text{for $k$  odd.}
    \end{cases}
\end{align}
Hence, we have shown equation \eqref{eq:1728_2}.
This concludes the proof of Theorem \ref{thm:j1728_main}

%%%%%%%%%%%%%%%%%%%%%%%%%%%%%%%%%%%%%%%%%%%%%%%%%%%%%%%%%%%%%%%%%%
%%%%%%%%%%%%%%%%%%%%%%%%%%%%%%%%%%%%%%%%%%%%%%%%%%%%%%%%%%%%%%%%%%
%%%%%%%%%%%%%%%%%%%%%%%%%%%%%%%%%%%%%%%%%%%%%%%%%%%%%%%%%%%%%%%%%%

\subsection{\texorpdfstring{$k$\textsuperscript{th}}{kth} Moments of the family \texorpdfstring{$\mathcal{E}^r(T): y^2=x^3+T^rB$}{Er: y2=x3-TrB}} \label{j0_moms}

We now analyze  families of elliptic curves of the form $\mathcal E^r(T): y^2=x^3+T^rB$, for all $r \in \N$.  These families have constant $j$-invariant $j(T) = 0$. Similar to the $j=1728$ case, note that it suffices to consider $r$ modulo $6$ by Lemma \ref{eq_sextic}.
Thus, for $r \in \{0,1,\dots, 5\}$, we may consider the quantity $A_{k, \mathcal{E}^{[r]_6}}(p) := A_{k, \mathcal{E}^{r'}}(p)$,  where $r' \equiv r \bmod 6$.
Following equation \eqref{four_kmom}, we may write
\begin{equation}\label{six_kmom}
   A_{k,\mathcal{E}^{[r]_6}}(p)
   \ = \
   A_{k,\mathcal{E}^{r}}(p)
   \ = \
   \sum_{T=0}^{p-1} a_p(x^3+T^rB)^k \ =\ \sum_{T \in \F_p^{\times}} a_p(x^3+T^rB)^k.
\end{equation}

Our main result is the following.

\begin{theorem} \label{thm:j0_main}
For $r \in \N$, consider the elliptic curve family $\mathcal{E}^r(T): y^2 = x^3 +T^r B$. If $p \equiv 2 \bmod 3$, then for all $k, r \in \N$,
\begin{align}
    A_{k, \mathcal{E}^r}(p) = 0.
\end{align}
If $p \equiv 1 \bmod 3$, we have the following results:
\begin{align}
    A_{k, \mathcal E^{[0]_6}}(p)
&\ =\
\begin{cases}
(p-1)(-2a)^k & \text{{\rm for $B$ a sextic residue}}\bmod p\\
(p-1)(2a)^k & \text{{\rm for $B$ a cubic, non-quadratic residue}}\bmod p\\
(p-1)(a \pm 3b)^k & \text{{\rm for $B$ a quadratic, non-cubic  residue}}\bmod p\\
(p-1)(- a \pm 3b)^k & \text{{\rm for $B$ a non-quadratic, non-cubic residue}} \bmod p,
\end{cases}
    \label{eq:j0_0}
    \\
    A_{k,\mathcal{E}^{[1]_6}}(p)
    &\ =\
    A_{k,\mathcal{E}^{[5]_6}}(p)
    \ =\
    \begin{cases}
        \frac{p-1}{3}\left ( (2a)^k + (a-3b)^k + (a+3b)^k \right )
        &\text{{\rm for $k$ even}}\\
        0 & \text{{\rm for $k$ odd}},
    \end{cases}
    \label{eq:j0_15}
    \\
    A_{k,\mathcal{E}^{[2]_6}}(p)
    &\ = \
    A_{k,\mathcal{E}^{[4]_6}}(p) \nonumber\\
    &
    \ = \
    \begin{cases}
        \frac{p-1}{3} \left ((-2a)^k +(a-3b)^k +(a+3b)^k \right )
        &\text{{\rm for $B$ a quadratic residue}}
        \\
        \frac{p-1}{3} \left ((2a)^k +(-a-3b)^k +(-a+3b)^k \right )
        &\text{{\rm otherwise}}
    \end{cases}
    \label{eq:j0_24}
    \\
    \text{and finally,}&
    \nonumber
    \\
    A_{k,\mathcal{E}^{[3]_6}}(p)
    & \ = \
    \begin{cases}
        (p-1)(2a)^k
        &\text{{\rm for $B$ a cubic residue, $k$ is even}}
        \\
        0 &\text{{\rm for $B$ a cubic residue, $k$ is odd}}
        \\
        \frac{p-1}{2} \left( (a\pm 3b)^k + (-a \mp 3b)^k \right)
        &\text{for $B$ a cubic non-residue,}
    \end{cases}
    \label{eq:j0_3}
\end{align}
where $a$, $b$, and the plus/minuses in \eqref{eq:j0_0} and \eqref{eq:j0_3} are given by Theorem \ref{six_ord}.

\end{theorem}

\begin{remark}
Just as before, it is trivial to extend these results to any $\mathcal{E}(T): y^2 = x^3 + (cT+d)^r B$.
\end{remark}

\begin{proof}[Proof of Theorem \ref{thm:j0_main}]
If $p \equiv 2 \bmod 3$, then by Theorem \ref{six_ord},
every elliptic curve $E$ with $j=0$ has $a_E(p) = 0$; thus, it follows trivially that $A_{k,\mathcal{E}^r}(p) = 0$ for all $k, r \in \N$.

For the $p \equiv 1 \bmod 3$ case, similar to the organization in Section \ref{j1728_moms},
we prove equation \eqref{eq:j0_0} in Section \ref{sec:j0_0}, equation \eqref{eq:j0_15} in Section \ref{sec:j0_1}, equation \eqref{eq:j0_24} in Section \ref{sec:j0_2}, and equation \eqref{eq:j0_3} in Section \ref{sec:j0_3}.

\end{proof}

%%%%%%%%%%%%%%%%%%%%%%%%%%%%%%%%%%%%%%%%%%%%%%%%%%%%%%%%%%%%%%%%%%
%%%%%%%%%%%%%%%%%%%%%%%%%%%%%%%%%%%%%%%%%%%%%%%%%%%%%%%%%%%%%%%%%%
%%%%%%%%%%%%%%%%%%%%%%%%%%%%%%%%%%%%%%%%%%%%%%%%%%%%%%%%%%%%%%%%%%
\subsubsection{Proof of equation \eqref{eq:j0_0}: $r \equiv 0 \bmod 6$}
\label{sec:j0_0}
In the $r \equiv 0 \bmod 6$ case, we need only consider the constant family $\mathcal{E}^0(T): y^2 = x^3 +B$. Then $A_{k, \mathcal E^0}(p) = (p-1)a_{p}(x^3+B)^k$, and so
Theorem \ref{six_ord} gives \eqref{eq:j0_0}.

%We remark that if $A$ is a cubic non-residue mod $p$, we have no clear way of distinguishing between the signs of $\pm a \pm 3b$.

%%%%%%%%%%%%%%%%%%%%%%%%%%%%%%%%%%%%%%%%%%%%%%%%%%%%%%%%%%%%%%%%%%
%%%%%%%%%%%%%%%%%%%%%%%%%%%%%%%%%%%%%%%%%%%%%%%%%%%%%%%%%%%%%%%%%%
%%%%%%%%%%%%%%%%%%%%%%%%%%%%%%%%%%%%%%%%%%%%%%%%%%%%%%%%%%%%%%%%%%
\subsubsection{Proof of equation \eqref{eq:j0_15}: $r \equiv 1, 5 \bmod 6$}
\label{sec:j0_1}
We examine $\mathcal{E}^1(T): y^2 = x^3+TB$.
Since $TB$ runs through all elements of
$\F_p^{\times}$ and since
$\frac{p-1}{6} \in \Z$, equation \eqref{six_kmom} yields
\begin{align}
    A_{k,\mathcal{E}^1}(p) &\ = \  \frac{p-1}{6} \left ((2a)^k + (-2a)^k +(a-3b)^k +(a+3b)^k+(-a+3b)^k +(-a-3b)^k \right )\nonumber\\
    &\ = \  \begin{cases}
    \frac{p-1}{3}\left ( (2a)^k + (a-3b)^k + (a+3b)^k \right )
    &\text{for $k$ even}\\
    0 & \text{for $k$ odd.}
    \end{cases}
\end{align}
We note that the case $r\equiv 5 \bmod 6$ reduces to that of  $r\equiv 1 \bmod 6$ via the isomorphism $T\mapsto T^{-1}$ on $\F_p^{\times}$.
This shows \eqref{eq:j0_15}.

%%%%%%%%%%%%%%%%%%%%%%%%%%%%%%%%%%%%%%%%%%%%%%%%%%%%%%%%%%%%%%%%%%
%%%%%%%%%%%%%%%%%%%%%%%%%%%%%%%%%%%%%%%%%%%%%%%%%%%%%%%%%%%%%%%%%%
%%%%%%%%%%%%%%%%%%%%%%%%%%%%%%%%%%%%%%%%%%%%%%%%%%%%%%%%%%%%%%%%%%
\subsubsection{Proof of equation \eqref{eq:j0_24}: $r \equiv 2, 4 \bmod 6$}
\label{sec:j0_2}
We examine $\mathcal{E}^2(T): y^2 = x^3+T^2B$. Suppose $B$ is a quadratic residue. Then $T^2B$ is always a quadratic residue, runs through each of the quadratic residues of
$\F_p^{\times}$ twice, and is a sextic residue one-third of the time. To see this, fix a generator $g \in \F_p^{\times}$, denote the cosets of the image of the sixth power map by $\{[g^{6c}]$, $[g^{6c+1}]$, $[g^{6c+2}]$, $[g^{6c+3}]$, $[g^{6c+4}]$, $[g^{6c+5}]\}$. Squaring gives $\{[g^{6c}]$, $[g^{6c+2}]$, $[g^{6c+4}]\}$, the three quadratic residue classes. Thus, from Theorem \ref{six_ord}, we see that the values of $a_p$ are split uniformly among $-2a, a-3b$, and $a+3b$.
Indeed, $\frac{p-1}{3}$ is an integer, and so we have
\begin{align}
    A_{k,\mathcal{E}^2}(p)\ =\ \frac{p-1}{3} \left ((-2a)^k +(a-3b)^k +(a+3b)^k \right)
\end{align}
for $B$ a quadratic residue.

On the other hand, suppose $B$ is a quadratic non-residue. Then $T^2B$ is never a quadratic residue, while it is a cubic residue for one-third of the values of $T$. By the same argument as above, we have
\begin{align}
    A_{k,\mathcal{E}^2}(p)\ =\ \frac{p-1}{3} \left ((2a)^k +(-a-3b)^k +(-a+3b)^k \right).
\end{align}
We note that the case $r\equiv 4 \bmod 6$ reduces to that of  $r\equiv 2 \bmod 6$ via the isomorphism $T\mapsto T^{-1}$ on $\F_p^{\times}$.
This shows equation \eqref{eq:j0_24}.

%%%%%%%%%%%%%%%%%%%%%%%%%%%%%%%%%%%%%%%%%%%%%%%%%%%%%%%%%%%%%%%%%%
%%%%%%%%%%%%%%%%%%%%%%%%%%%%%%%%%%%%%%%%%%%%%%%%%%%%%%%%%%%%%%%%%%
%%%%%%%%%%%%%%%%%%%%%%%%%%%%%%%%%%%%%%%%%%%%%%%%%%%%%%%%%%%%%%%%%%
\subsubsection{Proof of equation \eqref{eq:j0_3}: $r \equiv 3 \bmod 6$}
\label{sec:j0_3}
We examine $\mathcal{E}^3(T): y^2 = x^3+T^3B$. If $B$ is a cubic residue, then $T^3B$ is either a sextic or cubic residue, with each case occurring with equal proportion over the values of $T$. Thus,
    \begin{align}
        A_{k,\mathcal{E}^3}(p)\ =\ \frac{p-1}{2} \left( (-2a)^k + (2a)^k \right)\ =\ \begin{cases}
    (p-1)(2a)^k& k\text{ is even}\\
    0 & k\text{ is odd.}
    \end{cases}
\end{align}
On the other hand, if $B$ is a cubic non-residue then
    \begin{align}
        A_{k,\mathcal{E}^3}(p)\ =\ \frac{p-1}{2} \left( (a\pm 3b)^k + (-a \mp 3b)^k \right), \label{bad_T3} %TODO TO_DO probably change this tag maybe
    \end{align}
%    but we cannot distinguish between the plus or minus signs. However, if we fix a generator $g \in \F_p^{\times}$, note that the values $T^3A$ is split evenly between EITHER $[g^{6t+2}]$ and $[g^{6t+5}]$ OR $[g^{6t+1}]$ and $[g^{6t+4}]$. I suspect that somehow, the orders which correspond to these cosets are additive inverses of one another, i.e., (from Theorem \ref{six_ord}) $a-3b$ paired with $-a+3b$ and $a+3b$ paired with $-a-3b$. Note that I wrote the $\pm$, $\mp$ in (\ref{bad_T3}) to suggest this.
where the plus/minus signs are determined by Theorem \ref{six_ord}. This shows \eqref{eq:j0_3}, thereby concluding the proof of Theorem \ref{thm:j0_main}.

%%%%%%%%%%%%%%%%%%%%%%%%%%%%%%%%%%%%%%%%%%%%%%%%%%%%%%%%%%%%%%%%%%
%%%%%%%%%%%%%%%%%%%%%%%%%%%%%%%%%%%%%%%%%%%%%%%%%%%%%%%%%%%%%%%%%%
%%%%%%%%%%%%%%%%%%%%%%%%%%%%%%%%%%%%%%%%%%%%%%%%%%%%%%%%%%%%%%%%%%
%\subsubsection{$r \equiv 4,5 \bmod 6$}
%We note that the cases $r\equiv 4(6)$ and $r\equiv 5(6)$ reduce to that of $r\equiv 2(6)$ and $r\equiv 1(6)$, respectively, via the isomorphism $T\mapsto T^{-1}$ on $\F_p^{\times}$.

%%%%%%%%%%%%%%%%%%%%%%%%%%%%%%%%%%%%%%%%%%%%%%%%%%%%%%%%%%%%%%%%%%%
%%%%%%%%%%%%%%%%%%%%%%%%%%%%%%%%%%%%%%%%%%%%%%%%%%%%%%%%%%%%%%%%%%%
%%%%%%%%%%%%%%%%%%%%%%%%%%%%%%%%%%%%%%%%%%%%%%%%%%%%%%%%%%%%%%%%%%%

\subsection{Computing the \texorpdfstring{$k$\textsuperscript{th}}{kth} moment for a family of any constant \texorpdfstring{$j(T)-$}{j(T)-}invariant}
\label{sec:jarb}

For any constant $j(T)$, the $k$\textsuperscript{th} moment of the elliptic curve family $\mathcal{E}(T): y^2 \ = \ x^3+T^2Ax + T^3B$ can be computed for any $k$, where $j(T) \ =\ 1728\cdot 4A^3/(4A^3+27B^2)$.
\begin{theorem}[$j(T) \neq 0,1728$] \label{thm:jarb}
Consider the elliptic curve family
$
\mathcal{E}(T): y^2\ =\ x^3 + T^2 Ax + T^3 B
$.
We have
\begin{align}
A_{k,\mathcal{E}(T)}(p)\ =\
    \begin{cases}
        (p-1)a_{\mathcal{E}(1)}(p)^k &\text{{\rm for $k$ even}}\\
        0 &\text{{\rm for $k$ odd}}.
    \end{cases}
\end{align}

\end{theorem}

The proof follows from the following lemma, which is well-known.
\begin{lemma} \label{twist_sum}
Fix $d \in \Z$ such that $\legendre{d}{p} = -1$, and consider the elliptic curves $E: y^2 = x^3 + Ax+B$ and $\tilde{E}: y^2 = x^3 +d^2Ax+d^3B$. Then
\[
a_{\tilde{E}}(p) = -a_E(p).
\]
\end{lemma}
\begin{proof}[Proof of Lemma \ref{twist_sum}]
Note that the curve
\begin{align}
dy^2 = x^3 + Ax+B \label{twistE}
\end{align}
has Weierstrass form $y^2 = x^3 + d^2Ax+d^3B$. It follows that the right hand side of (\ref{twistE}) is a quadratic residue if and only if $x^3+d^2Ax+d^3B$ is a non-residue.
\end{proof}

\begin{proof}[Proof of Theorem \ref{thm:jarb}]
Note that when $T \neq 0$, half of the $T$ are quadratic residues and the other half are quadratic non-residues. Since the Legendre symbol is multiplicative, it follows from Lemma \ref{twist_sum} that for half of the curves $E$ in the family, we have $a_E(p) = a_{\mathcal{E}(1)}(p)$, and for the other half, we have $a_E(p) = -a_{\mathcal{E}(1)}(p)$. This immediately yields the desired result.
\end{proof}

%%%%%%%%%%%%%%%%%%%%%%%%%%%%%%%%%%%%%%%%%%%%%%%%%%%%%%%%%%%%%%%%%%%%%%%%%%%%%%%%%%%%%%%%%%%%%%%%%%%%%%%%%%%%%%%%%%%%%%%%
%%%%%%%%%%%%%%%%%%%%%%%%%%%%%%%%%%%%%%%%%%%%%%%%%%%%%%%%%%%%%%%%%%%%%%%%%%%%%%%%%%%%%%%%%%%%%%%%%%%%%%%%%%%%%%%%%%%%%%%%
%%%%%%%%%%%%%%%%%%%%%%%%%%%%%%%%%%%%%%%%%%%%%%%%%%%%%%%%%%%%%%%%%%%%%%%%%%%%%%%%%%%%%%%%%%%%%%%%%%%%%%%%%%%%%%%%%%%%%%%%

%%%%%%%%%%%%%%%%%%%%%%%%%%%%%%%%%%%%%%%%%%%%%%%%%%%%%%%%%%%%%%%%%%%%%%%%%%%%%%%%%%%%%%%%%%%%%%%%%%%%%%%%%%%%%%%%%%%%%%%%
%%%%%%%%%%%%%%%%%%%%%%%%%%%%%%%%%%%%%%%%%%%%%%%%%%%%%%%%%%%%%%%%%%%%%%%%%%%%%%%%%%%%%%%%%%%%%%%%%%%%%%%%%%%%%%%%%%%%%%%%
%%%%%%%%%%%%%%%%%%%%%%%%%%%%%%%%%%%%%%%%%%%%%%%%%%%%%%%%%%%%%%%%%%%%%%%%%%%%%%%%%%%%%%%%%%%%%%%%%%%%%%%%%%%%%%%%%%%%%%%%

%%%%%%%%%%%%%%%%%%%%%%%%%%%%%%%%%%%%%%%%%%%%%%%%%%%%%%%%%%%%%%%%%%%%%%%%%%%%%%%%%%%%%%%%%%%%%%%%%%%%%%%%%%%%%%%%%%%%%%%%
%%%%%%%%%%%%%%%%%%%%%%%%%%%%%%%%%%%%%%%%%%%%%%%%%%%%%%%%%%%%%%%%%%%%%%%%%%%%%%%%%%%%%%%%%%%%%%%%%%%%%%%%%%%%%%%%%%%%%%%%
%%%%%%%%%%%%%%%%%%%%%%%%%%%%%%%%%%%%%%%%%%%%%%%%%%%%%%%%%%%%%%%%%%%%%%%%%%%%%%%%%%%%%%%%%%%%%%%%%%%%%%%%%%%%%%%%%%%%%%%%

\section{\texorpdfstring{${\rm GL}(1)$}{GL(1)} Families (Dirichlet Characters)}\label{sec:dirichlet}

We investigate two families. For the first, we study $\mathcal{D}_q$, the family of nontrivial Dirichlet characters of prime level $q$. For the second, we consider the sub-family of $\mathcal{D}_q$ of characters $\chi$ with prime torsion $\ell$ (thus, $\chi^\ell$ is the principal character). We take $q$ and $\ell$ to be distinct odd primes such that $q \equiv 1 \bmod \ell$, and let $\mathcal{D}_{q, \ell} \subseteq \mathcal{D}_q$ be those $\ell$-torsion characters; note it is not interesting to take $\ell = 2$, as the second moment summand would then be 1 at all primes $p$ relatively prime to the level $q$.

%%%%%%%%%%%%%%%%%%%%%%%%%%%%%%%%%%%%%%%%%%%%%%%%%%%%%%%%%%%%
%%%%%%%%%%%%%%%%%%%%%%%%%%%%%%%%%%%%%%%%%%%%%%%%%%%%%%%%%%%%
%%%%%%%%%%%%%%%%%%%%%%%%%%%%%%%%%%%%%%%%%%%%%%%%%%%%%%%%%%%%
\subsection{Preliminaries: Primes in Arithmetic Progression}

The bias in these families is related to the bias in primes in arithmetic progressions, specifically to the distribution of primes congruent to $1$ or $-1$ modulo a fixed prime $q$. We record some useful facts, taken from \cite{RubSa}.

First, some notation. Let $\pi(X,q,a)$ denote the number of primes at most $X$ which are congruent to $a$ modulo $q$, and set \be\label{eq:defnExqa} E(X,q,a) \ := \ \left(\varphi(q) \pi(X,q,a) - \pi(X)\right) \frac{\log X}{\sqrt{X}}. \ee  By Dirichlet's theorem on primes in arithmetic progression we know that to first order, if $a$ and $q$ are relatively prime, that $\pi(X,q,a)$ and $\pi(X)/\varphi(q)$ are of the same size, and thus the difference $\varphi(q) \pi(X,q,a) - \pi(X)$ should be significantly smaller than $X$; we expect it to be of size $\sqrt{X}/\log X$, hence the normalization factor. Set  \be c(q,a)\ =\ -1 \ + \ \sum_{b^2 \equiv a (q)} 1 \ee and
\begin{align}
\psi(X, \chi) \ = \ \sum_{n < X} \ \Lambda(n) \chi(n)
\end{align}
with $\Lambda(n)$ the classical von-Mangoldt function. We have (see Lemma 2.1 in \cite{RubSa}) that \be E(X,q,a) \ = \ -c(q,a) + \sum_{\chi\neq \chi_0} \overline{\chi}(a) \frac{\psi(X,\chi)}{\sqrt{X}} + O\left(\frac1{\log X}\right).\ee Unfortunately it is difficult to evaluate the sum over characters, though we can express it as a sum over zeros of the associated $L$-functions and then attack its value by assuming the Generalized Riemann Hypothesis (GRH) and the Grand Simplicity Hypothesis (GSH, which states that the imaginary parts of the critical zeros of Dirichlet $L$-functions are linearly independent over the rationals).

%%%%%%%%%%%%%%%%%%%%%%%%%%%%%%%%%%%%%%%%%%%%%%%%%%%%%%%%%%%%
%%%%%%%%%%%%%%%%%%%%%%%%%%%%%%%%%%%%%%%%%%%%%%%%%%%%%%%%%%%%
%%%%%%%%%%%%%%%%%%%%%%%%%%%%%%%%%%%%%%%%%%%%%%%%%%%%%%%%%%%%
\subsection{Characters of Prime Level}

The quantity below is defined to mirror the elliptic curve case (see \eqref{eq:ellcurvesecondmomentfullsum}). We divide by the cardinality of the family $\mathcal{D}_q$, which is $\varphi(q)-1 = q-2$ (remember we are excluding the trivial character).

\begin{definition}\label{definition:DirichletAverageMoment}
The average second moment of the family $\mathcal{D}_q$ is the sum
\begin{align}
M_2(\mathcal{D}_q, X) \ = \ \frac1{\pi(X)} \sum_{p \le X} \frac1{q-2} \sum_{\chi \in \mathcal{D}_q} \ \chi^2(p). \label{eq:secondlocalmmtdirichlet}
\end{align}
\end{definition}

\begin{theorem} \label{theorem:biasallchar}
Assuming GRH, the average second moment $M_2(\mathcal{D}_q, X)$ of the family $\mathcal{D}_q$ converges to $\frac1{q-2}$ as $X \rightarrow \infty$. The lower order error term is $\frac{\sqrt{X}}{(q-2)\pi(X)\log X}\left(E(X,q,1) + E(X,q,-1)\right)$. Additionally assuming GSH and $q$ sufficiently large, the error term as a function of $X$ is sometimes positive and sometimes negative (and on a logarithmic scale each happens a positive percentage of the time).
\end{theorem}

\begin{proof}
When evaluating the sum of Definition \ref{definition:DirichletAverageMoment}, we may assume $p \neq q$ as otherwise the character sum is trivially 0. By $\overline{\chi}$ we mean the inverse of $\chi \bmod q$, and $p^{-1}$ is the inverse of $p$ in $\F_q^\ast$. Rewriting \eqref{eq:secondlocalmmtdirichlet} as
\begin{align}
M_2(\mathcal{D}_q, X) \ = \  \frac1{\pi(X)} \sum_{p \le X}\frac1{q-2} \sum_{\chi \in \mathcal{D}_q} \ \chi(p) \ \overline{\chi(p^{-1})},
\end{align}
we deduce from the Schur orthogonality relations (for sums of Dirichlet characters) that
\begin{align}
\sum_{\chi \in \mathcal{D}_q} \chi(p)^2 \ = \ -1 + \left\{ \begin{array}{ll}
         q - 1  & \mbox{if $p \equiv \pm 1 (q)$} \\
	0 & \mbox{if $p \not\equiv \pm 1 (q)$}. \end{array} \right. \label{eq:schurorthog}
\end{align}
Thus
\begin{align}
M_2(\mathcal{D}_q, X) & \ = \  \frac1{\pi(X)} \frac1{q-2} \left[ \sum_{p \le X \atop p \equiv \pm 1 (q)} \ (q-1) \ - \ \sum_{p \le X} \ 1\right] \nonumber\\
&\ = \  \frac1{q-2} \frac{\varphi(q) \left( \pi(X, q, 1) + \pi(X, q, -1) \right) \ - \ \pi(X)}{\pi(X)} \nonumber\\
&\ = \  \frac1{q-2} \frac{\pi(X) + \left(\varphi(q) \pi(X,q,1) - \pi(X)\right) + \left(\varphi(q) \pi(X,q,-1) - \pi(X)\right)}{\pi(X)} \nonumber\\
& \ = \  \frac1{q-2} + \frac{\sqrt{X}}{(q-2)\pi(X)\log X}\left[E(X,q,1) + E(X,q,-1)\right]. \label{eq:pntterms}
\end{align}

Rubinstein and Sarnak \cite{RubSa} prove that on a logarithmic scale, all possible orderings of the number of primes in distinct residue classes occur a positive percentage of the time. More precisely, if $a_1, \dots, a_r$ are distinct residues relatively prime to $q$, set \be P_{q;a_1,\dots,a_r} \ := \{X\in\R : \pi(X,q,a_1) \ > \ \pi(X,q,a_2) \ > \ \cdots \ > \pi(X,q,a_r)\}. \ee The logarithmic density of a set $P$, denoted $\updelta(P)$, exists (and is the common limit) if the following two limits exist and are equal: \be \overline{\updelta}(P) \ := \ \limsup_{X\to\infty} \frac1{\log X} \int_{t \in P \cap [2,X]} \frac{dt}{t}, \ \ \ \underline{\updelta}(P) \ := \ \liminf_{X\to\infty} \frac1{\log X} \int_{t \in P \cap [2,X]} \frac{dt}{t}. \ee

Theorem 1.5 in \cite{RubSa} states that for each $r\ge2$, assuming GRH and GSH, we have \be \max_{a_1,\dots,a_r\,(q)} \left|\updelta(P_{q;a_1,\dots,a_r}) - \frac1{r!}\right| \ \to \ 0 \qquad (\text{as }q\to\infty). \ee Thus for $q$ sufficiently large, the secondary terms $E(X,q,\pm1)$ in \eqref{eq:pntterms} are each greater than zero (i.e. positive bias) for a positive percentage of $X$ on a logarithmic scale. Similarly $E(X,q,\pm1)<0$ (i.e. negative bias) for a positive percentage of $X$.
\end{proof}

%%%%%%%%%%%%%%%%%%%%%%%%%%%%%%%%%%%%%%%%%%%%%%%%%%%%%%%%%%%%
%%%%%%%%%%%%%%%%%%%%%%%%%%%%%%%%%%%%%%%%%%%%%%%%%%%%%%%%%%%%
%%%%%%%%%%%%%%%%%%%%%%%%%%%%%%%%%%%%%%%%%%%%%%%%%%%%%%%%%%%%
\subsection{Characters of Prime Level and Prime Torsion}

We now fix an odd prime $\ell$ and consider the family $\mathcal{D}_{q, \ell}$ of non-trivial characters of order $\ell$ with $q$ prime and $q \equiv 1 \bmod \ell$, which implies via the structure theorem for finite abelian groups that the families $\mathcal{D}_{q, \ell}$ are nonempty.

\begin{definition}
The average second moment of the family $\mathcal{D}_{q, \ell}$ is
\begin{align}
M_2(\mathcal{D}_{q,\ell}, X) \ = \ \frac1{\pi(X)} \sum_{p \le X} \frac{1}{|\mathcal{D}_{q,\ell}|} \sum_{\chi \in \mathcal{D}_{q,\ell}} \chi^2(p).
\end{align}
\end{definition}

In contrast to the previous family of all characters of level $q$, the restriction to characters with prime torsion $\ell$ gives us a family with zero main term.

\begin{theorem}\label{theorem:ltorsionbias} Fix an odd prime $\ell$. Then the family $\mathcal{D}_{q, \ell}$ has zero main term in its average second moment. Moreover, under GRH and GSH, the bias is positive and negative a positive percentage of the time.
\end{theorem}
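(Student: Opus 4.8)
The plan is to mirror the proof of Theorem~\ref{theorem:biasallchar}: evaluate the inner character sum exactly by orthogonality, rewrite $M_2(\mathcal{D}_{q,\ell},X)$ in terms of counts of primes in residue classes mod $q$, and feed the result into the Rubinstein--Sarnak apparatus of Section~\ref{sec:dirichlet}. Fix $\ell$ odd and $q\equiv1\bmod\ell$ prime, and let $R_\ell=(\F_q^\ast)^\ell$ be the index-$\ell$ subgroup of $\ell$-th powers in $(\Z/q\Z)^\ast$. The characters of order dividing $\ell$ form a group $\cong\Z/\ell\Z$, and $\mathcal{D}_{q,\ell}$ is this group with $\chi_0$ removed; since $\ell$ is odd, $\chi\mapsto\chi^2$ is an automorphism of it fixing $\chi_0$, so $\sum_{\chi\in\mathcal{D}_{q,\ell}}\chi^2(p)=\sum_{\chi\in\mathcal{D}_{q,\ell}}\chi(p)$. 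Orthogonality on $(\Z/q\Z)^\ast/R_\ell\cong\Z/\ell\Z$ then gives, for $p\neq q$,
\[
\sum_{\chi\in\mathcal{D}_{q,\ell}}\chi^2(p)\ =\ \ell\cdot\mathbf{1}\bigl[\,p\in R_\ell\,\bigr]-1 .
\]

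Since $|\mathcal{D}_{q,\ell}|=\ell-1$ and $\varphi(q)=q-1$, summing this over $p\le X$ shows $M_2(\mathcal{D}_{q,\ell},X)=\frac{1}{(\ell-1)\pi(X)}\bigl(\ell\sum_{a\in R_\ell}\pi(X,q,a)-\pi(X)\bigr)+O(1/\pi(X))$. The prime number theorem in arithmetic progressions gives $\sum_{a\in R_\ell}\pi(X,q,a)\sim\frac{|R_\ell|}{q-1}\pi(X)=\frac{\pi(X)}{\ell}$, so the bracket is $o(\pi(X))$ and $M_2(\mathcal{D}_{q,\ell},X)\to0$; this is the zero-main-term assertion, and it is unconditional. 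Writing $\pi(X,q,a)=\frac{\pi(X)}{q-1}+\frac{\sqrt X}{(q-1)\log X}E(X,q,a)$ and using $|R_\ell|=(q-1)/\ell$ collapses the remaining error exactly:
\[
M_2(\mathcal{D}_{q,\ell},X)\ =\ \frac{\ell\,\sqrt X}{(\ell-1)(q-1)\,\pi(X)\log X}\sum_{a\in R_\ell}E(X,q,a)\ +\ O\!\left(\frac1{\pi(X)}\right).
\]

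Now substitute $E(x,q,a)=-c(q,a)+\sum_{\chi\neq\chi_0}\overline\chi(a)\psi(x,\chi)/\sqrt x+O(1/\log x)$ and sum over $a\in R_\ell$. The inner sum $\sum_{a\in R_\ell}\overline\chi(a)$ equals $|R_\ell|$ when $\chi$ is trivial on $R_\ell$, i.e.\ when $\chi^\ell=\chi_0$, and vanishes otherwise; and since $c(q,a)=\legendre{a}{q}$ for $(a,q)=1$ while exactly half the elements of $R_\ell$ are quadratic residues mod $q$ (here $(q-1)/\ell$ is even because $q-1$ is even and $\ell$ is odd), one gets $\sum_{a\in R_\ell}c(q,a)=0$. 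Hence $\sum_{a\in R_\ell}E(X,q,a)$ is, up to a positive scalar and an $O(1/\log X)$ term, the real-valued quantity $B(X):=\sum_{\chi^\ell=\chi_0,\,\chi\neq\chi_0}\psi(X,\chi)/\sqrt X$, which is real because the $\ell-1$ characters of exact order $\ell$ pair up under complex conjugation. Thus the bias of $M_2(\mathcal{D}_{q,\ell},X)$ is a positive multiple of $B(X)$ plus negligible terms, and there is no deterministic (Chebyshev-type) drift, precisely because $\sum_{a\in R_\ell}c(q,a)$ vanished.

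It remains to see that $B(X)>0$ on a set of positive logarithmic density and $B(X)<0$ on another. Under GRH the explicit formula exhibits $u\mapsto B(e^u)$ as a mean-zero almost periodic function whose frequencies are the ordinates of the nontrivial zeros of the $L(s,\chi)$ with $\chi$ of exact order $\ell$ --- there are $\ell-1\ge2$ such primitive characters, so $B\not\equiv0$ --- and under GSH those ordinates are linearly independent over $\Q$. Applying the Rubinstein--Sarnak machinery of \cite{RubSa} to the linear functional cutting out $B$ then yields a limiting logarithmic distribution for $B$ that is non-degenerate and symmetric about its mean $0$, so $\updelta(\{X:B(X)>0\})>0$ and $\updelta(\{X:B(X)<0\})>0$; the same two-sided positivity then holds for the bias of $M_2(\mathcal{D}_{q,\ell},X)$, which proves the theorem. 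The first three steps are routine orthogonality and summation; the real work is in this last step, where one must rule out a hidden algebraic identity forcing $B\equiv0$ and establish non-degeneracy of the limiting distribution so that it charges both half-lines --- both consequences of GSH, but ones that require genuinely invoking the Rubinstein--Sarnak theory rather than a soft argument.
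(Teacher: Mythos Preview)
Your proof is correct and follows essentially the same route as the paper: evaluate the inner character sum by orthogonality (you use the automorphism $\chi\mapsto\chi^2$ for $\ell$ odd, the paper uses the averaging trick $\chi\mapsto\chi^r$ over $r=1,\dots,\ell-1$, but both yield the same value $\ell\cdot\mathbf{1}[p\in R_\ell]-1$), express $M_2$ as a weighted combination of prime counts in residue classes, observe the main terms cancel by Dirichlet, and appeal to Rubinstein--Sarnak for the sign changes.

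Where you go beyond the paper is in the final step. The paper simply writes ``Arguing as before'' and points back to the $q\to\infty$ ordering-equidistribution argument (their Theorem~3.5 citation) used for $\mathcal{D}_q$. You instead work at fixed $q$: you rewrite $\sum_{a\in R_\ell}E(X,q,a)$ via the explicit formula, note that the Chebyshev drift $\sum_{a\in R_\ell}c(q,a)$ vanishes (because $\ell$ odd forces $R_\ell$ to contain equally many quadratic residues and nonresidues), and then invoke the GSH limiting-distribution machinery to conclude that $B(X)$ has a symmetric non-degenerate logarithmic limit law. This is a sharper and more self-contained justification than the paper's sketch, and it makes explicit that there is no systematic Chebyshev-type bias in this family --- a point the paper does not isolate.
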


To prove Theorem \ref{theorem:ltorsionbias}, we recall some standard properties of the family $\mathcal{D}_{q, \ell}$ (cf. \cite{IK}).

\begin{lemma} \label{lemma:permuteCharacters} If $\ell$ is an odd prime and $r$ is relatively prime to $\ell$, then the map $\chi \mapsto \chi^r$ is an automorphism on $\mathcal{D}_{q, \ell}$.\end{lemma}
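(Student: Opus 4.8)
The plan is to realize $\mathcal{D}_{q,\ell}$ as the set of non-identity elements of a concrete finite cyclic group, and then observe that $\chi \mapsto \chi^r$ is just the restriction to that set of the classical $r$-th power automorphism of the group. First I would recall that $(\Z/q\Z)^\times$ is cyclic of order $q-1$, so its character group $\widehat{(\Z/q\Z)^\times}$ is also cyclic of order $q-1$. Since $q \equiv 1 \bmod \ell$, the subgroup $G := \{\chi : \chi^\ell = \chi_0\}$ of characters killed by $\ell$ is the unique subgroup of order $\ell$, hence cyclic of order exactly $\ell$; and because $\ell$ is prime, every non-trivial element of $G$ has order exactly $\ell$, so $\mathcal{D}_{q,\ell} = G \setminus \{\chi_0\}$.

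Next I would check the map sends $\mathcal{D}_{q,\ell}$ into itself and is bijective. For $\chi \in G$ we have $(\chi^r)^\ell = (\chi^\ell)^r = \chi_0$, so $\chi^r \in G$. Choose $r'$ with $r r' \equiv 1 \bmod \ell$; since exponents of elements of the order-$\ell$ group $G$ only matter modulo $\ell$, we get $(\chi^r)^{r'} = \chi^{r r'} = \chi$ and likewise $(\chi^{r'})^{r} = \chi$ for every $\chi \in G$. In particular $\chi \neq \chi_0 \Rightarrow \chi^r \neq \chi_0$, so the map restricts to a map $\mathcal{D}_{q,\ell} \to \mathcal{D}_{q,\ell}$, and $\chi \mapsto \chi^{r'}$ is a two-sided inverse, so it is a bijection. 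Finally, since Dirichlet characters multiply pointwise, $(\chi_1\chi_2)^r = \chi_1^r \chi_2^r$, so the map is a group homomorphism on $G$, hence a group automorphism of $G$ fixing $\chi_0$; its restriction to $\mathcal{D}_{q,\ell}$ is the asserted automorphism.

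I do not expect any real obstacle here. The only subtlety worth a sentence is that $\mathcal{D}_{q,\ell}$ is itself not a group, since it omits the principal character; thus ``automorphism on $\mathcal{D}_{q,\ell}$'' should be read as the restriction to $\mathcal{D}_{q,\ell}$ of the automorphism $g \mapsto g^r$ of the ambient cyclic group $G$ of order $\ell$ (equivalently, as a permutation of the set $\mathcal{D}_{q,\ell}$ compatible with pointwise multiplication whenever the product remains in $G$). Once $G$ is identified as cyclic of order $\ell$, the statement reduces to the elementary fact that raising to a power coprime to the order is an automorphism of a finite cyclic group.
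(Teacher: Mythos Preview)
Your proof is correct; the paper states this lemma without proof, treating it as a standard property of the family $\mathcal{D}_{q,\ell}$. Your observation that $\mathcal{D}_{q,\ell}$ is not itself a group and that ``automorphism'' should be read as the restriction of the $r$-th power automorphism of the ambient cyclic group $G$ of order $\ell$ is exactly the right clarification, and your argument via the inverse exponent $r'$ with $rr' \equiv 1 \bmod \ell$ is the natural way to fill in the details.
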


%\begin{proof}
%The map is well-defined, and as $\ell$ is odd the inverse map exists and is given by $\chi \mapsto \chi^{\frac{\ell + 1}{2}}$.
%\end{proof}

\begin{lemma}\label{lemma:countresidues}
Let $\F_q^\ast(\ell) \subseteq \F_q^\ast$ be the $\ell$\textsuperscript{{\rm th}} power residues modulo $q$. Then $\# \F_q^\ast(\ell) = (q-1)/\ell$, which implies $\#\{a \in \F_q^\ast: a \not\in \F_q^\ast(\ell)\} = (q-1) (\ell-1)/\ell$. \end{lemma}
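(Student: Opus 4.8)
The plan is to prove both assertions by elementary group theory, since $\mathcal{D}_{q,\ell}$ is essentially the group of $\ell$\textsuperscript{th} residues.

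First I would establish the counting claim. The group $\F_q^\ast$ is cyclic of order $q-1$, and since $q \equiv 1 \bmod \ell$ we have $\ell \mid q-1$; thus the image of the $\ell$\textsuperscript{th} power map $x \mapsto x^\ell$ on $\F_q^\ast$ is the unique subgroup of index $\ell$, which has order $(q-1)/\ell$. (Equivalently, the kernel of this map is the subgroup of $\ell$\textsuperscript{th} roots of unity, which has order $\gcd(\ell, q-1) = \ell$, so the image has order $(q-1)/\ell$ by the first isomorphism theorem.) This is exactly $\#\F_q^\ast(\ell) = (q-1)/\ell$. The complement then has cardinality $(q-1) - (q-1)/\ell = (q-1)(\ell-1)/\ell$, which is the second claim.

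There is essentially no obstacle here; the only thing to be careful about is invoking the right form of the structure of cyclic groups and making sure the hypothesis $q \equiv 1 \bmod \ell$ is used to guarantee $\ell \mid q-1$ (so that the subgroup of index $\ell$ genuinely exists and the $\ell$\textsuperscript{th} power map is $\ell$-to-one rather than injective). I would state this cleanly: write $\F_q^\ast = \langle g \rangle$ with $g$ of order $q-1$; then $g^j$ is an $\ell$\textsuperscript{th} residue if and only if $\ell \mid j$ in $\Z/(q-1)\Z$, and since $\ell \mid q-1$ the number of such $j \in \{0, 1, \dots, q-2\}$ is precisely $(q-1)/\ell$. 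The remaining count follows by subtraction.

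Here is the short proof.

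\begin{proof}
Since $q$ is prime, $\F_q^\ast$ is cyclic of order $q-1$; fix a generator $g$. Because $q \equiv 1 \bmod \ell$ we have $\ell \mid q-1$. An element $g^j$ (for $0 \le j \le q-2$) lies in $\F_q^\ast(\ell)$ if and only if $g^j = (g^m)^\ell = g^{m\ell}$ for some $m$, i.e.\ if and only if $j \equiv m\ell \pmod{q-1}$ is solvable in $m$, which (since $\gcd(\ell, q-1) = \ell$, as $\ell \mid q-1$) holds precisely when $\ell \mid j$. The number of $j \in \{0, 1, \dots, q-2\}$ divisible by $\ell$ is $(q-1)/\ell$, so $\# \F_q^\ast(\ell) = (q-1)/\ell$. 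Consequently
\begin{align}
\#\{a \in \F_q^\ast : a \notin \F_q^\ast(\ell)\} \ = \ (q-1) - \frac{q-1}{\ell} \ = \ \frac{(q-1)(\ell-1)}{\ell}.
\end{align}
\end{proof}
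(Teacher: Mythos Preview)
Your proof is correct and is the cleanest possible argument: you use the cyclicity of $\F_q^\ast$ directly, reducing the count of $\ell$\textsuperscript{th} powers to counting residues $j \in \{0,\dots,q-2\}$ with $\ell \mid j$.

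The paper's own argument (relegated to deleted appendix material) takes a slightly more circuitous route. It computes the size of the quotient $\F_q^\ast/\F_q^\ast(\ell)$ instead: since every coset is $\ell$-torsion and the quotient is cyclic, its order divides $\ell$, and then a non-$\ell$\textsuperscript{th}-power is exhibited via the Chinese Remainder decomposition $\Z/(q-1)\Z \simeq \Z/\ell^k\Z \oplus \Z/N\Z$ to rule out the trivial quotient. This works but is longer, and the step ``$\ell$-torsion implies order dividing $\ell$'' tacitly relies on the cyclicity that your approach uses up front. Your version is more direct and just as rigorous; there is nothing to fix.
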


%\fix{SJM: CAN GIVE THE PROOF IN AN APPENDIX ON THE ARXIV VERSION, BUT NEED TO CUT DOWN SPACE.}

\begin{proof}[Proof of Theorem \ref{theorem:ltorsionbias}] We must compute
\begin{align}\label{eq:m2mathcaldqellp}
M_2(\mathcal{D}_{q, \ell}, X) \ = \ \frac1{\pi(X)} \sum_{p \le X} \frac1{|\mathcal{D}_{q,\ell}|} \sum_{\chi \in \mathcal{D}_{q, \ell}} \chi^2(p).
\end{align}
For $r \in \{1, \dots, \ell-1\}$, we know by Lemma \ref{lemma:permuteCharacters} that the map $\chi \mapsto \chi^r$ is an automorphism of $\mathcal{D}_{q,\ell}$. This gives%Thus the sum over all such $\chi$ of $\chi(p)$ is the same as that of $\chi(p)^2$ or $\chi(p)^r$, and hence
\begin{align}
\sum_{\chi \in \mathcal{D}_{q, \ell}} \chi(p)^2 \ = \ \sum_{\chi \in \mathcal{D}_{q, \ell}} \chi(p) \ = \ \frac{1}{\ell - 1} \ \sum_{\chi \in \mathcal{D}_{q, \ell}} \ \left( \chi(p) \ + \ \chi^2(p) \ + \ \cdots \ + \ \chi^{\ell - 1}(p) \right).
\end{align}

For any $\chi \in \mathcal{D}_{q,\ell}$, we have that $p \not \in \F_q^\ast(\ell)$ if and only if $\chi(p) \neq 1$. For such $p$ we have \be 1 + \chi(p) + \chi^2(p) + \cdots + \chi^{\ell-1}(p) \ = \ \frac{\chi^{\ell}(p) - 1}{\chi(p)-1} \ = \ 0,\ee so we find that
\begin{align}
\sum_{\chi \in \mathcal{D}_{q, \ell}} \chi(p)^2 \ = \ \frac{1}{\ell - 1} \ \sum_{\chi \in \mathcal{D}_{q, \ell}} (-1) \ = \ - \frac{|\mathcal{D}_{q, \ell}|}{\ell - 1}
\end{align}
if $p \not \in \F_q^\ast(\ell) $.

If $p = a^{\ell}$ for some $a \in \F_q^\ast$, then trivially $\sum_\chi \chi^2(p) = |\mathcal{D}_{q, \ell}|$ as each $\chi^2(p)$ equals 1. Thus
\begin{align}
M_2(\mathcal{D}_{q, \ell}, X) \ = \ \frac1{\pi(X)} \left( \sum_{a \in \F_q^\ast(\ell)} \ \pi(X, q, a) \ - \ \frac{1}{\ell - 1} \cdot \sum_{a \not\in \F_q^\ast(\ell)} \ \pi(X, q, a)\right). \label{eq:rsexpforsecondmoment}
\end{align}

By Lemma \ref{lemma:countresidues} the number of terms in the sums over $a$ are $(q-1)/\ell$ and $(q-1)(\ell-1)/\ell$ respectively; then by the Prime Number Theorem in arithmetic progressions the main terms cancel above. As before, assuming GRH and GSH, the bias is positive and negative a positive percentage of the time. \end{proof}

\section{Holomorphic Newforms on \texorpdfstring{$GL(2)/\Q$}{GL(2)/Q} and their Symmetric Lifts}\label{sec:gltwo}

For a fixed level $q$ and even integer $k$, let $\mathcal{C}_{k,q}(\chi_0)$ denote the space of cusp forms of weight $k$, level $q$, and principal central character. We assume $q$ is square-free throughout, and our main result will be concerned only with prime $q$ (it should be possible to handle square-free $q$ with more care). Note $\mathcal{C}_{k,q}(\chi_0)$ has a subspace which is an eigenspace for all $p^\text{th}$ Hecke operators, $p \nmid q$, which is denoted by $H_{k,q}^\ast(\chi_0)$, and known as the space of cuspidal newforms of weight $k$, level $q$ and principal central character. We study the $L$-functions associated to these objects and their symmetric lifts. We fix a square-free level $q$ and consider the untwisted family
\begin{align}\label{eq:definitionFrXdeltaq}
\mathcal{F}_{r, X, \delta, q} \ = \ \bigcup_{k < X^{\delta}} \ \on{Sym}^r \left[ H_{k,q}^\ast(\chi_0)\right]
\end{align}
for $\delta > 0$; \textbf{$k$ will always range over even integers in all sums below},  which we denote by a star in the summation. For $p \nmid q$, we define the $p$-local second moment of this family by
\begin{align}\label{eq:definitionofMtwop}
M_{2,p} (\mathcal{F}_{r,X,\delta,q}) \ = \ \frac{1}{\sideset{}{^\ast}\sum_{k < X^{\delta}} \dim H^\ast_{k,q}(\chi_0)} \sideset{}{^\ast}\sum_{k < X^{\delta}} \sum_{f \in H_{k,q}^{\ast}(\chi_0)} \ a_{\on{Sym}^r f}^2(p).
\end{align}
We fix a constant $\sigma > 0$ and sum over primes $p \le X^\sigma$ that do not divide the level, and define the second moment of $\mathcal{F}_{r,X,\delta,q}$ by
\begin{align}\label{eq:definitionofMtwosigma}
M_{2, \sigma}(\mathcal{F}_{r, X, \delta, q}) \ = \ \frac{1}{\pi(X^{\sigma})} \sum_{\substack{p \le X^{\sigma}\\p\neq q}} \ M_{2,p}(\mathcal{F}_{r, X, \delta, q}).
\end{align}
The parameter $\sigma$ controls the number of primes $p$ we sum over compared to the number of weights $k$ we sum over in the $p$-local second moment. In particular, averaging over primes $p$ allows us to exploit the dependence of the coefficients $a^2_{\on{Sym}^rf}(p)$ on the prime $p$; this will extract the lower order terms determining the bias.

We now study the bias of
\begin{align}
M_{2, \sigma} (\mathcal{F}_{r, X, \delta}) \ = \ \lim_{q \to \infty} \ M_{2, \sigma}(\mathcal{F}_{r, X, \delta, q})
\end{align}
where the limit is taken over prime levels $q$. We prove the following bias result for the family $\mathcal{F}_{r,X, \delta}$.

\begin{theorem} \label{theorem:leveltoinfinitysymr}
We have \be M_{2, \sigma} (\mathcal{F}_{r, X, \delta}) \ = \
\left(1 + O\left(X^{-\delta}\right)\right) \left(1 \ + \ \frac{\log \log X^{\sigma}}{\pi(X^\sigma)} \ + \ O\left(\frac1{\pi(X^\sigma)}\right)\right). \ee If we choose $\sigma < \delta$ then the main term of $M_{2, \sigma}(\mathcal{F}_{r, X, \delta})$ is 1 with leading lower order term
\begin{align}
\frac{\log \log X^{\sigma}}{\pi(X^\sigma)}.
\end{align}
We say that these families have a main term of 1 and a small positive bias of $(\log\log X^{\sigma})/\pi(X^\sigma)$ (which tends to 0 as $X\to\infty)$, so long as $\delta > \sigma$.
\end{theorem}

We may also average over the level $q$; as the calculations are similar in the interest of space we will not report on this case (in that situation, $q$ is not square-free so we need to use some results from \cite{BBDDM}). So long as the error terms in counting the forms in the family are smaller than the leading error terms in the Petersson computations we will need, the rate at which we let the weights and levels grow with respect to the rate at which we average over primes does not change the sign of the bias in the family but does change the size of the bias.

Finally, we could also analyze higher moments. The situation is strikingly different here than in the case of elliptic curves, as the Petersson formula is still available, and thus for suitably restricted ranges the computations are handled analogously as above.

%%%%%%%%%%%%%%%%%%%%%%%%%%%%%%%%%%%%%%%%%
%%%%%%%%%%%%%%%%%%%%%%%%%%%%%%%%%%%%%%%%%
%%%%%%%%%%%%%%%%%%%%%%%%%%%%%%%%%%%%%%%%%
\subsection{Preliminaries}

We briefly review some needed facts; see \cite{IK} for a detailed exposition. For an $f \in H_{k,q}^\ast(\chi_0)$, we consider the $L$-function attached to the $r$\textsuperscript{{\rm th}} symmetric lift of $f$, whose local Euler factors are given by
\begin{align}
L_p(\on{Sym}^r f, s) \ = \ \prod_{j = 0}^r \left( 1 - \alpha_p^{r-j} \beta_p^j p^{-s}\right)^{-1}
\end{align} where $\alpha_p, \beta_p$ are the two Satake parameters at $p$. Note if $r=1$ we regain our original form $f$, and thus we can study families of cuspidal newforms and their symmetric lifts simultaneously. We  have \be \lambda_{\on{Sym}^r f}(p) \ = \ \lambda_f(p^r). \label{eq:symtocusp} \ee Before we proceed with computing anything, we recall that a \emph{ramified} prime $p$ is a prime that divides the level of the newform, and an \emph{unramified} prime does not divide the level. Thus, by \eqref{eq:symtocusp} and the theory of $L$-functions attached to holomorphic cusp forms $f$, we have for any distinct, unramified primes \be \lambda_{\on{Sym}^r f}(p) \lambda_{\on{Sym}^r f}(q) \ = \ \lambda_{\on{Sym}^r f}(pq). \ee

To compute the second-moment bias, we use the following standard fact.

\begin{lemma} \label{lemma:secondmomentterm}
For unramified primes $p$,
\begin{align}
\lambda_{\on{Sym}^r f}^2(p) \ = \ \lambda_f^2(p^r) \ = \ \sum_{\ell = 0}^{r} \lambda_f(p^{2 \ell}). \label{eq:secondmomenteigsymr}
\end{align}
\end{lemma}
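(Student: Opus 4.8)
The plan is to express the square $\lambda_{\on{Sym}^r f}^2(p)$ as a linear combination of Hecke eigenvalues $\lambda_f(p^{m})$ and then collapse the sum. First I would invoke the identity $\lambda_{\on{Sym}^r f}(p) = \lambda_f(p^r)$ from \eqref{eq:symtocusp}, so that the claim reduces to showing $\lambda_f(p^r)^2 = \sum_{\ell=0}^r \lambda_f(p^{2\ell})$. For this I would recall that the $p$-local factor of $L(f,s)$ factors as $(1-\alpha_p p^{-s})^{-1}(1-\beta_p p^{-s})^{-1}$ with $\alpha_p\beta_p = 1$ (trivial central character, unramified $p$), so $\lambda_f(p^n) = \sum_{i=0}^n \alpha_p^{n-i}\beta_p^i$ — the complete homogeneous symmetric function $h_n(\alpha_p,\beta_p)$, equivalently $\alpha_p^n + \alpha_p^{n-2} + \cdots + \alpha_p^{-n}$ using $\beta_p = \alpha_p^{-1}$.

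Next I would compute the product $\lambda_f(p^r)^2 = h_r(\alpha_p,\beta_p)^2$ directly. Writing $x = \alpha_p$ so $\beta_p = x^{-1}$, we have $\lambda_f(p^r) = x^r + x^{r-2} + \cdots + x^{-r}$, a sum of $r+1$ terms. Squaring this Laurent polynomial and grouping by powers of $x$, the coefficient of $x^{2j}$ (for $0 \le j \le r$) works out to $r+1-j$, and by symmetry the same holds for $x^{-2j}$; the constant term is $r+1$. One then checks that $\sum_{\ell=0}^r \lambda_f(p^{2\ell}) = \sum_{\ell=0}^r (x^{2\ell} + x^{2\ell-2} + \cdots + x^{-2\ell})$ has exactly the same coefficients: the monomial $x^{2j}$ appears in $\lambda_f(p^{2\ell})$ precisely when $\ell \ge j$, i.e.\ for $r+1-j$ values of $\ell$, giving matching coefficients. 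Hence the two expressions agree as Laurent polynomials in $x$, and the identity follows. (Alternatively, one can use the Clebsch--Gordan / Hecke relation $\lambda_f(p^a)\lambda_f(p^b) = \sum_{c=0}^{\min(a,b)} \lambda_f(p^{a+b-2c})$ with $a=b=r$, which gives $\lambda_f(p^r)^2 = \sum_{c=0}^r \lambda_f(p^{2r-2c}) = \sum_{\ell=0}^r \lambda_f(p^{2\ell})$ after reindexing $\ell = r-c$; this is the cleanest route.)

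I do not anticipate a serious obstacle here — this is a standard symmetric-function / Hecke-algebra identity and the only care needed is bookkeeping of coefficients when squaring the Laurent polynomial, or correctly quoting the Hecke multiplicativity relation. The one point worth stating explicitly is that unramifiedness of $p$ is used to guarantee $\alpha_p\beta_p = 1$ and the standard Euler factor shape, which is why the lemma is phrased for unramified primes. The cleanest write-up uses the Hecke relation directly; the squaring computation is a self-contained fallback if one prefers not to cite it.
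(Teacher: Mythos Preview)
Your proposal is correct and follows essentially the same approach as the paper: both write $\lambda_f(p^r)=\alpha_p^r+\alpha_p^{r-2}+\cdots+\alpha_p^{-r}$ (using $\alpha_p\beta_p=1$ at unramified $p$), square this Laurent polynomial to obtain coefficient $r+1-j$ on $\alpha_p^{\pm 2j}$, and then verify that $\sum_{\ell=0}^r \lambda_f(p^{2\ell})$ produces the identical coefficients. Your alternative via the Hecke relation $\lambda_f(p^a)\lambda_f(p^b)=\sum_{c=0}^{\min(a,b)}\lambda_f(p^{a+b-2c})$ is a slightly slicker shortcut the paper does not use, but the primary argument is the same.
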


\begin{proof}
Because $\alpha_p \beta_p = 1$ for $p$ unramified, we have
\begin{align}
\lambda_f^2(p^r) \ = \ \left( \alpha_p^r + \alpha_p^{r-2} + \cdots  + \alpha_p^{-r} \right)^2 \ = \ \sum_{\ell = 1}^{r} (r-\ell + 1) \left( \alpha_p^{2\ell} + \alpha_p^{-2\ell} \right) + (r+1). \label{eq:secondmomenteig}
\end{align}
Since $\lambda_f(p^{m}) \ = \ \alpha_p^m + \alpha_p^{m-2} + \cdots  + \alpha_p^{-m}$, the far RHS of \eqref{eq:secondmomenteig} agrees with the far RHS of \eqref{eq:secondmomenteigsymr}.
\end{proof}

The main tool is the Petersson formula; the version below is Proposition 2.13 in \cite{ILS}.

\begin{prop}[Proposition 2.13, \cite{ILS}] \label{prop:pformula}
Let $q$ be squarefree and $(n, q^2) \mid q$. Then
\begin{align}
\sum_{f \in H_{k, q}^\ast(\chi_0)} \ \lambda_{f}(n) \ = \ \begin{cases}
\frac{k-1}{12} \ \frac{\varphi(q)}{\sqrt{n}} \ + \ O \left( (n, q)^{-\frac{1}{2}} \ n^{\frac16} k^{\frac23} q^{\frac23} \right) & n \emph{ a square and } (n,q) = 1, \\
O \left( (n, q)^{-\frac{1}{2}} \ n^{\frac16} k^{\frac23} q^{\frac23} \right) & \emph{otherwise.}
\end{cases} \label{eq:pformula}
\end{align}
\end{prop}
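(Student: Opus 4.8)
\textbf{Proof proposal for Theorem \ref{theorem:leveltoinfinitysymr}.}

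The plan is to combine the decomposition of the $p$-local second moment summand from Lemma \ref{lemma:secondmomentterm} with the Petersson formula (Proposition \ref{prop:pformula}), then sum over weights $k < X^\delta$ and primes $p \le X^\sigma$, carefully tracking all error terms. First I would fix a square-free level $q$ and an unramified prime $p$ (the ramified primes, being at most $\log q$ in number, contribute an error absorbed into the big-Oh terms). By Lemma \ref{lemma:secondmomentterm} we have $\lambda_{\on{Sym}^r f}^2(p) = \sum_{\ell=0}^r \lambda_f(p^{2\ell})$, so
\begin{align}
\sum_{f \in H_{k,q}^\ast(\chi_0)} \lambda_{\on{Sym}^r f}^2(p) \ = \ \sum_{\ell = 0}^r \sum_{f \in H_{k,q}^\ast(\chi_0)} \lambda_f(p^{2\ell}).
\end{align}
Now I apply Proposition \ref{prop:pformula} to each inner sum with $n = p^{2\ell}$. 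Since $p^{2\ell}$ is a perfect square, the main term $\delta_{n,\Box}\frac{k-1}{12}\frac{\varphi(q)}{\sqrt{n}} = \frac{k-1}{12}\frac{\varphi(q)}{p^\ell}$ is present for every $\ell$; the $\ell = 0$ term gives $\frac{k-1}{12}\varphi(q)$, which is exactly $\dim H_{k,q}^\ast(\chi_0)$ up to lower-order terms, and the terms $\ell \ge 1$ give a geometric-type contribution $\frac{k-1}{12}\varphi(q)\sum_{\ell=1}^r p^{-\ell}$. The error term from Proposition \ref{prop:pformula} is $O(p^{\ell/3} k^{2/3} q^{2/3})$ (using $(n,q) = 1$ for unramified $p$ and $n = p^{2\ell}$, so $n^{1/6} = p^{\ell/3}$); summing over $\ell \le r$ this is $O_r(p^{r/3} k^{2/3} q^{2/3})$.

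Next I sum over $k < X^\delta$. The number of weights is $\asymp X^\delta$, and $\sum_{k < X^\delta} (k-1) \asymp X^{2\delta}$, while $\sum_{k<X^\delta} k^{2/3} \ll X^{5\delta/3}$. Dividing by the normalization $\sum_{k<X^\delta}\dim H_{k,q}^\ast(\chi_0) \asymp \varphi(q) X^{2\delta}$ (valid since $\dim H_{k,q}^\ast(\chi_0) = \frac{k-1}{12}\varphi(q) + O(\ldots)$, a fact I would cite from \cite{IK} or \cite{ILS}), the main contribution to $M_{2,p}(\mathcal{F}_{r,X,\delta,q})$ becomes
\begin{align}
M_{2,p}(\mathcal{F}_{r,X,\delta,q}) \ = \ \left(1 + \sum_{\ell=1}^r p^{-\ell}\right)\left(1 + O(X^{-\delta})\right) \ + \ O\!\left(\frac{p^{r/3} X^{5\delta/3} q^{2/3}}{\varphi(q) X^{2\delta}}\right) \ = \ 1 + \sum_{\ell=1}^r p^{-\ell} + O\!\left(X^{-\delta} + \frac{p^{r/3}}{X^{\delta/3} q^{1/3}}\right),
\end{align}
using $\varphi(q) \asymp q/\log\log q$ for the last error term, which produces the $X^{-\delta/3}q^{-1/3}\log\log q$ shape after the prime average. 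Then I sum over primes $p \le X^\sigma$ and divide by $\pi(X^\sigma)$. The main term $1$ survives; the term $\sum_{\ell=1}^r p^{-\ell}$ averaged over primes is dominated by $\ell = 1$, giving $\frac{1}{\pi(X^\sigma)}\sum_{p\le X^\sigma} p^{-1} = \frac{\log\log X^\sigma}{\pi(X^\sigma)} + O(1/\pi(X^\sigma))$ by Mertens' theorem, while the $\ell \ge 2$ contributions are $O(1/\pi(X^\sigma))$ since $\sum_p p^{-\ell}$ converges. The error term $p^{r/3}/(X^{\delta/3}q^{1/3})$ summed over $p \le X^\sigma$ and divided by $\pi(X^\sigma)$ is $O(X^{\sigma r/3}/(X^{\delta/3}q^{1/3}))$; I would need to check this is consistent with the stated $O(X^{-\delta/3}q^{-1/3}\log\log q)$ multiplicative error, which requires the admissibility constraint $X^\sigma < X^{\delta/3}q^{1/3}$ (and the $X^\sigma < X^\delta$ constraint controls the $X^{-\delta}$ error) — precisely the hypotheses under which the clean statement is asserted. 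Assembling these pieces gives the displayed product form with the three main terms $1$, $\frac{\log\log X^\sigma}{\pi(X^\sigma)}$, $O(1/\pi(X^\sigma))$ and the multiplicative error $1 + O(X^{-\delta} + X^{-\delta/3}q^{-1/3}\log\log q)$.

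The main obstacle I anticipate is bookkeeping the interplay of the three scales — the number of weights $X^\delta$, the number of primes $X^\sigma$, and the level $q$ — so that every error term from Proposition \ref{prop:pformula} is genuinely dominated after both the weight-average and the prime-average, and so that the Petersson formula's own applicability hypothesis $n^{9/7} \le k^{16/21}q^{6/7}$ (with $n = p^{2\ell} \le X^{2\sigma r}$) holds for all $p \le X^\sigma$, all $\ell \le r$, and all $k$ in a positive-proportion subrange of $[1, X^\delta)$; the weights $k$ that are too small must be shown to contribute negligibly to both numerator and denominator. A secondary technical point is justifying that replacing $\dim H_{k,q}^\ast(\chi_0)$ by $\frac{k-1}{12}\varphi(q)$ introduces only an error of the stated shape, which follows from the standard dimension formula for newforms of square-free level, and that the ramified primes (for which $\alpha_p\beta_p \ne 1$, so Lemma \ref{lemma:secondmomentterm} does not directly apply) contribute an amount absorbed into the error since they number $O(\log q) = o(\pi(X^\sigma))$.
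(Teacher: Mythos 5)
Your proposal does not address the statement it was supposed to prove. The statement under review is Proposition \ref{prop:pformula} itself, i.e., the asymptotic formula
\begin{equation}
\sum_{f \in H_{k, q}^\ast(\chi_0)} \lambda_{f}(n) \ = \ \delta_{n, \Box} \frac{k-1}{12} \frac{\varphi(q)}{\sqrt{n}} + O\left( (n, q)^{-1/2} n^{1/6} k^{2/3} q^{2/3} \right)
\end{equation}
in the stated range, which is Proposition 2.13 of \cite{ILS} and is quoted in the paper as an external input, not proved there. What you have written is a proof sketch of Theorem \ref{theorem:leveltoinfinitysymr}, the second-moment bias result for the symmetric-lift families, and your argument \emph{uses} Proposition \ref{prop:pformula} as a black box at its very first step. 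You cannot prove a statement by invoking it; as it stands the proposal is circular with respect to the assigned statement, and the actual content of your sketch (Lemma \ref{lemma:secondmomentterm}, weight and prime averaging, the bookkeeping of the scales $X^\delta$, $X^\sigma$, $q$) reproduces essentially the paper's own derivation of Theorem \ref{theorem:leveltoinfinitysymr}, which is a different result.

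A genuine proof of Proposition \ref{prop:pformula} requires entirely different machinery: one starts from the Petersson trace formula for the full space $H_{k,q}(\chi_0)$ with harmonic weights (as in Theorem \ref{theorem:petersson}), estimates the off-diagonal terms via Weil's bound for the Kloosterman sums $S(m,n;c)$ together with bounds for the Bessel function $J_{k-1}$, and then removes both the harmonic weights and the oldform contribution by the M\"obius/newform sieving carried out in Section 2 of \cite{ILS}; the peculiar exponents $n^{9/7} \le k^{16/21} q^{6/7}$ and the factor $(n,q)^{-1/2}$ arise precisely from balancing these error terms. None of these ingredients appears in your proposal. If your intent was to prove Theorem \ref{theorem:leveltoinfinitysymr}, your outline is broadly consistent with the paper's argument, but for the proposition actually assigned you would need to either reproduce the ILS argument or state clearly that it is being cited rather than proved.
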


Lastly, we need asymptotics for even weight $k$ of $\dim H_{k,q}^\ast(\chi_0)$ for $q$ square-free. To do so, we appeal to the following.

\begin{lemma}[Corollary 2.14, \cite{ILS}]\label{lemma:dimcusp}
For even weights $k \geq 2$ and $q$ square-free,
\begin{align}
\dim H_{k,q}^\ast(\chi_0) \ = \ \frac{k-1}{12} \varphi(q) \ + \ O (\left( k q \right)^{2/3}).
\end{align}
\end{lemma}

Note that in particular, the above corollary yields
\begin{align}
\sideset{}{^\ast} \sum_{k < X^{\delta}} \ \dim H_{k,q}^\ast(\chi_0) \ = \ \frac{\varphi(q)}{48} X^{2 \delta} + O\left(X^{5\delta/3} q^{2/3}\right).
\end{align}

%%%%%%%%%%%%%%%%%%%%%%%%%%%%%%%%%%%%%%%%%%%%%%%%%%%%%%%%%%%%
%%%%%%%%%%%%%%%%%%%%%%%%%%%%%%%%%%%%%%%%%%%%%%%%%%%%%%%%%%%%
%%%%%%%%%%%%%%%%%%%%%%%%%%%%%%%%%%%%%%%%%%%%%%%%%%%%%%%%%%%%
\subsection{Proof of Theorem \ref{theorem:leveltoinfinitysymr}}

Let $\chi_0$ denote the principal character. We fix a prime level $q$ and consider the untwisted family
$\mathcal{F}_{r, X, \delta, q}$ defined in \eqref{eq:definitionFrXdeltaq}, and for $\delta, \sigma > 0$ investigate the second moments
$M_{2,p}(\mathcal{F}_{r,X,\delta,q})$ and $M_{2, \sigma}(\mathcal{F}_{r, X, \delta,q})$ defined in equations \eqref{eq:definitionofMtwop} and \eqref{eq:definitionofMtwosigma}. The second moment of interest is the following limit of second moments:
\begin{align}
M_{2, \sigma}(\mathcal{F}_{r, X, \delta}) \ = \ \lim_{q \to \infty} \ M_{2, \sigma}(\mathcal{F}_{r, X, \delta, q}).
\end{align}

Unfolding the RHS of \eqref{eq:definitionofMtwop}, using \eqref{eq:secondmomenteigsymr} and Proposition \ref{prop:pformula}, we have
\begin{align}\label{eq:unfoldedRHSfromMtwo}
& M_{2, \sigma} \left( \mathcal{F}_{r, X, \delta, q} \right) \ = \ \frac1{\pi(X^\sigma)} \frac{1}{\sideset{}{^\ast}\sum_{k < X^{\delta}} \dim(H^{\ast}_{k,q}(\chi_0))} \ \sum_{\substack{p \le X^{\sigma}\\ p\neq q}} \ \sideset{}{^\ast}\sum_{k < X^{\delta}} \ \sum_{f \in H_{k,q}^*(\chi_0) } \lambda_{\on{Sym}^r f}^2(p)\nonumber\\
&\ \ = \ \frac{1}{\pi(X^\sigma)} \frac{1}{\sideset{}{^\ast}\sum_{k < X^{\delta}} \dim(H^{\ast}_{k,q}(\chi_0))} \sum_{\substack{p \leq X^\sigma\\p \neq q}} \sideset{}{^\ast}\sum_{k < X^\delta} \sum_{\ell=0}^r \left(\frac{k-1}{12} \varphi(q) p^{-\ell} + O(p^{\ell/3} k^{2/3} q^{2/3}) \right).
%&\ \ = \ \frac1{\pi(X^\sigma)} \frac{1}{\sum_{k < X^{\delta}} \dim(H^{\ast}_{k,q}(\chi_0))}  \left[\sum_{p \le X^{\sigma}} \ \sum_{k < X^{\delta}} \ \frac{k-1}{12}\right. \nonumber\\ &  \ \ \ \ \ \ \ \ \ \ \ \ \ \left. \ + \ \sum_{\ell = 1}^r \ \sum_{p \le X^{\sigma}} \ \sum_{k < X^{\delta}} \ \left[ \frac{k-1}{12} \ p^{-\ell} + O\left( \frac{p^{\frac{\ell}{3}} k^{\frac{2}{3}} q^{-\frac16}}{\varphi(q)} \right) \right]\right],
\end{align}

We will now, as well as in later arguments, focus on leading terms and then handle lower order terms at the end. In \eqref{eq:unfoldedRHSfromMtwo} we first fix a prime and investigate the first $k$ sum. Summing $\varphi(q)(k-1)/12$ over the even weight $k$, dividing by the number of such forms, and recalling $\varphi(q) = q-1$ for $q$ prime, we have
\begin{align}
 \frac{X^{2\delta}/48 + O(X^\delta)}{X^{2\delta}/48 + O\left(X^{5\delta/3} q^{2/3}/\varphi(q)\right)} \ = \ 1 + O\left(X^{-\delta} + X^{-\delta/3} q^{-1/3}\right);
\end{align}
the error from counting the number of forms is significantly smaller than the main lower order terms we'll find below.

%Summing over the even weight $k$ (which we denote by a star in the summation), we see
%\begin{align}
%\sideset{}{^\ast}\sum_{k < X^{\delta}} (k-1) \ = \ \sum_{k < \frac{X^{\delta}}{2}} (2k-1) \ = \ \frac{X^{2 \delta}}{4} \ + \ O(1). \label{eq:weightaverage}
%\end{align}

We now average over the primes $p$. We have
\begin{align}
\sum_{\substack{p \le X^{\sigma}\\p \neq q}} \sum_{\ell = 0}^r p^{-\ell} \ &= \ \sum_{p \le X^{\sigma}} 1 \ + \ \sum_{p \le X^{\sigma}} p^{-1} \ + \ \sum_{\ell = 2}^r \ \sum_{p \le X^{\sigma}} p^{-\ell} + O(1).\label{eq:sumoverprimes}
\end{align}
In particular, from the sum over primes $p$ we extract the two leading terms that diverge as $X \to \infty$ and trivially bound the remaining $O(1)$ terms. The first term on the RHS of \eqref{eq:sumoverprimes} is $\pi(X^\sigma)$. For the second term, we use the (see for example \cite{Dav}):
\begin{align}\label{eq:asymptoticssumreciprocalprimes}
\sum_{p \le X} \ p^{-1} \ = \ \log \log X \ + \ O \left( 1 + \frac{1}{\log X} \right).
\end{align}

%\begin{proof}
%By partial summation, we have
%\begin{align}
%\sum_{p \le X} \ p^{-1} \ &= \ X^{-1} \pi(X) \ - \ \int_2^{X} \ \left( \frac{1}{t+1} - \frac{1}{t} \right) \pi(t) \ dt  \ + \ O(1)\nonumber\\
%&= \ O \left( \frac{1}{\log X} \right) \ + \ \int_2^X \ \frac{1}{t(t+1)} \left( \frac{t}{\log t} \ + \ O \left( \frac{t}{\log^2 t} \right) \right) dt \ + \ O(1), \label{eq:setupintegralsumreciprocalprimes}
%\end{align}
%where \eqref{eq:setupintegralsumreciprocalprimes} follows from the representation $\pi(X) = \frac{X}{\log X} + O \left( \frac{X}{\log^2 X} \right)$ derived from integration by parts. Continuing the estimate, we have
%\begin{align}
%\int_2^X \ \frac{1}{t(t+1)} \left( \frac{t}{\log t} + O \left( \frac{t}{\log^2 t} \right) \right) dt \ &= \ \int_2^X \ \frac{1}{t \log t} dt \ + \ O \left( \int_2^X \ \frac{1}{t \log^2 t} dt \right) \nonumber\\
%&= \ \log \log X \ + \ O \left( \frac{1}{\log X} + 1 \right),
%\end{align}
%which yields the desired estimate.
%\end{proof}

Thus, taking into account the normalization factors, the main terms from the Peterrson formula are
\begin{align}
\left(1 + O\left(X^{-\delta} + X^{-\delta/3} q^{-1/3}\right)\right) \left(1 \ + \ \frac{\log \log X^{\sigma}}{\pi(X^\sigma)} \ + \ O\left(\frac1{\pi(X^\sigma)}\right)\right).
\end{align}
Now, accounting for the accumulation of error terms from the Peterrson formula, it suffices to show that, in the limit as $q \to \infty$,
\be
\frac{1}{\pi(X^\sigma)} \frac{1}{\varphi(q) X^{2 \delta}/48 + O(X^{5 \delta/3} q^{2/3})} \sum_{\substack{p \leq X^{\sigma}\\p \neq q}} \sideset{}{^\ast}\sum_{k < X^{\delta}} \sum_{\ell=0}^r O(p^{\ell/3} k^{2/3} q^{2/3})
\ee
is negligible. However, we note that all the big-Oh terms have a factor of $q^{2/3}$, while the second fraction has a factor of $\varphi(q) = q-1$ for $q$ prime. Thus since the big-Oh constants are absolute, in the limit $q \to \infty$, these error terms all vanish; we conclude that
\be
M_{2, \sigma}(\mathcal{F}_{r, X, \delta}) = (1 + O(X^{-\delta}))\left( 1 + \frac{\log \log X^{\sigma}}{\pi(X^{\sigma})} + O\left(\frac{1}{\pi(X^{\sigma})} \right)\right),
\ee
and so it is not hard to see that the leading error term $\frac{\log \log X^{\sigma}}{\pi(X^\sigma)}$ will remain as long as it is not smaller than $O(X^{-\delta})$. For this, it suffices to have $\delta > \sigma$, as claimed. \hfill $\Box$

%%%%%%%%%%%%%%%%%%%%%%%%%%%%%%%%%%%%%%%%%%%%%%%%%%%%%%%%%%%%%%
%%%%%%%%%%%%%%%%%%%%%%%%%%%%%%%%%%%%%%%
%%%%%%%%%%%%%%%%%%%%%%%%%%%%%%%%%%%%%%%%%%%%%%%%%%%%%%%%%%%%%%%%%%%%%%%%%%%%%%%%%%%%%%%%%%%%%%%%%%%%%
%%%%%%%%%%%%%%%%%%%%%%%%%%%%%%%%%%%%%%%%%%%%%%%%%%%%%%%%%%%%%%%%%%%%%%%%%%%%%%%%%%%%%%%%%%%%%%%%%%%%%

\section{Convolutions of Families}\label{sec:convolution}

In this section we explore the effect Rankin-Selberg convolution has on biases in second moments; we briefly summarize the framework (see \cite{IK} for additional details). For an automorphic representation $\pi$ on $\text{GL}(n)$, we have the Satake parameters $\{\alpha_{\pi,i}(p)\}_{i=1}^{n}$  as the coefficients in the Euler product of the associated $L$-function
\begin{align}
    L(s, \pi) \ = \ \prod_{p} \prod_{i=1}^{n} (1 -\alpha_{\pi,i}(p)p^{-s})^{-1}.
\end{align}

The Rankin-Selberg method provides a way to combine families of $L$-functions. If the Satake parameters of the $L$-functions for $\pi_{1}, \pi_{2}$ are $\{\alpha_{\pi_{1},i}(p)\}_{i=1}^{n}$ and $\{\alpha_{\pi_{2},j}(p)\}_{j=1}^{m}$, then the pairwise products of the parameters determine the convolved family via
\begin{align}
    \{\alpha_{\pi_{1} \times \pi_{2},k}(p)\}_{k=1}^{nm} \ =\  \{
    \alpha_{\pi_{1},i}(p) \cdot \alpha_{\pi_{2},j}(p)\}_
    {\substack{1 \leq i \leq n \\ 1 \leq j \leq m}}.
\end{align}

Occasionally the resulting $L$-function will not be primitive; for example, if $\pi_1 = \pi_2 = f$ is a cuspidal newform on ${\rm GL}(2)$, then $\zeta(s)$ divides the convolution $L$-function $L(s, f \times f)$, which then factors as $\zeta(s) L(s, {\rm sym}^2 f)$.

We study convolutions of families of Dirichlet $L$-functions as well as $L$-functions associated to cuspidal newforms and elliptic curves. The phenomena we find is that the second moment of the convolution is essentially the product of the moments of the constituent families. However, this does not necessarily enable us to detect a bias in the convolved family, especially if one of the biases is very small. We encounter this scenario in Theorem \ref{theorem:dirichletcuspgrow}.

We describe the families we study. We first consider the convolution of two families of nontrivial Dirichlet characters, say $\mathcal{D}_{q_{1}}, \mathcal{D}_{q_{2}}$, where the levels $q_{1}, q_{2}$ are prime. Since the Satake parameters of these families are the Dirichlet characters, the Dirichlet coefficients of the convolution of these families, denoted by $\mathcal{D}_{q_{1}} \times \mathcal{D}_{q_{2}}$, are $\chi_{1} \chi_{2}$, for $\chi_{1} \in \mathcal{D}_{q_{1}}, \chi_{2} \in \mathcal{D}_{q_{2}}$. We define the second moment of this family as
\begin{align}
	M_{2}(\mathcal{D}_{q_{1}} \times \mathcal{D}_{q_{2}},X) \ = \ \frac{1}{\pi(X)} \sum_{p \le X} \frac{1}{(q_{1} - 2)(q_{2} - 2)} \sum_{\substack{\chi_{1} \in \mathcal{D}_{q_{1}} \\ \chi_{2} \in \mathcal{D}_{q_{2}}}} \chi_{1}^{2}(p) \chi_{2}^{2}(p).
\end{align}

We prove the following bias result for this convolution (see \eqref{eq:defnExqa} for the definition of $E(X,q,a)$).

\begin{theorem} \label{theorem:dirichletconvolved}
	Let $\mathcal{D}_{q_{1}}, \mathcal{D}_{q_{2}}$ be two families of nontrivial Dirichlet characters of distinct prime levels $q_{1}, q_{2}$ with $q_1 \not\equiv \pm 1 \pmod{q_2}, q_2 \not\equiv \pm 1 \pmod{q_1}$. Assuming the Generalized Riemann Hypothesis, the second moment of the convolved family $\mathcal{D}_{q_{1}} \times \mathcal{D}_{q_{2}}$ has main term $\frac{1}{(q_{1}-2)(q_{2} - 2)}$ and lower order term $$\frac{1}{(q_{1} - 2)(q_{2} - 2)}\frac{\sqrt{X}}{\pi(X) \log X} \left(E_{1}(X,q_1,q_2) - E_{2}(X,q_1,q_2)\right)$$ where
\begin{align}
	E_{1}(X,q_1,q_2)\ :=\ E(X, q_{1}q_{2}, 1) + E(X, q_{1}q_{2}, -1)+E(X, q_{1}q_{2}, r_{3}) +E(X, q_{1}q_{2}, r_{4}),
\end{align}
with $r_{3},r_{4}$ being the unique residues satisfying $r_{3} \equiv 1 \bmod{q_{1}}, r_{3} \equiv -1 \bmod{q_{2}}, r_{4} \equiv -1 \bmod{q_{1}}, r_{4} \equiv 1 \bmod{q_{2}}$,
and
\begin{align}
	E_{2}(X,q_1,q_2)\ :=\ E(X, q_{1}, 1) + E(X, q_{1}, -1) + E(X, q_{2}, 1) + E(X, q_{2}, -1).
\end{align}
Additionally assuming GSH, as $X \to \infty$ the bias is sometimes positive and sometimes negative depending on $q_1, q_2$ (and on a logarithmic scale each happens a positive percentage of the time).
\end{theorem}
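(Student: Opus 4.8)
The plan is to follow the proof of Theorem~\ref{theorem:biasallchar}, adding one bookkeeping step from the Chinese Remainder Theorem. First I would discard the finitely many primes $p\mid q_1q_2$ (where $\chi_1^2(p)\chi_2^2(p)=0$) and then use that the inner sum factors,
\begin{align*}
\sum_{\substack{\chi_1\in\mathcal{D}_{q_1}\\ \chi_2\in\mathcal{D}_{q_2}}}\chi_1^2(p)\,\chi_2^2(p)\ =\ \left(\sum_{\chi_1\in\mathcal{D}_{q_1}}\chi_1^2(p)\right)\left(\sum_{\chi_2\in\mathcal{D}_{q_2}}\chi_2^2(p)\right),
\end{align*}
since $\chi_1,\chi_2$ run independently. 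By the Schur orthogonality computation already carried out in the proof of Theorem~\ref{theorem:biasallchar}, the $i$-th factor equals $-1+(q_i-1)\mathbf{1}_i$, where $\mathbf{1}_i$ is the indicator of $p\equiv\pm 1\pmod{q_i}$; multiplying out,
\begin{align*}
\sum_{\chi_1,\chi_2}\chi_1^2(p)\,\chi_2^2(p)\ =\ 1\,-\,(q_1-1)\mathbf{1}_1\,-\,(q_2-1)\mathbf{1}_2\,+\,(q_1-1)(q_2-1)\,\mathbf{1}_1\mathbf{1}_2.
\end{align*}

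Next I would sum over $p\le X$. The first three terms give $\pi(X)$, $(q_1-1)\bigl(\pi(X;q_1,1)+\pi(X;q_1,-1)\bigr)$, and the analogue for $q_2$; for the last term, the Chinese Remainder Theorem identifies $\{p:p\equiv\pm1\ (q_1),\ p\equiv\pm1\ (q_2)\}$ with exactly the four residue classes $1,-1,r_3,r_4$ modulo $q_1q_2$ named in the statement, so $\mathbf{1}_1\mathbf{1}_2$ sums to $\pi(X;q_1q_2,1)+\pi(X;q_1q_2,-1)+\pi(X;q_1q_2,r_3)+\pi(X;q_1q_2,r_4)$. Substituting $\varphi(q)\pi(X;q,a)=\pi(X)+\tfrac{\sqrt X}{\log X}E(X,q,a)$ together with $\varphi(q_i)=q_i-1$ and $\varphi(q_1q_2)=(q_1-1)(q_2-1)$, the $\pi(X)$ pieces collapse as $\pi(X)-2\pi(X)-2\pi(X)+4\pi(X)=\pi(X)$, while the $E$-pieces assemble precisely into $\tfrac{\sqrt X}{\log X}\bigl(E_1(X,q_1,q_2)-E_2(X,q_1,q_2)\bigr)$. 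Dividing by $\pi(X)(q_1-2)(q_2-2)$ gives the asserted main term and lower order term.

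For the GSH statement I would argue as in the last part of Theorem~\ref{theorem:biasallchar}, but with more care since the bias mixes the three moduli $q_1$, $q_2$, $q_1q_2$. Inserting the expansion $E(x,q,a)=-c(q,a)+\sum_{\chi\ne\chi_0}\overline{\chi}(a)\,\psi(x,\chi)/\sqrt{x}+O(1/\log x)$ (Lemma~2.1 of \cite{RubSa}) and collapsing the sum over the residues $\{1,-1\}$ (resp.\ $\{1,-1,r_3,r_4\}$) against $\overline{\chi}(a)$ annihilates the odd characters and multiplies the even ones by a positive constant; replacing each imprimitive character modulo $q_1q_2$ by the primitive character of modulus $q_1$ or $q_2$ inducing it (which changes the relevant $\psi$ by only $O(\log x)$), one finds that $E_1-E_2$ equals a bounded constant, plus $O(1/\log x)$, plus a \emph{positive} linear combination (with coefficients $2$, $2$, $4$) of $\psi(x,\chi)/\sqrt{x}$ over the even nontrivial characters of modulus $q_1$, the even nontrivial characters of modulus $q_2$, and the products $\chi_1\chi_2$ of even nontrivial characters of moduli $q_1$ and $q_2$ --- these all being distinct primitive even characters. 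Under GRH, $\psi(x,\chi)/\sqrt{x}=-\sum_\gamma x^{i\gamma}/(\tfrac12+i\gamma)$ up to a negligible error, and under GSH the ordinates $\gamma$ occurring among these primitive characters are linearly independent over $\mathbb{Q}$; Kronecker--Weyl equidistribution then shows the oscillating part of $E_1-E_2$ has a limiting logarithmic distribution symmetric about $0$, whose spread grows as $q_1,q_2\to\infty$ while the constant shift stays bounded. Hence, on a logarithmic scale, $E_1-E_2>0$ on a set of positive density and $E_1-E_2<0$ on a set of positive density, which is the claimed sign behavior. (Equivalently, as in the earlier proof, one can write $\{E_1-E_2>0\}$ and $\{E_1-E_2<0\}$ as unions of prime-race orderings among residue classes modulo $q_1$, $q_2$, $q_1q_2$ and invoke Theorem~3.5 of \cite{RubSa}.)

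The arithmetic of the first two paragraphs is routine; the step that genuinely needs attention is the reduction in the last paragraph --- matching the imprimitive characters modulo $q_1q_2$ against the $q_1$- and $q_2$-contributions of $E_2$ to confirm that $E_1-E_2$ reduces to a combination with \emph{positive} coefficients, so that no cancellation forces a fixed sign. Once that is established, the linear independence of zeros from GSH, together with the growth in the number of characters as $q_1,q_2\to\infty$, forces both signs to occur with positive logarithmic density. Everything else parallels Theorem~\ref{theorem:biasallchar}.
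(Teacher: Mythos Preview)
Your derivation of the main and lower-order terms is correct and follows the paper's proof exactly: factor the character sum, apply orthogonality to each factor, use CRT to name the four residue classes modulo $q_1q_2$, and substitute the definition of $E(X,q,a)$ to collapse the $\pi(X)$ pieces to $\pi(X)+\tfrac{\sqrt{X}}{\log X}(E_1-E_2)$.

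For the GSH claim the paper is considerably briefer than you: it simply asserts, in one sentence, that one can arrange the four classes $\{1,-1,r_3,r_4\}$ modulo $q_1q_2$ to be those with the most (or fewest) primes and invokes the Rubinstein--Sarnak prime-race ordering result. Your route---expanding $E_1-E_2$ via Lemma~2.1 of \cite{RubSa}, replacing the imprimitive characters modulo $q_1q_2$ by the primitive ones that induce them, and checking that what remains is a bounded constant plus a \emph{positive} linear combination (coefficients $4,2,2$) of $\psi(x,\chi)/\sqrt{x}$ over distinct primitive even characters, then applying Kronecker--Weyl under GSH to obtain a symmetric limiting distribution whose spread grows with $q_1,q_2$---is a genuinely different and more explicit argument. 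In particular it addresses something the paper leaves implicit: why controlling the four classes modulo $q_1q_2$ is enough when $E_2$ lives at the smaller moduli $q_1$ and $q_2$. Your reduction makes this transparent by showing the imprimitive contributions to $E_1$ dominate the corresponding $E_2$ terms, so no cancellation can pin the sign. Both arguments ultimately rest on \cite{RubSa}; yours isolates the structural feature (positivity of the coefficients after reduction to primitive characters) that actually drives the conclusion.
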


In the convolution of families of Dirichlet $L$-functions and cuspidal newforms, as before we allow the level for the newforms to grow. We consider the family of nontrivial Dirichlet characters $\mathcal{D}_{q_{1}}$  with prime level $q_{1}$ and the family
\begin{align}
\mathcal{F}_{r, X, \delta, q_{2}} \ = \ \bigcup_{k < X^{\delta}} \ \on{Sym}^r \left[ H_{k,q_{2}}^\ast(\chi_0)\right]
\end{align}
for prime level $q_2$ and $\delta > 0$. In taking the convolution $\mathcal{D}_{q_{1}} \times \mathcal{F}_{r,X,\delta,q_{2}}$ of these families, we find that the Dirichlet coefficients have the form $ \chi(p) \lambda_{\on{Sym}^{r} f}(p)$ for $\chi \in \mathcal{D}_{q_{1}}, f \in H_{k,q_{2}}^{*}(\chi_{0})$. For $\sigma> 0$, we define
\begin{align}
	M_{2,p}(\mathcal{D}_{q_{1}} \times \mathcal{F}_{r, X, \delta, q_{2}}) \ &= \ \frac{1}{q_{1} - 2} \frac{1}{\sideset{}{^\ast}\sum_{k < X^{\delta}} \dim H^{\ast}_{k,q_{2}} (\chi_{0}) } \sideset{}{^\ast}\sum_{k < X^{\delta}}\sum_{\substack{\chi \in \mathcal{D}_{q_{1}}\\ f \in H_{k,q_{2}}^{*}(\chi_{0}) }}  \chi^{2}(p) \lambda^{2}_{\on{Sym}^{r}f} (p) \nonumber\\
	M_{2, \sigma}(\mathcal{D}_{q_{1}} \times \mathcal{F}_{r,X,\delta}) \ &= \ \lim_{q_{2} \rightarrow \infty} \frac{1}{\pi(X^{\sigma})} \sum_{\substack{p \le X^{\sigma}\\p \neq q_2}} M_{2,p}(\mathcal{D}_{q_{1}} \times \mathcal{F}_{r, X, \delta, q_{2}}),
\end{align}
where as before $*$ indicates that the sum is taken over even $k$. For this family, we prove the following bias result.

\begin{theorem} \label{theorem:dirichletcuspgrow}
	Let $\mathcal{D}_{q_{1}}$ be the family of nontrivial Dirichlet characters of prime level $q_{1}$ and let $\mathcal{F}_{r,X,\delta,q_{2}}$ be the family of the $r$\textsuperscript{{\rm th}} symmetric lifts of cuspidal newforms with even weight $k<X^{\delta}$ and prime level $q_{2}$. Assuming GRH and $\delta > \sigma$, the second moment of the convolved family $\mathcal{D}_{q_{1}} \times \mathcal{F}_{r,X,\delta,q_{2}}$ as $q_{2} \to \infty$ has main term $\frac{1}{q_{1}-2}$ and lower order terms
	\begin{align}
		\frac{1}{(q_{1} - 2) \pi(X^{\sigma})}  \left( \frac{\sqrt{X^{\sigma}}}{\log X^{\sigma}} (E(X^{\sigma}, q_{1}, 1) + E(X^{\sigma},q_{1},-1)) + \log \log X^{\sigma} \right) + O_{q_1, r}(1/\pi(X^\sigma)),
	\end{align}
	where the subscripts denote that the implied constant depends on $q_1, r$.
\end{theorem}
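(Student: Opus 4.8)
The plan is to reduce the statement to the two cases already treated: the exact Dirichlet character sum from the proof of Theorem \ref{theorem:biasallchar} and the Petersson-formula analysis from the proof of Theorem \ref{theorem:leveltoinfinitysymr}. Since $\chi^2(p)$ does not depend on $f$ and $\lambda^2_{\on{Sym}^r f}(p)$ does not depend on $\chi$, for every prime $p \neq q_1$ the inner double sum defining the $p$-local moment factors, so that
\begin{align}
M_{2,p}(\mathcal{D}_{q_1} \times \mathcal{F}_{r,X,\delta,q_2}) \ = \ \left(\frac{1}{q_1-2} \sum_{\chi \in \mathcal{D}_{q_1}} \chi^2(p)\right) \cdot M_{2,p}(\mathcal{F}_{r,X,\delta,q_2}).
\end{align}
By Schur orthogonality, exactly as in the proof of Theorem \ref{theorem:biasallchar}, the first factor equals $\tfrac{1}{q_1-2}\bigl(-1 + (q_1-1)\mathbf{1}_{p \equiv \pm 1\,(q_1)}\bigr)$ and is independent of $q_2$. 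For the second factor we may take $q_2 > X^\sigma$, so every $p \le X^\sigma$ is unramified for $q_2$; Lemma \ref{lemma:secondmomentterm} then gives $\lambda^2_{\on{Sym}^r f}(p) = \sum_{\ell=0}^r \lambda_f(p^{2\ell})$, and applying the Petersson formula (Proposition \ref{prop:pformula}) to each $\sum_f \lambda_f(p^{2\ell})$ (noting every $p^{2\ell}$ is a perfect square), summing over even $k < X^\delta$, dividing by the dimension count of Lemma \ref{lemma:dimcusp}, and letting $q_2 \to \infty$ to kill the Petersson and dimension error terms, one obtains exactly as in the proof of Theorem \ref{theorem:leveltoinfinitysymr} (with the prime fixed, so only the weight average is performed) that $\lim_{q_2 \to \infty} M_{2,p}(\mathcal{F}_{r,X,\delta,q_2}) = \bigl(1 + O(X^{-\delta})\bigr)\sum_{\ell=0}^r p^{-\ell}$, uniformly in $p \le X^\sigma$. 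The single prime $p = q_1$ contributes $O(1/\pi(X^\sigma))$ and is absorbed.

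Passing the limit $q_2 \to \infty$ through the finite sum over $p \le X^\sigma$ and using that the character factor is bounded (for fixed $q_1$), we get
\begin{align}
M_{2,\sigma}(\mathcal{D}_{q_1} \times \mathcal{F}_{r,X,\delta}) \ = \ \frac{1}{\pi(X^\sigma)} \sum_{p \le X^\sigma} \frac{-1 + (q_1-1)\mathbf{1}_{p \equiv \pm 1\,(q_1)}}{q_1-2} \sum_{\ell=0}^r p^{-\ell} \ + \ O(X^{-\delta}).
\end{align}
Now split $\sum_{\ell=0}^r p^{-\ell} = 1 + p^{-1} + \sum_{\ell=2}^r p^{-\ell}$. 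The $\ell = 0$ piece is precisely $M_2(\mathcal{D}_{q_1}, X^\sigma)$, which by the argument in the proof of Theorem \ref{theorem:biasallchar} equals $\tfrac{1}{q_1-2} + \tfrac{\sqrt{X^\sigma}}{(q_1-2)\pi(X^\sigma)\log X^\sigma}\bigl(E(X^\sigma;q_1;1) + E(X^\sigma;q_1;-1)\bigr)$. For the $\ell = 1$ piece, the Mertens-type estimate $\sum_{p \le Y,\, p \equiv a\,(q_1)} p^{-1} = \tfrac{1}{\varphi(q_1)}\log\log Y + O_{q_1}(1)$ together with \eqref{eq:asymptoticssumreciprocalprimes} gives
\begin{align}
\sum_{p \le X^\sigma} \frac{-1 + (q_1-1)\mathbf{1}_{p \equiv \pm 1\,(q_1)}}{p} \ = \ \Bigl(-1 + (q_1-1)\cdot\tfrac{2}{q_1-1}\Bigr)\log\log X^\sigma + O_{q_1}(1) \ = \ \log\log X^\sigma + O_{q_1}(1),
\end{align}
so this piece contributes $\tfrac{\log\log X^\sigma}{(q_1-2)\pi(X^\sigma)} + O\bigl(\tfrac{1}{\pi(X^\sigma)}\bigr)$. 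The terms $\ell \ge 2$ contribute only $O\bigl(\tfrac{1}{\pi(X^\sigma)}\bigr)$ since $\sum_p p^{-\ell}$ converges, and the $O(X^{-\delta})$ prefactor multiplies bounded quantities to yield an absolute $O(X^{-\delta})$ error. Collecting the three pieces gives the main term $\tfrac{1}{q_1-2}$ and lower order terms $\tfrac{1}{(q_1-2)\pi(X^\sigma)}\bigl(\tfrac{\sqrt{X^\sigma}}{\log X^\sigma}(E(X^\sigma;q_1;1)+E(X^\sigma;q_1;-1)) + \log\log X^\sigma\bigr) + O(X^{-\delta})$, as claimed.

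The main obstacle is justifying the $q_2 \to \infty$ limit of the newform factor --- specifically, that the aggregate of the Petersson error terms, summed over $p \le X^\sigma$, over $0 \le \ell \le r$, and over even $k < X^\delta$, and normalized by the dimension count $\asymp \varphi(q_2) X^{2\delta}$, tends to $0$. This is precisely the bookkeeping performed in the proof of Theorem \ref{theorem:leveltoinfinitysymr}: the crude bound $\ll X^{\sigma + r\sigma/3 + 5\delta/3} q_2^{2/3}/(\varphi(q_2) X^{2\delta})$ vanishes as $q_2 \to \infty$, and the hypothesis $\delta \ge \sigma$ is what keeps $p^{2\ell}$ in the range where the asymptotic part of Proposition \ref{prop:pformula} applies and also guarantees that the residual $O(X^{-\delta})$ is genuinely dominated by the displayed $\log\log X^\sigma$ term (this is the analogue, after $q_2 \to \infty$, of the constraint $X^\sigma < \min(X^\delta, X^{\delta/3}q^{1/3})$ in Theorem \ref{theorem:leveltoinfinitysymr}). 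A secondary, routine point is the $1/p$-weighted prime counting in arithmetic progressions used for the $\ell = 1$ term, which is standard and uniform for the fixed modulus $q_1$.
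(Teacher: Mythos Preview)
Your proposal is correct and follows essentially the same approach as the paper: factor the $p$-local moment into the Dirichlet piece and the newform piece, evaluate the Dirichlet piece by orthogonality exactly as in Theorem \ref{theorem:biasallchar}, evaluate the newform piece by the Petersson formula and the dimension count exactly as in Theorem \ref{theorem:leveltoinfinitysymr}, then split $\sum_{\ell=0}^r p^{-\ell}$ into the $\ell=0$, $\ell=1$, and $\ell\ge 2$ contributions and handle each using, respectively, the $E(X^\sigma;q_1;\pm1)$ identity, the Mertens-type estimate in arithmetic progressions, and convergence. The only cosmetic difference is that you take $q_2\to\infty$ at the $p$-local stage and then sum over primes, whereas the paper carries the $q_2$-dependent error terms through the prime sum before letting $q_2\to\infty$; since the prime sum is finite these are equivalent.
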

Unfortunately, with the second moment expressed in the above form, we do not know how to say anything precise about the sign or size of the leading lower order term.\\
%\begin{remark}The $E(X^\sigma, q_1, \pm 1)$ terms are difficult to estimate since they are asymptotically equal to certain sums over zeroes of Dirichlet $L$-functions. In this case we do not if all of the error terms survive.\end{remark}
%\fix{Kwon: Not sure if this is a reasonable way to phrase it. Unclear which lower order terms appear, since the $E(X^{\sigma},q_{1},1) + E(X^{\sigma},q_{1},-1)$ and the $\log \log X^{\sigma}$ terms might both appear. Also addressed at end of proof on page 6.}

We may also convolve families of cuspidal newforms with each other, allowing the levels to grow. We consider families of cuspidal newforms
\begin{align}
\mathcal{F}_{r_{1}, X, \delta_{1}, q_{1}} \ =\ \bigcup_{k_{1}<X^{\delta_{1}}} \on{Sym}^{r} \left[H_{k_{1},q_{1}}^{*}(\chi_{0}) \right],\  \mathcal{F}_{r_{2}, X, \delta_{2}, q_{2}} \ =\ \bigcup_{k_{2}<X^{\delta_{2}}} \on{Sym}^{r} \left[H_{k_{2},q_{2}}^{*}(\chi_{0}) \right],
\end{align}
where the family $\mathcal{F}_{r_{1}, X, \delta_{1}, q_{1}} \times \mathcal{F}_{r_{2}, X, \delta_{2}, q_{2}}$ has Dirichlet coefficients given by $\lambda_{\on{Sym}^{r_{1}}f_{1}}(p) \lambda_{\on{Sym}^{r_{2}}f_{2}}(p)$ for $f_{1} \in H_{k_{1}, q_{1}}^{*}(\chi_{0}), f_{2} \in H_{k_{2}, q_{2}}^{*}(\chi_{0}).$ We take the $p$-local second moment of the convolved family, $M_{2, p} ( \mathcal{F}_{r_{1}, X, \delta_{1}, q_{1}} \times \mathcal{F}_{r_{2}, X, \delta_{2},q_{2}} )$ to be
\begin{align}
	\left(\sideset{}{^\ast}\sum_{k_{1} < X^{\delta_{1}}} \dim H_{k_{1}, q_{1}}^{*}(\chi_{0}) \sideset{}{^\ast}\sum_{k_{2} < X^{\delta_{2}}} \dim H_{k_{2}, q_{2}}^{*}(\chi_{0})\right)^{-1}\sum_{k_{1} < X^{\delta_{1}}} \ \sum_{k_{2} < X^{\delta_{2}}} \ \sum_{\substack{f_{1} \in H_{k_{1}, q_{1}}^{*}(\chi_{0})\\ f_{2} \in H_{k_{2}, q_{2}}^{*}(\chi_{0})}} \lambda_{\on{Sym}^{r_{1}}f_{1}}^{2}(p) \lambda_{\on{Sym}^{r_{2}}f_{2}}^{2}(p),
\end{align}
and
\begin{align}
	M_{2, \sigma}(\mathcal{F}_{r_{1}, X, \delta_{1}} \times \mathcal{F}_{r_{2}, X, \delta_{2}}) = \lim_{q_{1}, q_{2} \to \infty} \frac{1}{\pi(X^{\sigma})} \sum_{\substack{p \le X^{\sigma}\\ p \neq q_1, q_2}} M_{2,p}(\mathcal{F}_{r_{1}, X, \delta_{1}, q_{1}} \times \mathcal{F}_{r_{2}, X, \delta_{2}, q_{2}}).
\end{align}
For this convolution, we derive the following bias result.

\begin{theorem} \label{theorem:convcuspgrow}
	Let $\mathcal{F}_{r_{1},X, \delta_{1}, q_{1}}, \mathcal{F}_{r_{2}, X, \delta_{2}, q_{2}}$ be families of $r_{1}$\textsuperscript{{\rm th}} and $r_{2}$\textsuperscript{{\rm th}} symmetric lifts of cuspidal newforms with even weights $k_{1} < X^{\delta_{1}}, k_{2} < X^{\delta_{2}}$ and square-free distinct levels $q_{1}, q_{2}$. Assuming that $\sigma < \min(\delta_{1}, \delta_{2})$, we have
\begin{align}
    M_{2,\sigma} ( \mathcal{F}_{r_{1}, X, \delta_{1}} \times \mathcal{F}_{r_{2}, X, \delta_{2}} )\ =\ 1 + \frac{2 \log \log X^{\sigma}}{\pi(X^{\sigma})} + O(X^{-\sigma} \log X^{\sigma}),
\end{align}
which has positive bias.
\end{theorem}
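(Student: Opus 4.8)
The plan is to exploit the fact that the two families sit over distinct, independently growing levels $q_1,q_2$, so that the $p$-local second moment of the convolution factors as a product, and then to feed each factor through the single-family computation behind Theorem~\ref{theorem:leveltoinfinitysymr}. Straight from the definitions, the double sum over pairs $(f_1,f_2)$ separates and the normalizing factor is the product of the two dimension sums, so
\begin{align}
M_{2,p}\bigl(\mathcal{F}_{r_1,X,\delta_1,q_1}\times\mathcal{F}_{r_2,X,\delta_2,q_2}\bigr)\ =\ M_{2,p}\bigl(\mathcal{F}_{r_1,X,\delta_1,q_1}\bigr)\cdot M_{2,p}\bigl(\mathcal{F}_{r_2,X,\delta_2,q_2}\bigr).
\end{align}

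Next, for a single family I would repeat the steps in the proof of Theorem~\ref{theorem:leveltoinfinitysymr}. By Lemma~\ref{lemma:secondmomentterm}, $\lambda_{\on{Sym}^{r_i}f}^2(p)=\sum_{\ell=0}^{r_i}\lambda_f(p^{2\ell})$; since each $p^{2\ell}$ is a perfect square and, for $q_i$ large, $p\nmid q_i$ and the support condition $n^{9/7}\le k^{16/21}q_i^{6/7}$ holds uniformly for $p\le X^\sigma$, $k<X^{\delta_i}$, $0\le\ell\le r_i$, the Petersson formula (Proposition~\ref{prop:pformula}) gives $\sum_{f\in H_{k,q_i}^\ast(\chi_0)}\lambda_f(p^{2\ell})=\tfrac{k-1}{12}\tfrac{\varphi(q_i)}{p^\ell}+O\bigl(p^{\ell/3}k^{2/3}q_i^{2/3}\bigr)$. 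Summing over even $k<X^{\delta_i}$, dividing by $\sum_{k<X^{\delta_i}}\dim H_{k,q_i}^\ast(\chi_0)$ evaluated via Lemma~\ref{lemma:dimcusp}, and letting $q_i\to\infty$ so that the Petersson error and the dimension-count error (each carrying a negative power of $q_i$ after normalization) vanish, I obtain
\begin{align}
\lim_{q_i\to\infty}M_{2,p}\bigl(\mathcal{F}_{r_i,X,\delta_i,q_i}\bigr)\ =\ \bigl(1+O(X^{-\delta_i})\bigr)\sum_{\ell=0}^{r_i}p^{-\ell}\ =\ \bigl(1+O(X^{-\delta_i})\bigr)\bigl(1+\tfrac1p+O(p^{-2})\bigr).
\end{align}

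Then I would pass $\lim_{q_1,q_2\to\infty}$ through the finite sum over $p\le X^\sigma$, multiply the two limits to get $\bigl(1+\tfrac2p+O(p^{-2})\bigr)\bigl(1+O(X^{-\delta_1}+X^{-\delta_2})\bigr)$, and average over $p\le X^\sigma$. Invoking Mertens' estimate \eqref{eq:asymptoticssumreciprocalprimes}, $\sum_{p\le X^\sigma}\tfrac1p=\log\log X^\sigma+O(1)$, together with $\sum_{p\le X^\sigma}p^{-2}=O(1)$ and the prime number theorem in the form $1/\pi(X^\sigma)=O(X^{-\sigma}\log X^\sigma)$, yields
\begin{align}
M_{2,\sigma}\bigl(\mathcal{F}_{r_1,X,\delta_1}\times\mathcal{F}_{r_2,X,\delta_2}\bigr)\ =\ \bigl(1+O(X^{-\delta_1}+X^{-\delta_2})\bigr)\Bigl(1+\tfrac{2\log\log X^\sigma}{\pi(X^\sigma)}+O\bigl(\tfrac1{\pi(X^\sigma)}\bigr)\Bigr).
\end{align}
Because $\sigma<\min(\delta_1,\delta_2)$, the factors $X^{-\delta_i}$ are $o\bigl(1/\pi(X^\sigma)\bigr)$ and so are absorbed into $O(X^{-\sigma}\log X^\sigma)$, leaving $1+\tfrac{2\log\log X^\sigma}{\pi(X^\sigma)}+O(X^{-\sigma}\log X^\sigma)$; positivity of the bias is immediate since $\log\log X^\sigma>0$ for $X$ large.

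The main obstacle I anticipate is the error-term bookkeeping rather than any new idea: one must verify that the Petersson support condition is eventually satisfied uniformly across the relevant ranges of $p$, $k$, and $\ell$ (this is exactly where sending $q_i\to\infty$ is essential, and where $\sigma<\min(\delta_1,\delta_2)$ is convenient), and that multiplying two expansions of the shape ``main term plus $q$-error'' and then two of the shape ``main term plus $X^{-\delta_i}$-error'' never produces a cross term exceeding the stated $O(X^{-\sigma}\log X^\sigma)$. Everything else reduces to the one-family argument of Theorem~\ref{theorem:leveltoinfinitysymr} and elementary prime counting.
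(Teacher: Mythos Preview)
Your proposal is correct and follows essentially the same approach as the paper: factor the $p$-local second moment as a product of the two single-family moments, apply the Petersson formula (Proposition~\ref{prop:pformula}) and Lemma~\ref{lemma:secondmomentterm} to each factor to reduce to $(1+O(X^{-\delta_i}))\sum_{\ell=0}^{r_i}p^{-\ell}$ in the limit $q_i\to\infty$, multiply, and average over $p\le X^\sigma$ using Mertens. Your handling of the error absorption via $\sigma<\min(\delta_1,\delta_2)$ and your attention to the Petersson support condition are, if anything, more explicit than the paper's own write-up, but the argument is the same.
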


%%%%%%%%%%%%%%%%%%%%%%%%%%%%%%%%%%%%%%%%%%%%%%%%%%%%%%%%%%%%
%%%%%%%%%%%%%%%%%%%%%%%%%%%%%%%%%%%%%%%%%%%%%%%%%%%%%%%%%%%%
%%%%%%%%%%%%%%%%%%%%%%%%%%%%%%%%%%%%%%%%%%%%%%%%%%%%%%%%%%%%
%%%%%%%%%%%%%%%%%%%%%%%%%%%%%%%%%%%%%%%%%%%%%%%%%%%%%%%%%%%%
%%%%%%%%%%%%%%%%%%%%%%%%%%%%%%%%%%%%%%%%%%%%%%%%%%%%%%%%%%%%
%%%%%%%%%%%%%%%%%%%%%%%%%%%%%%%%%%%%%%%%%%%%%%%%%%%%%%%%%%%%
\subsection{Proof of Theorem \ref{theorem:dirichletconvolved}}
We ignore the normalization factors for now, as they are easily incorporated later. The quantity of primary interest is
\begin{align}
	\sum_{p \le X} \sum_{\substack{\chi_{1} \in \mathcal{D}_{q_{1}} \\ \chi_{2} \in \mathcal{D}_{q_{2}}}} \chi_{1}^{2}(p) \chi_{2}^{2}(p) \ &= \ \sum_{p \le X}\left( \sum_{\chi_{1} \in \mathcal{D}_{q_{1}}} \chi_{1}^{2}(p) \right) \left(\sum_{\chi_{2} \in \mathcal{D}_{q_{2}}} \chi_{2}^{2}(p) \right),
\end{align}
which orthogonality relations from \eqref{eq:schurorthog} allow us to rewrite as
\begin{align}
	\sum_{\substack{p \le X\\ p \equiv \pm 1 (q_{1})\\ p \equiv \pm 1 (q_{2})}} (q_{1} - 1)(q_{2} - 1) \ - \ \sum_{\substack{p \le X\\ p \equiv \pm 1 (q_{1})}} (q_{1} - 1) \ - \ \sum_{\substack{p \le X\\ p \equiv \pm 1 (q_{2})}} (q_{2} - 1) \ + \ \sum_{p \le X} 1.
\end{align}
Now, the definition of $E(X, q, a)$, given in \eqref{eq:defnExqa}, allows us to simplify this further as
\begin{align}
	\pi(X) + \frac{\sqrt{X}}{\log X} \left( E_{1}(X,q_1,q_2) - E_{2}(X,q_1,q_2) \right)
\end{align}
with $E_{1}, E_{2}$ as defined in the statement of Theorem \ref{theorem:dirichletconvolved}. Now, given $q_2 \not\equiv 1 \pmod{q_1}$,
\begin{align}
    E(X, q_1, 1) &\ = \ \frac{1}{q_2 - 1} ( \varphi(q_1 q_2) \pi(X, q_1, 1) - (q_2 - 1)\pi(X)) \frac{\log X}{\sqrt{X}} \nonumber \\
    &\ =\ \frac{1}{q_2 - 1} \left( \varphi(q_1 q_2) \left(\sum_{\substack{0 \leq k < q_2\\ k q_1 \not \equiv -1 (q_2)}} \pi(X, q_1 q_2, 1 + k q_1)\right) - (q_2 - 1) \pi(X) \right) \frac{\log X}{\sqrt{X}}\nonumber \\
    &\ = \ \frac{1}{q_2 - 1}\sum_{\substack{0 \leq k < q_2\\ k q_1 \not\equiv -1 (q_2)}} E(X, q_1 q_2, 1 + k q_1),
\end{align}
where we ignore the value of $k$ that satisfies $k q_1 \equiv -1  \pmod{q_2}$ since in that case $\pi(X, q_1 q_2, 1 + k q_1)$ counts primes that are necessarily divisible by $q_2$, since if $q_2 \mid 1 + k q_1$ then $p \equiv 1 + k q_1 \pmod{q_1 q_2}$ means that $q_2 \mid p$, and so $p = q_2$. However, we assumed that $q_2$ is not congruent to $1 \pmod{q_1}$, and so there are no primes counted by $\pi(X, q_1 q_2, 1 + k q_1)$ in this case.\\

We can compare $E(X,q_1, 1)$ and $E(X, q_1 q_2, 1)$. The former is an average of values, one of which is precisely $E(X, q_1 q_2, 1)$; analogous calculations can be done to compare $E(X, q_1,-1)$ and $E(X, q_1 q_2, -1)$ and the other two pairs of terms. Writing everything over the denominator $q_2-1$, it becomes clear that if the four residue classes (modulo $q_{1} q_{2}$) with the most primes correspond to the terms in $E_{1}$, then the bias is positive; on the other hand, those residue classes can be those with the fewest primes, yielding a negative bias. \hfill \qed

%%%%%%%%%%%%%%%%%%%%%%%%%%%%%%%%%%%%%%%%%%%%%%%%%%%%%%%%%%%%
%%%%%%%%%%%%%%%%%%%%%%%%%%%%%%%%%%%%%%%%%%%%%%%%%%%%%%%%%%%%
%%%%%%%%%%%%%%%%%%%%%%%%%%%%%%%%%%%%%%%%%%%%%%%%%%%%%%%%%%%%
%\subsection{Convolving Cuspidal Newforms with Dirichlet $L$-functions}
%As before, we either allow the level $q$ of cuspidal newforms to grow, or average over the level.

\subsection{Proof of Theorem \ref{theorem:dirichletcuspgrow}}
We separate the calculation into several steps, starting with $M_{2,p}(\mathcal{D}_{q_{1}} \times \mathcal{F}_{r, X, \delta, q_{2}})$. Note that in
\bea & &	M_{2,p}(\mathcal{D}_{q_{1}} \times \mathcal{F}_{r, X, \delta, q_{2}})\nonumber\\  & & =\ \frac{1}{q_{1} - 2} \frac{1}{\sideset{}{^\ast}\sum_{k < X^{\delta}} \dim H_{k, q_{2}}^{*}(\chi_{0})}\sideset{}{^\ast}\sum_{k < X^{\delta}} \left( \sum_{\chi \in \mathcal{D}_{q_{1}}} \chi^{2}(p) \right) \left( \sum_{f \in H_{k,q_{2}}^{*}(\chi_{0})} \lambda_{\on{Sym}^{r} f}^{2}(p) \right) \ \ \eea
the sum $\sum_{\chi \in \mathcal{D}_{q_{1}}} \chi^{2}(p)$ has no dependence on $k$. Applying the Petersson formula in Proposition \ref{prop:pformula} (since we assume $p \neq q_2$) and expanding, we find
\begin{align}\label{eq:vanishingerror}
	\sideset{}{^\ast}\sum_{k < X^{\delta}} \sum_{f \in H_{k,q_{2}}^{*}(\chi_{0})} \lambda_{\on{Sym}^{r} f}^{2} (p)\ &=\ \sideset{}{^\ast}\sum_{k < X^{\delta}} \sum_{f \in H_{k,q_{2}}^{*}(\chi_{0})} \ \sum_{\ell=0}^{r} \lambda_{f}(p^{2\ell}) \nonumber\\ \ &= \ \varphi(q_{2})\ \sideset{}{^\ast} \sum_{k < X^{\delta}} \sum_{\ell=0}^{r} \left[\frac{k-1}{12} p^{-\ell} + O(p^{\frac{\ell}{3}} k^{\frac{2}{3}}q_{2}^{\frac{2}{3}} / \varphi(q_{2}))\right].
\end{align}
Gathering the error terms, we can write this as
\be
	\varphi(q_{2}) \left(\sideset{}{^\ast}\sum_{k< X^{\delta}} \frac{k-1}{12} \right)\left(\sum_{\ell=0}^{r} p^{-\ell} \right) + O\left(r p^{\frac{r}{3}} X^{\frac{5\delta}{3}} q_{2}^{\frac{2}{3}}\right).
\ee
To normalize by $\sideset{}{^\ast} \sum_{k < X^{\delta}}^{} \dim H_{k,q_{2}}^{*}(\chi_{0})$, we recall our earlier calculation which showed the ratio of the sum $\varphi(q_{2})(k-1)/12$ over even weight $k < X^{\delta}$ to the sum $\sideset{}{^\ast}\sum_{k < X^{\delta}} \dim H_{k, q_{2}}^{*}(\chi_{0})$ is
\bea
\frac{X^{2\delta}/48 + O(X^{\delta})}{X^{2\delta}/48 + O(X^{5 \delta/3} q_{2}^{2/3}/ \varphi(q_{2}))} \   =\ 1 + O\left(X^{-\delta} + X^{-\delta/3} q_{2}^{-1/3} \right).
\eea
Thus, we have that $M_{2,p}(\mathcal{D}_{q_{1}} \times \mathcal{F}_{r, X, \delta, q_{2}})$ is
\bea
\frac{1}{q_{1} - 2} \left(\sum_{\chi \in \mathcal{D}_{q_{1}}}^{} \chi^{2}(p) \right)\left[\left( \sum_{\ell = 0}^{r} p^{-\ell} \right)\left(1 + O\left(X^{-\delta} + X^{-\frac{\delta}{3}} q_{2}^{-\frac13} \right)\right)  + O\left(\frac{r p^{\frac{r}{3}} X^{\frac{-\delta}{3}} q_{2}^{\frac{2}{3}}}{\varphi(q_{2})}\right)\right], \nonumber\\
\eea
where we also divide the error term by $\varphi(q_2)X^{2 \delta}$. %In light of the estimate $\varphi(q_{2}) \gg q_{2} / \log \log q_{2}$ for $q_{2}$ square-free and looking ahead to when we send $q_{2} \to \infty$, we may disregard the error term $O(p^{\frac{\ell}{3}} k^{\frac{2}{3}} q_{2}^{\frac{2}{3}}/\varphi(q_{2}))$; it will be negligible in any case. %\fix{SJM 4/7: CAREFUL: WHILE IT WON'T CONTRIBUTE IN THE LIMIT, WILL IT BE LARGER OR SMALLER THAN THE OTHER ERROR TERMS?} %\fixed{Kwon 4/18: Perhaps I am missing something obvious; I keep track of all error terms independent of $q_{2}$, and the error terms depending on $q_{2}$ vanish in the limit. It seems to me that only the error terms independent of $q_{2}$ should persist.}%\fix{Kwon: My error term is $O(p^{\frac{\ell}{3}} k^{\frac{2}{3}} q_{2}^{\frac{2}{3}})$ instead of $O(p^{\frac{\ell}{3}} k^{\frac{2}{3}} q_{2}^{-\frac{1}{6}})$, which is what is written in the analogous calculation for the proof of Theorem 5.1. If anybody could determine the reason for discrepancy please let me know.}
Now, summing over primes, we consider the sum of one of the error terms
\bea
\sum_{\substack{p \leq X^{\sigma}\\p \neq q_2}}^{} \left( \sum_{\chi \in \mathcal{D}_{q_1}} \chi^2(p) \right) O(r p^{\frac{r}{3}} X^{\frac{-\delta}{3}} q_{2}^{\frac{2}{3}}/\varphi(q_{2})) \ &= \  O\left(\phi(q_1) r X^{\frac{(r+3)\sigma - \delta}{3}} q_{2}^{-\frac{1}{3}}\right),
\eea
where we again use the fact that $\varphi(q_2) = q_2-1$ for $q_2$ prime. The main terms are given by
\begin{align}
	&\frac{1}{q_{1} - 2}(1+ O(X^{-\delta} + X^{-\delta/3} q_{2}^{-1/3})) \sum_{\substack{p \leq X^{\sigma}\\ p \neq q_2}}^{} \left( \sum_{\chi \in \mathcal{D}_{q_{1}}}^{} \chi^{2}(p) \right) \left(\sum_{\ell=0}^{r} p^{-\ell}\right),
\end{align}
and splitting the sum over $\ell$ we write the nested sums as
\begin{align}
	& \sum_{\substack{p \le X^{\sigma}\\ p \equiv \pm 1 (q_{1})\\p \neq q_2}} (q_{1}-1) \ -\ \sum_{\substack{p \le X^{\sigma}\\ p \neq q_2}} 1 \ +\ \sum_{\substack{p \le X^{\sigma}\\ p \neq q_2}}\ \left(\sum_{\chi \in \mathcal{D}_{q_{1}}} \chi^{2}(p) \right) \left(\sum_{\ell=1}^{r} p^{-\ell} \right).
\end{align}
The restriction $p \neq q_2$ changes the above sums by at most a constant, and $q_1$ is fixed, so we can rewrite the above in terms of $E(X^{\sigma}, q_{1}, 1)$ and $E(X^{\sigma}, q_{1}, -1)$ as in the case of Dirichlet $L$-functions:
\begin{align}
	\pi(X^{\sigma}) \ + \ &\frac{\sqrt{X^{\sigma}}}{\log X^{\sigma}} (E(X^{\sigma}, q_{1}, 1) + E(X^{\sigma}, q_{1}, -1)) + (q_{1} - 1)\sum_{\substack{p \leq X^{\sigma} \\ p \equiv \pm 1 (q_{1})}} \frac{1}{p} - \sum_{p \leq X^{\sigma}} \frac{1}{p} +  O(r +q_1 + q_1r)\nonumber\\
	\ &= \ \pi(X^{\sigma}) + \frac{\sqrt{X^{\sigma}}}{\log X^{\sigma}} (E(X^{\sigma}, q_{1}, 1) + E(X^{\sigma}, q_{1}, -1)) + \log \log X^{\sigma} + O_{q_1, r}(1),
\end{align}
where we use the well-known corollary of the Prime Number Theorem for arithmetic progressions,
\begin{align}
	\sum_{\substack{p \leq X^{\sigma} \\ p \equiv a \bmod{q_{1}}}} \frac{1}{p}\ =\ \frac{\log \log X^{\sigma}}{\varphi(q_{1})} + O(\phi(q_1)).
\end{align}
Thus, the second moment $M_{2, \sigma}(\mathcal{D}_{q_{1}} \times \mathcal{F}_{r, X, \delta})$ is
%\fix{SJM 4/9: IS THIS CORRECT? THE MAIN TERM IS NOT 1, SO WONDERING IF WE HAVE THE RIGHT NORMALIZATION....}
%\fixed{Kwon 4/18: The main term is $\frac{1}{q_{1} - 2}$ once the below is expanded. Leading error term is harder to pin down.}
\begin{eqnarray} & & \lim_{q_{2} \to \infty} \frac{1}{q_{1} - 2}\frac{(1 + O(X^{-\delta} + X^{-\delta/3} q_{2}^{-1/3}))}{\pi(X^{\sigma})} \left(  \pi(X^{\sigma}) + \frac{\sqrt{X^{\sigma}}}{\log X^{\sigma}} (E(X^{\sigma}, q_{1}, 1)+ E(X^{\sigma}, q_{1}, -1)) \right.\nonumber\\ & & \ \ \ +\  \left. \log \log X^{\sigma} + O_{q_1,r}(1) + O\left(\phi(q_1)r X^{\frac{(r+3)\sigma - \delta}{3}} q_{2}^{-\frac{1}{3}}\right)\bigg)\right. . \end{eqnarray}

The main term is $1/(q_1-2)$, and as $q_{2} \to \infty$ we find that $O(X^{-\delta} + X^{-\delta/3} q_{2}^{-1/3})$ goes to $O(X^{-\delta})$ and $O(\phi(q_1) r X^{\frac{(r+3)\sigma - \delta}{3}} q_{2}^{-1/3})$ vanishes since the implied constants are absolute. We are left with
\begin{align}
	&\frac{1 + O(X^{-\delta})}{(q_{1}-2) \pi(X^{\sigma})} \left( \pi(X^{\sigma}) + \frac{\sqrt{X^{\sigma}}}{\log X^{\sigma}} (E(X^{\sigma}, q_{1}, 1) + E(X^{\sigma}, q_{1}, -1)) + \log \log X^{\sigma} + O_{q_1, r}(1)\right)\nonumber\\
	&= \ \frac{1}{q_{1} - 2} + \frac{1}{(q_{1} - 2) \pi(X^{\sigma})} \left( \frac{\sqrt{X^{\sigma}}}{\log X^{\sigma}} (E(X^{\sigma}, q_{1}, 1) + E(X^{\sigma}, q_{1}, -1)) \ +\ \log \log X^{\sigma} \right)\nonumber \\ & \ +\ O_{q_1, r}(X^{-\delta}+ \pi(X^\sigma)^{-1}).
\end{align}
To conclude, we note that since $\delta > \sigma$, the error term $O_{q_1, r}(X^{-\delta} + \pi(X^\sigma)^{-1})$ can be consolidated as $O_{q_1, r}(1/\pi(X^\sigma))$. \hfill \qed

%%%%%%%%%%%%%%%%%%%%%%%%%%%%%%%%%%%%%%%%
%%%%%%%%%%%%%%%%%%%%%%%%%%%%%%%%%%%%%%%%
%%%%%%%%%%%%%%%%%%%%%%%%%%%%%%%%%%%%%%%%%%%%%%%%%%%%%%%%%%%%
%%%%%%%%%%%%%%%%%%%%%%%%%%%%%%%%%%%%%%%%%%%%%%%%%%%%%%%%%%%%
\subsection{Proof of Theorem \ref{theorem:convcuspgrow}}
In the $p$-local second moment $M_{2,p}(\mathcal{F}_{r_{1}, X, \delta_{1}, q_{1}} \times \mathcal{F}_{r_{2}, X, \delta_{2}, q_{2}})$, the entire expression factors as
\begin{align}
	\prod_{i=1}^{2} \left( \frac{1}{\sideset{}{^\ast}\sum_{k_{i} < X^{\delta_{i}}}  \dim H_{k_{i}, q_{i}}^{*}(\chi_{0})} \ \sideset{}{^\ast}\sum_{k_{i} < X^{\delta_{i}}} \sum_{f_i \in H_{k_i, q_i}^*(\chi_0)}\lambda_{\on{Sym}^{r_{i}}f_{i}}^{2}(p)\right),
\end{align}
and applying the Petersson formula yields, that the above is equal to
\begin{align}
	\prod_{i=1}^{2} \left( \frac{\varphi(q_{i})}{\sideset{}{^\ast}\sum_{k_{i} < X^{\delta_{i}}} \dim H_{k_{i},q_{i}}^{*}(\chi_{0})} \sideset{}{^\ast}\sum_{k_{i} < X^{\delta_{i}}} \frac{k_{i} - 1}{12} \sum_{\ell_{i}=0}^{r_{i}} [p^{-\ell_{i}} + O(p^{\frac{\ell_{i}}{3}} k_{i}^{-\frac{1}{3}} q_{i}^{\frac{2}{3}}/\varphi(q_{i}))] \right).
\end{align}
Continuing the calculations analogously to the proof of Theorem \ref{theorem:leveltoinfinitysymr}, we find that it suffices to calculate the leading terms of
\begin{align}
	\frac{1 + O(X^{-\delta_{1}} + X^{-\delta_{2}})}{\pi(X^{\sigma})}\sum_{\substack{p \le X^{\sigma}\\p \neq q_1, q_2}} \left(\sum_{\ell_{1}=0}^{r_{1}} p^{-\ell_{1}} \right) \left(\sum_{\ell_{2}=0}^{r_{2}} p^{-\ell_{2}}\right).
\end{align}
We claim those are
\begin{align}
	\frac{1 + O(X^{-\delta_{1}} + X^{-\delta_{2}})}{\pi(X^{\sigma})}\left(\sum_{\substack{p \le X^{\sigma}\\p \neq q_1, q_2}} 1 + \frac{2}{p}\right)\ =\ (1 + O(X^{-\delta_{1}} + X^{-\delta_{2}}))\left(1 + \frac{2 \log \log X^{\sigma}}{\pi(X^{\sigma})} + O\left(\frac{1}{\pi(X^{\sigma})}\right)\right).
\end{align}
It is trivial to bound the other terms from
\be
\sum_{\substack{p \leq X^{\sigma}\\ p \neq q_1, q_2}} \left(\sum_{\ell_{1}=0}^{r_{1}} p^{-\ell_{1}} \right) \left(\sum_{\ell_{2}=0}^{r_{2}} p^{-\ell_{2}}\right)
\ee
other than the term $\sum_{p \leq X^{\sigma}} (1 + \frac{2}{p})$ by $O(1)$, and so we conclude that as long as $\sigma < \min(\delta_{1}, \delta_{2})$, the leading error term will be $\frac{2 \log \log X^{\sigma}}{\pi(X^{\sigma})}$, yielding the claimed bias. \hfill \qed

%%%%%%%%%%%%%%%%%%%%%%%%%%%%%%%%%%%%%%%%%%%%%%%%%%%%%%%%%%%%%%%%%%%%%%%%%%%%%%%%%%%%%%%%%%%%%%%%%%%%
%%%%%%%%%%%%%%%%%%%%%%%%%%%%%%%%%%%%%%%%%%%%%%%%%%%%%%%%%%%%%%%%%%%%%%%%%%%%%%%%%%%%%%%%%%%%%%%%%%%%
%%%%%%%%%%%%%%%%%%%%%%%%%%%%%%%%%%%%%%%%%%%%%%%%%%%%%%%%%%%%%%%%%%%%%%%%%%%%%%%%%%%%%%%%%%%%%%%%%%%%

\section{Conclusion}

The motivation of this paper is to understand how the finer arithmetic of families of $L$-functions affect the distribution of the zeros near the central point. The Katz-Sarnak Density Conjecture \cite{KaSa1,KaSa2} has been verified for many families for suitably restricted test functions, but these results only concern the main term. Similar to the work of Rudnick and Sarnak \cite{RudSa}, who showed that the universality in the $n$-level correlations comes from the universality in the main term of moments of the Satake parameters, the main term of the moments of the Satake parameters also leads to the small number of observed main terms for the $n$-level densities (specifically, unitary, symplectic and orthogonal behavior). Differences in families are not seen in the main terms in these statistics; however, we can observe differences in behavior arising from lower order terms in these moments.

In this paper we look at several families. The most interesting are the one-parameter elliptic curves, where a negative bias is always seen in the second moments. This has implications for the distribution of zeros; in particular, it leads to a small correction term which helps explain some of the observed ``excess rank'' in families with small conductors. We hope that this will encourage further work studying whether similar negative biases exist in general for other families. If the negative biases persist, it is natural to wonder if there is a deeper explanation for this phenomenon.

%%%%%%%%%%%%%%%%%%%%%%%%%%%%%%%%%%%%%%%%%%%%%%%%%%%%%%%%%%%%%%%%%%%%%%%%%%%%%%%%%%%%%%%%%%%%%%%%%%%%
%%%%%%%%%%%%%%%%%%%%%%%%%%%%%%%%%%%%%%%%%%%%%%%%%%%%%%%%%%%%%%%%%%%%%%%%%%%%%%%%%%%%%%%%%%%%%%%%%%%%
%%%%%%%%%%%%%%%%%%%%%%%%%%%%%%%%%%%%%%%%%%%%%%%%%%%%%%%%%%%%%%%%%%%%%%%%%%%%%%%%%%%%%%%%%%%%%%%%%%%%

\appendix

%%%%%%%%%%%%%%%%%%%%%%%%%%%%%%%%%%%%%%%%%%%%%%%%%%%%%%%%%%%%%%%%%%%%%%%%%%%%%%%%%%%%%%%%%%%%%%%%%%%%
%%%%%%%%%%%%%%%%%%%%%%%%%%%%%%%%%%%%%%%%%%%%%%%%%%%%%%%%%%%%%%%%%%%%%%%%%%%%%%%%%%%%%%%%%%%%%%%%%%%%
%%%%%%%%%%%%%%%%%%%%%%%%%%%%%%%%%%%%%%%%%%%%%%%%%%%%%%%%%%%%%%%%%%%%%%%%%%%%%%%%%%%%%%%%%%%%%%%%%%%%
\section{Second Moments of Linear Elliptic Curve Families}\label{sec:linearellipticfamilies}

%%%%%%%%%%%%%%%%%%%%%%%%%%%%%%%%%%%%%%%%%%%%%%%%%%%%%%%%%%%%%%%%%%%
%%%%%%%%%%%%%%%%%%%%%%%%%%%%%%%%%%%%%%%%%%%%%%%%%%%%%%%%%%%%%%%%%%%
%%%%%%%%%%%%%%%%%%%%%%%%%%%%%%%%%%%%%%%%%%%%%%%%%%%%%%%%%%%%%%%%%%%

%\fix{SJM 4/1/18: ARE ALL THE FIRST MOMENTS ZERO? WE SHOULD PROBABLY FIX THE LABELING...}

\subsection{Family \texorpdfstring{$\mathcal{E}(T): y^2 =  (ax^2+bx+c)(dx+e+T)$}{E(T): y2 =  (ax2+bx+c)(dx+e+T)}}
% second family

%\fix{SJM: WHAT IS THE RANK OF THIS FAMILY? NEED THE FIRST MOMENT}

\begin{proposition}
The one-parameter family
\begin{align}
\mathcal E(T): y^2 & \ = \  (ax^2+bx+c)(dx+e+T)
\end{align}
with $p>3$ prime, $a,b,c,d,e\in\Z$ and $p\nmid a,d$ has vanishing first moment, hence rank zero, and second moment given by
\begin{align}
A_{2,\mathcal E}(p) & \ = \ \begin{cases}
p^2-\left(1+\legendre{b^2-4ac}{p}\right)p - 1 &{\rm if\ } p\nmid b^2-4ac\\
p - 1 &{\rm if\ } p\mid b^2-4ac.
\end{cases}
\end{align}
\end{proposition}

\begin{proof}
We have $P(x)=ax^2+bx+c$ and $Q(x)=P(x)(dx+e)$. Substituting into \eqref{eq:expansionlinearellcurvesum},
\begin{align}
A_{2,\mathcal E}(p) & \ = \ p\left[\sum_{P(x)\equiv 0} \legendre{Q(x)}{p}\right]^2 - \left[\sum_{x\;(p)} \legendre{P(x)}{p}\right]^2 + p\sum_{\Delta(x,y)\equiv 0} \legendre{P(x)P(y)}{p} \nonumber\\
& \ = \ p\cdot0 - \left[\legendre{a}{p}\cdot\begin{cases}
-1 &{\rm if\ } p\nmid b^2-4ac\\
p-1&{\rm if\ } p\mid b^2-4ac
\end{cases}\right]^2 + p\sum_{\Delta(x,y)\equiv 0} \legendre{P(x)P(y)}{p} \nonumber\\
& \ = \  \begin{cases}
-1 &{\rm if\ } p\nmid b^2-4ac\\
-(p-1)^2&{\rm if\ } p\mid b^2-4ac
\end{cases}\bigg\} \ + \ p\sum_{\Delta(x,y)\equiv 0} \legendre{P(x)P(y)}{p}
\end{align}
since $p\nmid a$. Note that $\Delta(x,y) = \big(P(y)Q(x) - P(x)Q(y)\big)^2 \equiv 0$ if and only if $P(x)\equiv0$, $P(y)\equiv0$, or $x\equiv y$, since
\begin{align}
P(y)Q(x) - P(x)Q(y) & \ = \ (ax^2+bx+c)(ay^2+by+c)[(dx+e)-(dy+e)] \nonumber\\
& \ = \ P(x)P(y)\,d(x-y).
\end{align}
Thus the sum over $\Delta(x,y) \equiv 0$ becomes
\begin{align}
\sum_{\Delta(x,y)\equiv 0} \legendre{P(x)P(y)}{p} & = \sum_{x\equiv y}\legendre{P(x)P(y)}{p} = \sum_{x\;(p)}\legendre{P(x)}{p}^2 \nonumber\\
& = p-\#\{\alpha:P(\alpha)\equiv0\;(p)\}.
\end{align}
Then since $\#\{\alpha:P(\alpha)\equiv0\;(p)\} = 1+\legendre{b^2-4ac}{p}$, we have
\begin{align}
A_{2,\mathcal E}(p) \ & = \ \begin{cases}
-1 &{\rm if\ } p\nmid b^2-4ac\\
-(p-1)^2&{\rm if\ } p\mid b^2-4ac
\end{cases}\bigg\}  + p\left(p - \left(1+\legendre{b^2-4ac}{p}\right)\right)\nonumber\\
& = \ \begin{cases}
p^2-\left(1+\legendre{b^2-4ac}{p}\right)p - 1 &{\rm if\ } p\nmid b^2-4ac\\
p - 1 &{\rm if\ } p\mid b^2-4ac
\end{cases}
\end{align}
as claimed.
\end{proof}

%%%%%%%%%%%%%%%%%%%%%%%%%%%%%%%%%%%%%%%%%%%%%%%%%%%%%%%%%%%%%%%%%%%
%%%%%%%%%%%%%%%%%%%%%%%%%%%%%%%%%%%%%%%%%%%%%%%%%%%%%%%%%%%%%%%%%%%
%%%%%%%%%%%%%%%%%%%%%%%%%%%%%%%%%%%%%%%%%%%%%%%%%%%%%%%%%%%%%%%%%%%

\subsection{Family \texorpdfstring{$\mathcal E(T): y^2 =  x(ax^2+bx+c+dTx)$}{E(T): y2 =  x(ax2+bx+c+dTx)}}

%\fix{SJM: WHAT IS THE RANK OF THIS FAMILY? NEED THE FIRST MOMENT}

%third family
\begin{proposition}
The family
\begin{align}
\mathcal E(T): y^2 & \ = \  x(ax^2+bx+c+dTx)
\end{align}
with $p>3$ prime, $a,b,c,d\in\Z$ and $p\nmid a,d$ has vanishing first moment, hence rank zero, and second moment given by
\begin{align}
A_{2,\mathcal E}(p) \  = \ \begin{cases}
p^2 - 2 p - 1 &{\rm if\ } p\nmid c\\
p - 1 &{\rm if\ } p\mid c.
\end{cases}
\end{align}
\end{proposition}

\begin{proof}
We have $P(x)=dx^2$ and $Q(x)=x(ax^2+bx+c)$. Substituting into \eqref{eq:expansionlinearellcurvesum},

\begin{align}
A_{2,\mathcal E}(p) & \ = \ p\left[\sum_{P(x)\equiv 0} \legendre{Q(x)}{p}\right]^2 - \left[\sum_{x\;(p)} \legendre{P(x)}{p}\right]^2 + p\sum_{\Delta(x,y)\equiv 0} \legendre{P(x)P(y)}{p} \nonumber\\
& \ = \ p\cdot0 - (p-1)^2 + p\sum_{\Delta(x,y)\equiv 0} \legendre{P(x)P(y)}{p}.
\end{align}

Note that $\Delta(x,y) \equiv 0$ if and only if $x\equiv0$, $y\equiv0$, $x\equiv y$, or $axy\equiv c$ since
\begin{align}
P(y)Q(x) - P(x)Q(y) & \ = \ dxy[y(ax^2+bx+c)-x(ay^2+by+c)] \nonumber\\
& \ = \ dxy(x-y)(axy-c).
\end{align}
But $P(x)P(y) \equiv 0$ when $xy\equiv0$, so by inclusion-exclusion we have
\begin{align}
\sum_{\Delta(x,y)\equiv 0} \legendre{P(x)P(y)}{p} &
\ = \ \left(\sum_{x\equiv y \not\equiv0}+\sum_{axy\equiv c\atop x,y\not\equiv0}-\sum_{ax^2\equiv c\atop x\not\equiv0 \text{ and } x \equiv y}\right)\legendre{P(x)P(y)}{p} \nonumber\\
& \ = \ \sum_{x\not\equiv0}\left(\legendre{P(x)}{p}^2 + \legendre{P(x)P(c/ax)}{p}\right) - 2 \legendre{d(c/a)^2}{p}^2 \nonumber\\
& \ = \sum_{x\not\equiv0}\left(\legendre{dx^2}{p}^2 + \legendre{(cd/a)^2}{p}\right) - 2 \legendre{c^2}{p}  = \sum_{x\not\equiv0}\left(1+\legendre{c^2}{p}\right) - 2 \legendre{c^2}{p}\nonumber\\
& \ =\  (p - 1) \left(1+\legendre{c^2}{p}\right)\ - \ 2 \legendre{c^2}{p}
\end{align}
by the assumption $p\nmid a,d$. Hence we obtain
\begin{align}
A_{2,\mathcal E}(p) & \ = \ -p^2+2p-1 + p\sum_{\Delta(x,y)\equiv 0} \legendre{P(x)P(y)}{p}  \ = \ (p^2-3p)\legendre{c^2}{p} + p - 1.
\end{align}
This gives the result.
\end{proof}

%%%%%%%%%%%%%%%%%%%%%%%%%%%%%%%%%%%%%%%%%%%%%%%%%%%%%%%%%%%%%%%%%%%
%%%%%%%%%%%%%%%%%%%%%%%%%%%%%%%%%%%%%%%%%%%%%%%%%%%%%%%%%%%%%%%%%%%
%%%%%%%%%%%%%%%%%%%%%%%%%%%%%%%%%%%%%%%%%%%%%%%%%%%%%%%%%%%%%%%%%%%

\subsection{Family \texorpdfstring{$\mathcal E(T): y^2 = x(ax+b)(cx+d+Tx)$}{ E(T): y2 = x(ax+b)(cx+d+Tx)}}

%\fix{SJM: WHAT IS THE RANK OF THIS FAMILY? NEED THE FIRST MOMENT}

% fourth family
\begin{proposition}
The one-parameter family
\begin{align}
\mathcal E(T): y^2 & \ = \  x(ax+b)(cx+d+Tx)
\end{align}
with $p>3$ prime, $a,b,c,d\in\Z$ and $p\nmid a,d$ has vanishing first moment, hence rank zero, and second moment given by
\begin{align}
A_{2,\mathcal E}(p) \  = \ \begin{cases}
p^2-2p - 1 &{\rm if\ } p\nmid b\\
p^2 - p &{\rm if\ } p\mid b.
\end{cases}
\end{align}
\end{proposition}

\begin{proof}
We have $P(x)=x^2(ax+b)$ and $Q(x)=x(ax+b)(cx+d)$. Noting that $P(x)\equiv 0$ implies $Q(x)\equiv 0$, and $\sum_{x\;(p)} \legendre{P(x)}{p} = \sum_{x\not\equiv0} \legendre{ax+b}{p}=-\legendre{b}{p}$ by Lemma \ref{lem:sumlegendrelinquad}. Then substituting into \eqref{eq:expansionlinearellcurvesum}, we have
\begin{align}
A_{2,\mathcal E}(p) & \ = \ p\left[\sum_{P(x)\equiv 0} \legendre{Q(x)}{p}\right]^2 - \left[\sum_{x\;(p)} \legendre{P(x)}{p}\right]^2 + p\sum_{\Delta(x,y)\equiv 0} \legendre{P(x)P(y)}{p} \nonumber\\
& \ =  p\cdot0 \ -\legendre{b}{p}^2 \ + \
p\sum_{\Delta(x,y)\equiv 0} \legendre{P(x)P(y)}{p}.
\end{align}

Note that $\Delta(x,y) \equiv 0$ if and only if $x\equiv0$, $y\equiv0$, $ax\equiv -b$, $ay\equiv -b$, or $x\equiv y$ since
\begin{align}
P(y)Q(x) - P(x)Q(y)  \ & = \ xy(ax+b)(ay+b)\left(y(cx+d)-x(cy+d)\right) \nonumber\\
& \ = \ xy(ax+b)(ay+b)d(y-x).
\end{align}
All cases except $x\equiv y$ imply $P(x)P(y)\equiv0$, so
\begin{align}
\sum_{\Delta(x,y)\equiv 0} \legendre{P(x)P(y)}{p} & \ = \ \sum_{x\equiv y} \legendre{P(x)P(y)}{p} \ = \ \sum_{x\;(p)} \legendre{P(x)}{p}^2 \ = \ \sum_{x\not\equiv0} \legendre{ax+b}{p}^2 \nonumber\\
& = \ \sum_{x\;(p)} \legendre{ax+b}{p}^2 - \legendre{b}{p}^2
 \ = \ p-1 - \legendre{b^2}{p},
\end{align}
recalling $p\nmid a$. Hence plugging back in, we obtain
\begin{align}
A_{2,\mathcal E}(p) & \ = p\sum_{\Delta(x,y)\equiv 0} \legendre{P(x)P(y)}{p}\ - \ \legendre{b^2}{p}  \ = \ (p^2-p) \ - \ (p+1)\legendre{b^2}{p}.
\end{align}
This gives the result.
\end{proof}

%%%%%%%%%%%%%%%%%%%%%%%%%%%%%%%%%%%%%%%%%%%%%%%%%%%%%%%%%%%%%%%%%%%%%%%%%%%%%%%%%%%%%%%%%%%%%%%%%%%%
%%%%%%%%%%%%%%%%%%%%%%%%%%%%%%%%%%%%%%%%%%%%%%%%%%%%%%%%%%%%%%%%%%%%%%%%%%%%%%%%%%%%%%%%%%%%%%%%%%%%
%%%%%%%%%%%%%%%%%%%%%%%%%%%%%%%%%%%%%%%%%%%%%%%%%%%%%%%%%%%%%%%%%%%%%%%%%%%%%%%%%%%%%%%%%%%%%%%%%%%%
\section{Additional Elliptic Curve Families}\label{sec:michellewu}

\noindent By Steven J. Miller and Jiefei Wu \\ \

Miller and Wu \cite{Wu} investigated additional one-parameter, and some two-parameter, families of elliptic curves. We briefly summarize a representative sample of their results. While in Table \ref{table:michelleone} most of the families were carefully chosen so that the corresponding Legendre sums could be evaluated exactly, the last two were deliberately chosen differently in order to provide a better test of the Bias Conjecture. Note the normalization in \cite{Wu} includes dividing the first and second moments by $p$ to be a true average, and we set $\delta_1(p)$ to be $1$ if $p \equiv 1 \bmod 4$ and 0 otherwise.

\begin{table}[htb]
\resizebox{\textwidth}{!}{%
\begin{tabular}{|l|l|l|l|}
\hline
One-Parameter Family     & Rank & $pA_{1,\mathcal{F}(p)}$ & $pA_{2,\mathcal{F}(p)}$                             \\ \hline
$y^2 = x^3-x^2-x+t$      & 0    & 0                      & $p^2-2p-\js{-3}p$                                  \\ \hline
$y^2=x^3-tx^2+(x-1)t^2$  & 0    & 0                     & $p^2-2p-[\sum_{x(p)}\js{x^3-x^2+x}]^2-\js{-3}p$ \\ \hline
$y^2=x^3+tx^2+t^2$       & 1    & $-p$                      & $p^2-2p-\js{-3}p-1$        \\ \hline
$y^2=x^3+tx^2+x+1$       & 1    & $-p$                      & $p^2-p-1 + p\sum_{x(p)}\js{4x^3+x^2+2x+1}$         \\ \hline
$y^2=x^3+tx^2+tx+t^2$    & 1    & $-p$                      & $p^2-p-1-\delta_1(p) \cdot 2p$                                             \\ \hline
\end{tabular}%
}
\caption{\label{table:michelleone} First and second moments of some families from \cite{Wu}.}
\end{table}

These families provide additional support for the Bias Conjecture. It is worth remembering that these families were all carefully chosen so that the resulting sums could be computed exactly; as this is not the case in general, it is quite likely that the behavior here is not representative.

As a further test, two-parameter families were studied. Unfortunately the Rosen-Silverman theorem is not available to determine the rank, and we just record the values of the first moments as for our purposes we do not need to compute the ranks (though see \cite{Waz} for results on ranks). We report the results from \cite{Wu} in Table \ref{table:michelletwo}, with $\delta_1(p)$ as before (it is $1$ if $p \equiv 1 \bmod 4$ and 0 otherwise) and $\delta_3(p) = 1$ if $p \equiv 3 \bmod 4$ and $0$ otherwise.

\begin{table}[htb]
\resizebox{\textwidth}{!}{%
\begin{tabular}{|l|l|l|l|}
\hline
Two-Parameter Family    & $p^2A_{1,\mathcal{F}(p)}$ & $p^2A_{2,\mathcal{F}(p)}$                               \\ \hline
$y^2=x^3+tx+sx^2$        & 0                      & $p^3-2p^2+p$                                                \\ \hline
$y^2=x^3+t^2x+st^4$         & 0                      & $p^3-2p^2+p-2(p^2-p)\js{-3}$ \\ \hline
$y^2=x^3+sx^2-t^2x$          & 0                      & $p^3 - p^2 - \delta_3(p) (2p^2 - 2p)$ \\ \hline
$y^2=x^3+ts^2x^2+(t^3-t^2)x$       & $-p^2$                     &
    $p^3-3p^2+3p-1-\delta_3(p) (2p-2)$                               \\
\hline
$y^2=x^3+t^2x^2+(t^3-t^2)sx$        & $-p^2$                    & $p^3 - 3p^2 + 3p - \delta_3(p) (2p^2 -4p)$                     \\ \hline
$y^2=x^3+t^2x^2-(s^2-s)t^2x$        & $-2p^2$                     & $p^3 - 3p^2 +2p + \delta_1(p) (p-\sum_{s(p)}\sum_{x,y(p)}\js{x^3-(s^2-s)x}\js{y^3-(s^2-s)y})$\\ \hline
$y^2=x^3-t^2x+t^3s^2+t^4$        & $-2p^2$                     & $p^3-2p^2+p-\left[\js{-3}+\js{3}\right]p^2$                      \\
\hline
\end{tabular}%
}
\caption{\label{table:michelletwo} First and second moments from some two-parameter families from \cite{Wu}. }
\end{table}

We thus again see a negative bias in the largest lower order term in the second-moment expansion. This should be understood in the sense of a normalized average over primes $p$, e.g. as in the introduction and elsewhere in the paper. For example, consider the family $y^2=x^3-t^2x+t^3s^2+t^4$ appearing in the last row of Table \ref{table:michelletwo}. The largest lower order term of the second moment $p^2A_{2,\mathcal{F}(p)}$ is on average negative in the sense that
    \[
    \lim_{X \rightarrow \infty} \frac{1}{\pi(X)} \sum_{p \leq X} \frac{p^2A_{2,\mathcal{F}(p)} - p^3}{p^2} = -2 < 0.
    \]
To make this calculation, compute that for $p\geq 5$ we have
    \[
    \js{-3}+\js{3} = \begin{cases} 2 \quad \quad \text{if $p \equiv 1\pmod{12}$} \\ 0 \quad  \quad \text{if $p \equiv 7, 11\pmod{12}$} \\ -2 \quad \text{ if $p \equiv 5 \pmod{12}$} \end{cases}
    \]
and moreover note that $p$ equidistributes among the congruence classes $1,5,7,11$ modulo $12$ as $X \rightarrow \infty$. When $p \equiv 5 \pmod{12}$ we have $p^2A_{2,\mathcal{F}(p)} = p^3+p$ and the largest lower order term $+p$ is positive. However, such terms do not contribute after dividing by $p^2$ and averaging over $p$. Instead, the lower order terms of size $p^2$ coming from the cases $p \equiv 1,7,11 \pmod{12}$ dictate the average. A similar interpretation should be applied to the conclusion of Lemma \ref{lemma:twoParameterBias} below.

We now give the details for a representative calculation, that of the family $y^2=x^3+t^2x+st^4$. Note that we must adjust our definitions of $A_{r,\mathcal{F}}$ and divide by $p^2$ and not $p$, as we have sums over $t$ and $s$ modulo $p$.

\begin{lemma}
For $p \geq 5$, the first moment of the two-parameter family $y^2=x^3+t^2x+st^4$ is $0$.
\end{lemma}

\begin{proof} The claim follows from straightforward algebra; briefly we note there is no contribution from $t=0$, and then we can send $s$ to $t^{-1}s \bmod p$, and the resulting $s$ sum is complete and thus zero. Explicitly,
\begin{eqnarray}
    -p^2A_{1,\mathcal{F}(p)} & \ = \ & -\sum_{t(p)}\sum_{s(p)}a_{t,s}(p) =\sum_{t(p)}\sum_{x(p)}\sum_{s(p)}\js{x^3+t^2x+st^4} \nonumber\\
    & = & \sum_{t=1}^{p-1}\sum_{x(p)}\sum_{s(p)}\js{t^3x^3+t^3x+st^4}\nonumber\\
    & = & \sum_{t=1}^{p-1}\sum_{x(p)}\sum_{s(p)}\js{t^3}\js{x^3+x+st}\nonumber\\
    & = & \sum_{x(p)}\sum_{s(p)}\sum_{t=1}^{p-1}\js{t}\js{st+(x^3+x)}\nonumber\\
    & = & \sum_{x(p)}\sum_{s(p)}\sum_{t=1}^{p-1}\js{t}\js{t^{-1}st+(x^3+x)}\nonumber\\
    & = & \sum_{x(p)}\sum_{s(p)}\sum_{t=1}^{p-1}\js{t}\js{s+(x^3+x)}.
\end{eqnarray}
\end{proof}

\begin{lemma}\label{lemma:twoParameterBias} For $p \ge 5$ the second moment times $p^2$ of the two-parameter family $y^2=x^3+t^2x+st^4$ is $p^3-2p^2+p-2(p^2-p)\js{-3}$, which supports the Bias Conjecture.
\end{lemma}

\begin{proof} We see there is no contribution from $t=0$, we then send $x$ and $y$ to $tx$ and $ty$ modulo $p$ pull out a $t^6$ from the Legendre sums, and then send $s$ to $t^{-1}s \bmod p$ to simplify further:
   \begin{eqnarray}
    p^2A_{2,\mathcal{F}(p)} & \ = \ &  \sum_{t,s(p)}{a_{t,s}}^2(p) \ = \   \sum_{t(p)}\sum_{s(p)}\sum_{x,y(p)}\js{x^3+t^2x+st^4}\js{y^3+t^2y+st^4} \nonumber\\
    & \ =\ & \sum_{t=1}^{p-1}\sum_{s(p)}\sum_{x,y(p)}\js{t^3x^3+t^3x+st^4}\js{t^3y^3+t^3y+st^4} \nonumber\\
    & \ = \ &   \sum_{t=1}^{p-1}\sum_{s(p)}\sum_{x,y(p)}\js{t^6}\js{x^3+x+st}\js{y^3+y+st}\nonumber\\
    & \ = \ &  \sum_{t=1}^{p-1}\sum_{s(p)}\sum_{x,y(p)} \js{s+(x^3+x)}\js{s+(y^3+y)} \nonumber\\
    & \ = \ & (p-1) \sum_{x,y(p)} \sum_{s(p)}  \js{s+(x^3+x)}\js{s+(y^3+y)}.
%    & \ = \ &   \sum_{t=0}^{p-1}\sum_{s(p)}\sum_{x,y(p)}\js{x^3+x+st}\js{y^3+y+st}-\sum_{s(p)}\sum_{x,y(p)}\js{x^3+x}\js{y^3+y} \nonumber\\
%    & \ = \ &   \sum_{x,y(p)}\sum_{s(p)}\sum_{t(p)}\js{st+(x^3+x)}\js{st+(y^3+y)}-p\left[\sum_{x(p)}\js{x^3+x}\right]^2 \nonumber\\
%    & \ = \ &   \sum_{s=0}\sum_{t(p)}\left[\sum_{x(p)}\js{x^3+x}\right]^2+\sum_{x,y(p)}\sum_{s\not=0}\sum_{t(p)}\js{st+(x^3+x)}\js{st+(y^3+y)} \nonumber\\
%    & & \ \ \ \-p\left[\sum_{x(p)}\js{x^3+x}\right]^2\nonumber\\
%    & \ = \ &   \sum_{x,y(p)}\sum_{s\not=0}\sum_{t(p)}\js{st+(x^3+x)}\js{st+(y^3+y)}\nonumber\\
%    & \ = \ &   \sum_{x,y(p)}\sum_{s\not=0}\sum_{t(p)}\js{ss^{-1}t+(x^3+x)}\js{ss^{-1}t+(y^3+y)}\nonumber\\
%    & \ = \ &   \sum_{x,y(p)}\sum_{s\not=0}\sum_{t(p)}\js{t+(x^3+x)}\js{t+(y^3+y)} \nonumber\\
%    & \ = \ &  (p-1)\sum_{x,y(p)}\sum_{t(p)}\js{t+(x^3+x)}\js{t+(y^3+y)},
\end{eqnarray}
%where in passing from the second to the third line we sent $x$ and $y$ modulo $p$ to $tx$ and $ty$, which is valid so long as $t$ is not zero; to keep the sum over all $t$ we %need to subtract the $t=0$ contribution. We can also see that when $s=0$, since the $t$-sum is $p$ and there is no $t$ dependence, the contribution from $s=0$ and $t=0$ cancel %out each other. Note that now as $s$ is non-zero, we can send $t$ to $s^{-1} t$, and we get a nice quadratic sum in $t$.

To evaluate this sum, consider the quadratic polynomial in $s$ given by $(s+(x^3+x))(s+(y^3+y))$, and write $\delta(x,y)$ for the discriminant of this quadratic. The second part of Lemma \ref{lem:sumlegendrelinquad} gives
    \[
    \sum_{s(p)}  \js{s+(x^3+x)}\js{s+(y^3+y)} \ = \  \begin{cases} -1 & \mbox{{\rm if} } p \nmid \delta(x,y) \\ (p - 1) & \mbox{{\rm if} } p \mid \delta(x,y). \end{cases}
    \]
%The $s$-sum contributes $p-1$ if $p|\delta(x,y)$ and $-1$ otherwise; thus we get a contribution of $-p^2$ from all the $(x,y)$ pairs, and an additional contribution of $p$ for each pair $(x,y)$ such that $p|\delta(x,y)$.
This gives
\begin{align}
    p^2A_{2,\mathcal{F}(p)} \ = \  (p-1)\left[p\left(\sum_{x,y \bmod p \atop \delta(x,y)\equiv 0(p)} 1 \right) \ - \ p^2\right].
\end{align}
We have thus reduced the problem to determining how often $p$ divides \be \delta(x,y) \ = \ [(x-y)(y^2+xy+(1+x^2))]^2. \ee Set $\delta_1(x,y) = x-y$ and $\delta_2(x,y) = y^2+xy+(1+x^2)$, so that $\delta(x,y) = \delta_1(x,y)^2\delta_2(x,y)^2$. For each $\delta_i$, we count the number of pairs $(x,y)$ such that $\delta_i(x,y)$ is zero modulo $p$, and then subtract the doubly counted pairs where both factors vanish modulo $p$.\\ \

Case 1: We have $\delta_1(x,y)=x-y\equiv 0 \bmod p$ when $x=y$; thus there are $p$ such pairs. \\ \

Case 2: We need to count the number of solutions of $\delta_2(x,y)=y^2+xy+(1+x^2) \equiv 0 \bmod p$. By the Quadratic Formula modulo $p$, we have
\begin{eqnarray}
    y \ = \ \frac{-x \pm \sqrt{-3x^2-4}}{2}, \nonumber
\end{eqnarray} and thus there are two solutions if $-3x^2-4$ is a non-zero square, one solution if it is zero modulo $p$, and otherwise no solutions.\footnote{This is why we assumed $p > 2$, so 2 is invertible modulo $p$.} Thus, summing over $x$ yields the number of pairs is
\begin{eqnarray}
\sum_{x(p)}\left[1+\js{-3x^2-4}\right] \ = \ p + \sum_{x(p)} \js{-3x^2-4} \ =  \ p-\js{-3},
\end{eqnarray} where we used Lemma \ref{lem:sumlegendrelinquad} to evaluate the quadratic sum (the discriminant is -48, which is invertible modulo $p$ as we assumed $p \ge 5$).\\ \

Case 3: The double-counted pairs satisfy both $x=y$ and $y^2+xy+(1+x^2)\equiv 0 \bmod p$, which means that they satisfy $3x^2+1\equiv0 \bmod p$, or $-3x^2\equiv 1 \bmod p$. Thus, there is a double-counted solution if and only if $\js{-3}=1$, and the number of double-counted pairs is $1+\js{-3}$.

\bigskip

Therefore, the total number of pairs for $\delta(x,y)\equiv0 \bmod p$ is \be p + \left[p-\js{-3}\right] - \left[1 + \js{-3}\right] \ = \  2p-1-2\js{-3}. \ee
%\begin{eqnarray}
%   \sum_{\delta_1(x,y)\equiv 0 \bmod p} 1 \ +\ \sum_{\delta_2(x,y)\equiv 0 \bmod p} 1 \ - \ \sum_{\delta_1(x,y)\equiv 0 \bmod p \atop \delta_2(x,y)\equiv0 \bmod p} 1 & \ = \ & %p+p-\js{-3}-1-\js{-3} \nonumber\\
%   & = & 2p-1-2\js{-3}.
%\end{eqnarray}
Hence, the second moment times $p^2$ of the family equals
\begin{eqnarray}
    p^2A_{2,\mathcal{F}(p)}
    &\ =\ & (p-1)\left[p\left(2p-1-2\js{-3}\right) - p^2\right] \nonumber\\
    %& = & p(p-1)\left(p-1-2\js{-3}\right) \nonumber\\
    & = & p^3-2p^2+p-2(p^2-p)\js{-3}.
\end{eqnarray}
\end{proof}

%%%%%%%%%%%%%%%%%%%%%%%%%%%%%%%%%%%%%%%%%%%%%%%%%%%%%%%%%%%%%%%%%%%%%%%%%%%%%%%%%%%%%%%%%%%%%%%%%%%%
%%%%%%%%%%%%%%%%%%%%%%%%%%%%%%%%%%%%%%%%%%%%%%%%%%%%%%%%%%%%%%%%%%%%%%%%%%%%%%%%%%%%%%%%%%%%%%%%%%%%
%%%%%%%%%%%%%%%%%%%%%%%%%%%%%%%%%%%%%%%%%%%%%%%%%%%%%%%%%%%%%%%%%%%%%%%%%%%%%%%%%%%%%%%%%%%%%%%%%%%%
\section{Biases in Higher Moments}\label{sec:rogerwang}

\noindent By Steven J. Miller and Yan Weng \\ \

We end with some additional results and directions for future research.

Miller and Weng recently pushed these investigations in two new directions; below we briefly summarize their findings, and refer the reader to \cite{MWe, MWeWu} for details. First, they studied a large number of families of elliptic curves where it was not clear if there was a closed form polynomial expansion for the second moment (previous families to date were chosen because the resulting sums could be done in closed form, and thus these may not be indicative of the behavior of a generic family). By looking at the values for primes in different congruence classes, they were able to conjecture formulas for second moments and in some cases prove them, but for a typical family they found that the formula cannot be a polynomial in $p \bmod 2^a 3^b 5^c$ (unless $a, b$ or $c$ are quite large!). This is not  surprising, as there are a few families where the second moments can be explicitly computed and are not polynomials.

Next, they looked at families with moderate rank to see if the bias is still present; all the families that can be analyzed in closed form have small rank and polynomials in the defining equations of low degree. While such families are computationally difficult to deal with for many quantities (for example, the analytic conductors grow very rapidly, making it hard to compute more than a few low-lying zeros), the bias calculations are not significantly harder than what has been done before. The reason is that the Legendre sums only involve the polynomials modulo the prime, and thus the large coefficients are immediately reduced. The difficulty is that in these families it seems that the $p^{3/2}$ term is frequently present, making it hard to see a bias in terms of size $p$. The data is not inconsistent with a \emph{positive} bias for these higher rank families, indicating further study is warranted.

Their final experiment was to examine if there is a bias in the higher moments. Using the theory of modular forms, one can write down formulas involving averages of powers of the Satake parameters, but unfortunately these cannot be evaluated in closed form. Computationally, it is very difficult to see numerical biases for these moments as the conjecture states that the first lower order term that does not average to zero has a negative average; if there is a larger lower order term that does average to zero, it will drown out this bias and make it impossible to detect numerically.

Below is a summary of their main results.

\begin{itemize}

%%%\item The rank $0$ and rank $1$ families have negative biases in their second moment sums for all the families studied.

%%\item For $p$ times the fourth moment, whose main term is of size $p^3$, we also believe that the rank $0$ and rank $1$ families have negative biases. We see this in some families; in others for each prime $p$ we see the presence of terms of size $p^{5/2}$ whose behavior is consistent with their averaging to zero. Thus the data at least suggest a weaker version of the bias conjecture: the first lower order term does not have a positive bias.

%%\item The sixth moment results are similar to the fourth moment. The results are consistent with either a negative bias, or a highest lower order term (now of size $p^{7/2}$) averaging to zero. Thus the data at least suggest a weaker version of the bias conjecture: the first lower order term does not have a positive bias.

\item All the rank $0$ and rank $1$ families studied have data consistent with a negative bias in their second moment sums. However, for higher rank families (rank at least 4) the data suggests that there is instead a positive bias.

\item For the fourth moment, we also believe that the rank $0$ and rank $1$ families have negative biases. We see this in some families; in others we
see the presence of terms of size $p^{5/2}$ whose behavior is consistent with their averaging to zero, but its presence makes it impossible to detect the lower order terms. Interestingly, again for higher rank families (rank at least 4) the data is consistent with a positive bias.

\item The sixth moment results are similar to the fourth moment. The results are consistent with either a negative bias, or a leading term (now of
size $p^{7/2}$) averaging to zero for lower rank families. Our data also suggests that higher rank families have positive biases.

\item For the odd moments, the coefficients of the leading term vary with the primes. Our data suggests that the average value of the main term for
the $(2k+1)$\textsuperscript{st} moment is $-C_{k+1}rp^{k+1}$, where $C_n = \frac1{n+1}\ncr{2n}{n}$ is the $n$\textsuperscript{th} Catalan number.
\end{itemize}

%%%%%%%%%%%%%%%%%%%%%%%%%%%%%%%%%%%%%%%%%%%%%%%%%%%%%%%%%%%%%%%%%%%%%%%%%%%%%%%%%%%%%%%%%%%%%%%%%%%%
%%%%%%%%%%%%%%%%%%%%%%%%%%%%%%%%%%%%%%%%%%%%%%%%%%%%%%%%%%%%%%%%%%%%%%%%%%%%%%%%%%%%%%%%%%%%%%%%%%%%
%%%%%%%%%%%%%%%%%%%%%%%%%%%%%%%%%%%%%%%%%%%%%%%%%%%%%%%%%%%%%%%%%%%%%%%%%%%%%%%%%%%%%%%%%%%%%%%%%%%%

\ \\

\end{document}